\numberwithin{equation}{section}
\theoremstyle{plain}
\newtheorem{theorem}{Theorem}[section]
\newtheorem{lemma}[theorem]{Lemma}
\newtheorem{proposition}[theorem]{Proposition}
\newtheorem{corollary}[theorem]{Corollary}
\theoremstyle{definition}
\newtheorem{definition}[theorem]{Definition}
\newtheorem{remark}[theorem]{Remark}
\newcommand{\N}{\mathbb{N}}
\newcommand{\R}{\mathbb{R}}
\newcommand{\eps}{\varepsilon}
\renewcommand{\phi}{\varphi}
\renewcommand{\rho}{\varrho}
\renewcommand{\theta}{\vartheta}
\DeclareMathOperator*{\distance}{dist}
\DeclareMathOperator{\Lip}{Lip}
\DeclarePairedDelimiter{\set}{\{}{\}}
\DeclarePairedDelimiter{\abs}{|}{|}
\DeclarePairedDelimiter{\norm}{\|}{\|}
\newcommand{\dimension}{N}
\newcommand{\measurableSetsDim}{\mathcal{M}_{\dimension}}
\newcommand{\volume}{m}
\newcommand{\volumezero}{\volume_{0}}
\newcommand{\Rdim}{\R^{\dimension}}
\newcommand{\de}{\mathrm{d}}
\newcommand{\integralde}{\thinspace\mathrm{d}}
\newcommand{\integral}[1]{\int_{#1}}
\newcommand{\BallRadiusCenter}[2]{B_{#1}({#2})}
\newcommand{\BallVolumeCenter}[2]{B^{#1}({#2})}
\newcommand{\BallRadius}[1]{B_{#1}}
\newcommand{\BallVolume}[1]{B^{#1}}
\DeclareMathOperator{\leb}{\mathcal{L}}
\newcommand{\lebdim}{\leb^{\dimension}}
\newcommand{\omegadim}{\omega_{\dimension}}
\DeclareMathOperator{\diam}{diam}
\newcommand{\Ps}{P_{s}}
\newcommand{\Etilde}{\widetilde{E}}
\newcommand{\EucEnergyParDat}[2]{\mathcal{E}_{#1,#2}} 
\newcommand{\EucEnergyParDatVolume}[3]{\mathcal{E}_{#1,#2}^{#3}}
\newcommand{\Evol}{E_{\volume}}
\newcommand{\indice}{i}
\newcommand{\indiceSet}{\mathcal{I}_{\volume}}
\newcommand{\Evolindice}{E_{\volume}^{\indice}}
\newcommand{\FamilyMinimizersVol}{\mathcal{S}_{\volume}}
\newcommand{\potential}{g}
\newcommand{\volScaledPotential}{{\potential}_{\volume}} %new notation :since s is fixed in the work, we omit its dependence
\newcommand{\EucEnergysPotential}{\EucEnergyParDat{s}{\potential}}
\newcommand{\EucEnergysScaledPotentialVolume}[1]{\EucEnergyParDatVolume{s}{\volScaledPotential}{#1}}
\newcommand{\EtildeVol}{\Etilde_{\volume}}
\newcommand{\EucEnergysVolScaledPotential}{\EucEnergyParDat{s}{\volScaledPotential}}
\newcommand{\Bone}{\BallRadius{1}}
\newcommand{\xtilde}{\tilde{x}}
\DeclareMathOperator*{\argmin}{arg\,min}
\DeclareMathOperator*{\FrkAsym}{A}
\DeclareMathOperator*{\WulDefs}{\delta_s}
\newcommand{\xvolume}{x_{\volume}}
\newcommand{\xvolumeindice}{x_{\volume}^{\indice}}
\DeclarePairedDelimiter{\tonde}{(}{)}
\DeclarePairedDelimiter{\quadre}{[}{]}
\DeclarePairedDelimiter{\graffe}{\{}{\}}
\newcommand{\sequence}[2]{\graffe{#1}_{#2}}
\newcommand{\Lspace}[1]{L^{#1}}
\newcommand{\Rzero}{R_{0}}
\newcommand{\length}{\sigma(\volume)}
\newcommand{\lengthzero}{\sigma(\volumezero)}
\newcommand{\comp}[1]{{#1}^c} %\complement of a set
\newcommand{\HdimMinusOne}{\mathcal{H}^{\dimension-1}}
\newcommand*{\MeanCurvature}[1]{H_{#1}}
\newcommand*{\sMeanCurvature}[1]{H_{s,\partial#1}}
\newcommand*{\rzero}{r_{0}}
\newcommand*{\BFVker}[1]{L_{#1}}
\newcommand*{\Lr}{\BFVker{r}}
\newcommand*{\symmDiff}{\bigtriangleup}
\newcommand*{\penal}{\mu}
\newcommand*{\SetSep}{:}
\newcommand{\dist}{\mathrm{dist}\,}
\newcommand{\capE}{\mathcal{E}}
\newcommand{\spt}{\mathrm{spt}\, }
\begin{document}
	
	\title{On the Shape of Small Liquid Drops Minimizing Nonlocal Energies}
	
	\author[K.~Bessas]{Konstantinos Bessas}
	\address[K.~Bessas]{Dipartimento di Matematica, Università di Pavia, Via Adolfo Ferrata 5, 27100 Pavia, Italy}
	\email{konstantinos.bessas@unipv.it}
	
	\author[M.~Novaga]{Matteo Novaga}
	\address[M.~Novaga]{Dipartimento di Matematica, Università di Pisa, Largo Bruno Pontecorvo 5, 56127 Pisa, Italy}
	\email{matteo.novaga@unipi.it}
	
	\author[F.~Onoue]{Fumihiko Onoue}
	\address[F.~Onoue]{Zentrum Mathematik, Technische Universit\"at M\"unchen, Boltzmannstrasse 3, 85748 Garching, Germany}
	\email{fumihiko.onoue@tum.de}

	\date{\today}
	
	\keywords{Fractional perimeter, isoperimetric problem, regularity of minimizers, liquid drop model.}
	
	\subjclass[2020]{49Q20, 53A10, 35R09, 35R11.}
	
	\thanks{\textit{Acknowledgements}. KB and MN are members of INdAM-GNAMPA. The work of KB is partially supported by the INdAM--GNAMPA 2022 Project \textit{Fenomeni non locali in problemi locali}, codice CUP\_E55\-F22\-00\-02\-70\-001. The work of FO was supported by the DFG Collaborative Research Center TRR 109, ``Discretization in Geometry and Dynamics''.
	}
	
	\begin{abstract}
		We study the equilibrium shape of liquid drops minimizing the fractional perimeter under the action of a potential energy. We prove, with a quantitative estimate, that the small volume minimizers are convex and uniformly close to a ball. 
%		The small volume regime corresponds to the small contribution from the potential term compared to the perimeter term. This result can be regarded as a nonlocal extension of some of the classical results shown by Figalli and Maggi in \cite{FigMag11}. 
	\end{abstract}
	
	\maketitle
	
	\tableofcontents
	
\section{Introduction}\label{sec:INTRO}
Given $s \in (0,1)$ and $\volume>0$, we study the following minimization problem:
\begin{equation}\label{MinimizationNonlocalProblem}
	\min\left\{ \EucEnergysPotential(E) \coloneqq \Ps(E) + \int_{E}\potential(x)\integralde x \SetSep |E| = \volume \right\},
\end{equation}
where $\Ps(E)$ is the $s$-fractional perimeter of a measurable set $E \subset \Rdim$ defined as
\begin{equation}
	\Ps(E) \coloneqq \int_{E}\int_{E^c} \frac{1}{|x-y|^{\dimension+s}}\integralde x\,\de y,
\end{equation}
and $\int_{E}g(x)\,dx$ is the potential energy of $E$ associated with a locally Lipschitz and coercive function $\potential : \Rdim \mapsto \R$. The notion of  $s$-fractional perimeter was introduced by Caffarelli, Roquejoffre, and Savin in \cite{CRS10}, who were motivated by the classical phase field model when long-range correlations exist. After their celebrated work, variational problems involving the $s$-fractional perimeter have been studied by many authors as an analogy of the classical ones and our paper is in this line of research.

Our research is motivated by the classical minimization problem for the equilibrium shape of liquid drops and crystals. The classical problem can be formulated as follows: 
\begin{equation}\label{MinimizationLocalProblem}
	\min\left\{ \capE(E) \coloneqq \int_{\partial^* E}f(\nu_E) \, d\mathcal{H}^{N-1} + \int_{E}\potential(x)\integralde x \SetSep |E| = \volume \right\},
\end{equation}
where $f : \mathbb{R}^N \to [0,\,\infty)$ is a convex, positively 1-homogeneous function, $\nu_E$ is the measure-theoretic unit normal to $E$, and $\partial^* E$ is the reduced boundary (see the definition of $\nu_E$ and $\partial^* E$ in \cite{Maggi12}). Notice that, if $f$ is the Euclidean norm, then the first term in $\capE$ is exactly the classical perimeter of a set $E \subset \Rdim$ in the sense of De Giorgi. 

Almgren originally proposed the following question on the shape of minimizers for Problem \eqref{MinimizationLocalProblem}. This question is mentioned in \cite{McCane98}. 
%(another question raised by Morgan is also mentioned in \cite{FigMag11}).
\begin{description}
	\item[Question] If the potential $\potential$ is convex, then is it true that the minimizer for Problem \eqref{MinimizationLocalProblem} is convex (at least connected) and unique? 
\end{description}
The geometric properties of minimizers for liquid drop and crystal models associated with Problem \eqref{MinimizationLocalProblem} have been studied for decades by many authors, for instance, Almgren, Avron, Baer, De Philippis, Figalli, Finn, Goldman, Gonzalez, Maggi, Massari, McCann, Tamanini, Taylor, Zia, and so on \cite{Finn80, GMT80, ATZ83, Finn86, FigMag11, Baer15, DePGol18}. Partial answers to Question are given by some of these authors.

In particular, Figalli and Maggi in \cite{FigMag11} extensively studied the shape of minimizers for Problem \eqref{MinimizationLocalProblem} in the situation where the contribution coming from the potential term $E \mapsto \int_{E}\potential(x)\integralde x$ can be negligible compared with the perimeter term associated with $f$. They proved that the minimizers of $\capE$ are uniformly close to a Wulff shape when the volume is sufficiently small. In addition, if $\dimension=2$, they showed that the minimizers are convex and, if $\dimension \geq 3$, assuming a stronger regularity on the 1-homogeneous function $f$ and the potential $g$ of the energy $\capE$, they also proved the convexity of minimizers with a quantitative estimate for the second fundamental form on the boundaries of minimizers. We remark that De Philippis and Goldman in \cite{DePGol18} proved that, if the function $f$ is smooth and uniformly elliptic, and the potential $g$ is convex and coercive, then any minimizer for Problem \eqref{MinimizationLocalProblem} without volume constraint is convex in any dimension. Moreover, under the same assumptions on $f$ and $g$ as above, they showed that, in dimension 2, any minimizer for Problem \eqref{MinimizationLocalProblem} (with volume constraint in this case) is convex and unique. On the other hand, Indrei in \cite{Indrei20} proved the non-existence of minimizers for Problem \eqref{MinimizationLocalProblem} in dimension 2 if the potential $g$ is merely convex and satisfies $g(0)=0$. Moreover, Indrei and Karakhanyan in \cite{InKa23} recently showed the sharp quantitative stability for Problem \eqref{MinimizationLocalProblem} if the function $f=1$ on $\mathbb{S}^{N-1}$ and the potential $g$ is radial and non-decreasing and $g(0)=0$, which implies that the unique minimizer for Problem \eqref{MinimizationLocalProblem} is a ball.

To the best of our knowledge, the shape of minimizers for the nonlocal version of Problem \eqref{MinimizationLocalProblem}, that is, Problem \eqref{MinimizationNonlocalProblem} is not well-understood and there are few references on the problem. For instance, Cesaroni and Novaga in \cite{CesNov17} proved the existence and regularity of minimizers for Problem \eqref{MinimizationNonlocalProblem} in the case that the potential function is either coercive or $\mathbb{Z}^N$-periodic. Our goal in this paper is to study the shape of minimizers for Problem \eqref{MinimizationNonlocalProblem} and to establish a nonlocal version of some of the results shown in \cite{FigMag11}. Precisely, we prove
\begin{theorem}\label{theoremMainShapeMinimizers}
	Let $s \in (0,\,1)$. Assume that the potential $\potential$ satisfies \ref{H:coercive} and \ref{H:locallyLipschitz} (see Section \ref{sec:NotationEtAl} for the assumptions). Then, there exist constants $\volumezero=\volumezero(\dimension,s,\potential)>0$ and $\Rzero=\Rzero(\dimension,\potential)>0$ such that, for any $\volume \in (0,\,\volumezero)$, every minimizer $\Evol$ of $\EucEnergysPotential$ with $|\Evol| = \volume$ satisfies the following: there exists a point $\xvolume \in \Rdim$ such that $|\xvolume| \leq \Rzero$, $\lim\limits_{\volume\to 0}\distance(\xvolume,\graffe{\potential=0})=0$ and  

	\begin{equation*}
		x_m + \BallRadius{\sigma(m)(1-\rzero)} \subset E_m \subset x_m + \BallRadius{\sigma(m)(1+\rzero)},
	\end{equation*}
	for some $r_0 \in (0,\,C\volume^{s^2/(2\dimension^2)})$ and some constant $C>0$ depending only on $\dimension$, $s$, and $\potential$, where we set

	\begin{equation}\label{eq:defOfLength}
		\length\coloneqq\tonde*{\frac{\volume}{|\BallRadius{1}|}}^{\frac{1}{\dimension}}.
	\end{equation}
	
    Moreover, we have that $\partial \Evol$ is of class $C^{2,\alpha}$ with $\alpha < s$ and $\partial E_m$ converges to $\partial\BallRadius{1}$ in $C^2$ as $m \downarrow 0$, by proper scaling and translation. In particular, $\Evol$ is convex for small $\volume \in (0,\,m_0)$.
\end{theorem}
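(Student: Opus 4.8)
The strategy is to prove that, after rescaling to unit volume, every small drop is a volume-constrained $\ConstAlmostMin$-minimizer of the $s$-perimeter whose isoperimetric deficit is quantitatively small, and then to combine the quantitative fractional isoperimetric inequality with the $\eps$-regularity theory for nonlocal almost-minimal surfaces. Fix $\length$ as in \eqref{eq:defOfLength} and, for a minimizer $\Evol$, set $\EtildeVol\coloneqq\length^{-1}\tonde*{\Evol-\xvolume}$ with $\xvolume$ to be chosen, so that $\abs{\EtildeVol}=\abs{\BallRadius1}$ and $\Ps(\EtildeVol)=\length^{s-\dimension}\Ps(\Evol)$. From \cite{CesNov17} and \cite{CRS10} I take that minimizers exist and are bounded, that $\sMeanCurvature{\Evol}+\potential=\lambda_\volume$ holds on $\partial\Evol$ for some $\lambda_\volume\in\R$, and that $\Evol$ is a volume-constrained $\ConstAlmostMin$-minimizer of $\Ps$; after rescaling one checks that $\EtildeVol$ is a $\ConstAlmostMin$-minimizer of $\Ps$ in a fixed ball with $\ConstAlmostMin$ independent of $\volume$, so that uniform density estimates and the nonlocal $\eps$-regularity theory apply to it.

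First I would combine the minimality of $\Evol$ with the fractional isoperimetric inequality. Normalising $\min\potential=0$ (so $\graffe{\potential=0}\neq\emptyset$ by \ref{H:coercive}), pick $z_0\in\graffe{\potential=0}$ and test $\Evol$ against $B'\coloneqq\BallRadius{\length}(z_0)$; since $\potential$ is locally Lipschitz (\ref{H:locallyLipschitz}), $\int_{B'}\potential\integralde x\le C\volume\length$. As $\potential\ge0$ this gives $\Ps(\Evol)\le\Ps(\BallRadius{\length})+C\volume\length$, while $\Ps(\Evol)\ge\Ps(\BallRadius{\length})$ yields in addition $\int_{\Evol}\potential\integralde x\le\int_{B'}\potential\integralde x\le C\volume\length$. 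Rescaling the first bound, the deficit $\delta(\EtildeVol)\coloneqq\Ps(\EtildeVol)/\Ps(\BallRadius1)-1$ satisfies $\delta(\EtildeVol)\le C\volume^{(1+s)/\dimension}$; the second bound says that the average of $\potential$ on $\Evol$ is at most $C\length$, which vanishes as $\volume\downarrow0$, so by the coercivity \ref{H:coercive} a definite fraction of $\Evol$ lies in a fixed ball $\BallRadius{\Rzero}$ with $\Rzero=\Rzero(\dimension,\potential)$, one may take $\xvolume\in\Evol\cap\BallRadius{\Rzero}$, and $\distance(\xvolume,\graffe{\potential=0})\to0$ as $\volume\downarrow0$.

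Next I would pass from the deficit to a two-sided ball inclusion. The quantitative fractional isoperimetric inequality gives $\FrkAsym(\EtildeVol)^2\le C\,\delta(\EtildeVol)\le C\volume^{(1+s)/\dimension}$, hence $\FrkAsym(\EtildeVol)\le C\volume^{1/(2\dimension)}$. Since $\EtildeVol$ is a $\ConstAlmostMin$-minimizer of $\Ps$ with uniform density estimates, this $\Lspace1$-proximity to $\BallRadius1$ upgrades to a Hausdorff one: after a bounded readjustment of $\xvolume$ (harmless for $\abs{\xvolume}\le\Rzero$, up to enlarging $\Rzero$, and for $\distance(\xvolume,\graffe{\potential=0})\to0$),
\begin{equation*}
	\xvolume+\BallRadius{\length(1-\rzero)}\subset\Evol\subset\xvolume+\BallRadius{\length(1+\rzero)}, \qquad \rzero\le C\,\FrkAsym(\EtildeVol)^{\beta},
\end{equation*}
for an explicit exponent $\beta=\beta(\dimension,s)>0$; with the appropriate $\beta$ this chains to $\rzero\le C\volume^{s^2/(2\dimension^2)}$.

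Finally, since $\EtildeVol$ is now $O(\rzero)$-close to $\BallRadius1$ and a $\ConstAlmostMin$-minimizer of $\Ps$, the $\eps$-regularity theory for nonlocal minimal surfaces (starting from \cite{CRS10}) shows that $\partial\EtildeVol$ is a $C^{1,\alpha}$ normal graph over $\partial\BallRadius1$ of vanishing norm. Rescaling the Euler--Lagrange equation to $\sMeanCurvature{\EtildeVol}=\widetilde\lambda_\volume-\length^{s}\potential(\xvolume+\length\,\cdot\,)$ on $\partial\EtildeVol$, whose right-hand side converges to a constant in $C^{0,\gamma}$ for every $\gamma<1$, the Schauder theory for the fractional mean-curvature operator (of order $1+s$) bootstraps $\partial\EtildeVol$ to $C^{2,\alpha}$ for every $\alpha<s$ — the threshold $\alpha<s$ reflecting that the Schauder gain is $1+s+\gamma$ with $\gamma<1$ — with bounds uniform in $\volume$. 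Arzelà--Ascoli, together with the fact that any subsequential limit of $\EtildeVol$ has zero deficit and is hence $\BallRadius1$ after normalisation, gives $\partial\EtildeVol\to\partial\BallRadius1$ in $C^2$; thus the principal curvatures of $\partial\EtildeVol$ converge uniformly to $1$, are positive once $\volume<\volumezero$, and $\EtildeVol$ — hence $\Evol$ — is convex. I expect the main obstacles to be the two quantitative steps above: extracting the \emph{explicit} exponent in the passage from the Fraenkel asymmetry to the Hausdorff inclusion, which forces a careful use of the nonlocal density estimates, and keeping the constants in the nonlocal Schauder bootstrap uniform as $\volume\downarrow0$, that is, as the potential term flattens and the singular set of $\Evol$ is ruled out.
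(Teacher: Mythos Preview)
Your overall route matches the paper's: deficit via energy comparison, Fraenkel asymmetry via the quantitative nonlocal isoperimetric inequality, Hausdorff closeness, then $(\Lambda,\delta)$-minimality, $C^{1,\alpha}$ via $\varepsilon$-regularity, and $C^{2,\alpha}$ via the Euler--Lagrange equation and the bootstrap of \cite{BFV14}. Your regularity/convexity endgame corresponds closely to \cref{lemmaConvergenceBoundaryMinimizers}, \cref{lemmaImprovedRegularity}, and the proof of \cref{lemmaRegularityConvexityMinimizers}, including the need to bound the rescaled Lagrange multiplier.

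The substantive difference, and the gap, is in how you pass from $\Lspace1$ to Hausdorff closeness. The paper does \emph{not} use density estimates here: \cref{thm:Uniform_closeness_of_minimizers_to_ball} is proved directly by comparing $\EtildeVol$ against dilated competitors $\lambda(\EtildeVol\cap\BallRadius{r})$ and $\lambda'(\EtildeVol\cup\BallRadius{r})$, deriving integro-differential inequalities for $r\mapsto|\EtildeVol\setminus\BallRadius{r}|$ and $r\mapsto|\BallRadius{r}\setminus\EtildeVol|$, and iterating (\cref{lem:estimateForFracDerivative}, \cref{lem:CompLikeDNRV}) to force them to vanish at $1\pm C\varepsilon^{s/\dimension}$. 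Your plan---uniform $\Lambda$-minimality first, then density estimates, then $L^1\Rightarrow$ Hausdorff---is a legitimate alternative and would in fact yield the sharper $r_0\le C|\EtildeVol\symmDiff\BallRadius1|^{1/\dimension}$, but it hides a circularity. In the paper, uniform $\Lambda$-minimality is obtained only \emph{after} Hausdorff closeness: \cref{thm:Uniform_closeness_of_minimizers_to_ball} gives $\EtildeVol\subset\BallRadius{3/2}$, this is fed into \cref{prop:Solutions_of_Penalized_Problem(Uniform_version)} to choose the penalization constant independently of $\volume$, and only then does \cref{res:UniformAlmostMinimizer} deliver a uniform $\Lambda$. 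The result of \cite{CesNov17} you invoke gives a penalization constant that a priori depends on the minimizer, and the rescaled potential $\volScaledPotential$ loses coercivity as $\volume\downarrow0$, so uniform boundedness of $\EtildeVol$ is not free. To make your route work you still need some direct competitor argument---essentially a weak form of \cref{thm:Uniform_closeness_of_minimizers_to_ball}---to trap $\EtildeVol$ in a fixed ball before uniform density estimates become available.
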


From a geometric point of view, the existence of minimizers for Problem \eqref{MinimizationLocalProblem} is related to the existence of a smooth hypersurface $\mathcal{M}$ on which the following prescribed mean curvature equation holds:
\begin{equation}
	\MeanCurvature{\mathcal{M}} + \potential = 0 \quad \text{on $\mathcal{M}$},
\end{equation}
where $\MeanCurvature{\mathcal{M}}$ is the mean curvature on $\mathcal{M}$. Indeed, if $E_m \subset \mathbb{R}^N$ is a smooth minimizer for Problem \eqref{MinimizationLocalProblem} with $|E_m|=m$, then we can obtain the following Euler-Lagrange equation:
\begin{equation*}
	H_{\partial E_m} + g = \lambda_m \quad \text{on $\partial E_m$}.
\end{equation*}
As a nonlocal analogy of the classical problem, the existence of minimizers of our energy $\EucEnergysPotential$ is related to the problem of finding a set $E$ with a smooth boundary such that the following prescribed ``fractional'' mean curvature equation holds on $\partial E$: 
\begin{equation}\label{geometricEquationFracMc}
	\sMeanCurvature{E}(x) + \potential(x) = 0 \quad \text{for $x \in \partial E$}.
\end{equation}
Here $\sMeanCurvature{E}(x)$ is the $s$-fractional mean curvature on $\partial E$ at $x$, which is defined as
\begin{equation}
	\sMeanCurvature{E}(x) \coloneqq \text{P.V.} \int_{\Rdim} \frac{\chi_{E^c}(y) - \chi_{E}(y)}{|y-x|^{\dimension+s}}\,\de y
\end{equation}
where ``P.V.'' means the Cauchy principle value. Indeed, assuming that $\Evol$ is a minimizer of $\EucEnergysPotential$ among sets with volume $\volume>0$ and $\partial \Evol$ is smooth, we can show that the following Euler-Lagrange equation holds true:
\begin{equation}\label{eulerLagrangeEqFrMc}
	\sMeanCurvature{\Evol} + \potential = \lambda_m \quad \text{on $\partial \Evol$}
\end{equation}
where $\lambda_m$ is a Lagrange multiplier. As we mentioned above, the authors in \cite{CesNov17} proved the existence of bounded minimizers with smooth boundary for Problem \eqref{MinimizationNonlocalProblem} and this implies the existence of compact hypersurfaces $\mathcal{M} \subset \mathbb{R}^N$ on which the equation \eqref{eulerLagrangeEqFrMc} holds true for any $m>0$. In this context, our main theorem may imply the existence of a compact, smooth hypersurface $\mathcal{M}_m \subset \mathbb{R}^N$ with non-negative mean curvature such that the equation \eqref{eulerLagrangeEqFrMc} holds on $\mathcal{M}_m$ for small $m>0$ and $\mathcal{M}_m$, by properly rescaling, converges to the sphere in $\mathbb{R}^N$ as $m \downarrow 0$.

The proof of \cref{theoremMainShapeMinimizers} is divided in several steps. The former part of Theorem \ref{theoremMainShapeMinimizers} (the uniform closeness of minimizers to the Euclidean ball) will be proved in Section \ref{sectionUniformClosenessToBall} and the latter part of Theorem \ref{theoremMainShapeMinimizers} (the regularity and convexity of minimizers) will be proved in Section \ref{sectionRegularityConvexityMinimizers}.

The organization of our paper is as follows: in \cref{sec:NotationEtAl} we fix some notation and we introduce the mathematical setting of our problem. 

In \cref{sectionUniformClosenessToBall} we first establish properties of minimizers of \eqref{eq:defOfEm} and consider a suitable rescaling for them. Then, we prove the uniform closeness of the rescaled minimizers to the unit Euclidean ball for small volumes in \cref{thm:Uniform_closeness_of_minimizers_to_ball}.
The strategy follows some arguments in \cite{CesNov17,DNRV15,FigMag11}.
Precisely, we derive suitable integro-differential inequalities for estimating the volume of the difference between rescaled minimizers and balls centred in the origin.
A similar argument can be found in \cite[Proposition 3.2]{CesNov17}, where the authors, applying a nonlocal version of the so-called Almgren’s Lemma (see \cite[Lemma 3]{FigMag11}), prove the boundedness of minimizers of \eqref{eq:defOfEm} for a fixed value of the volume. The first main difference with the approach in \cite{CesNov17} is that our estimates not only aim at obtaining boundedness of minimizer, but also at closeness in $\Lspace{\infty}$ to the unit ball (for small volumes).
The second one is that our estimates are uniform with respect to the volume of minimizers.
To obtain the uniformity with respect to the volume, we adapt ideas in \cite{FigMag11} coming from the classical case.
In particular, we do not employ a nonlocal Almgren’s Lemma such as \cite[Lemma 3.1]{CesNov17}, but we made use of a suitable dilation. Indeed, with our approach, the constants involved in our estimates depend only on the volume of the minimizers rather than on the minimizers themselves.  

In \cref{sec:UncoMinProblem}, with the uniform proximity result in our hands,  we can reformulate Problem \eqref{eq:defOfEm} into a variational unconstrained problem, which becomes crucial for establishing regularity. 

In \cref{sec:Regular_Convex_Minimizers}, we obtain the regularity and convexity of minimizers for Problem \eqref{MinimizationNonlocalProblem}, employing several results on the regularity of the so-called \enquote{almost minimizers} or ``$\Lambda$-minimizers'', where $\Lambda$ is independent of minimizers, of the $s$-fractional perimeter and the bootstrap argument for integro-differential equations shown in \cite{BFV14}. As a consequence, by applying the regularity result for the ``$\Lambda$-minimizers'' shown in \cite[Corollary 3.6]{FFMMM15} and from the uniform closeness of the minimizers to balls that we prove in Section \ref{sectionUniformClosenessToBall}, we obtain the convergence of the minimizers to the Euclidean ball in $C^2$-topology, which implies in particular the convexity of minimizers.

\section{Notation and Setting of the Problem}\label{sec:NotationEtAl}

We denote the Euclidean norm of $x\in\Rdim$ by $|x|$, the Euclidean ball of radius $r>0$ and centre $x\in\Rdim$ by $\BallRadiusCenter{r}{x}$ and the Euclidean ball of volume $m>0$ and centre $x\in\Rdim$ by $\BallVolumeCenter{m}{x}$. We also set $\BallRadius{r}\coloneqq\BallRadiusCenter{r}{0}$ and $\BallVolume{m}\coloneqq\BallVolumeCenter{m}{0}$.

If $E$ is a non-empty subsets of $\Rdim$, then $\dist(x,E)\coloneqq\inf\graffe*{|x-y|\SetSep y\in E}$ for all $x\in\Rdim$.

If $E,F$ are two subsets of $\Rdim$ we indicate by $E\symmDiff F$ their \textit{symmetric difference}, i.e $E\symmDiff F\coloneqq (E\setminus F)\cup(F\setminus E)$.

We denote the Lebesgue measure of $\Rdim$ by $\lebdim$ and the class of all Lebesgue measurable sets of $\Rdim$ by $\measurableSetsDim$. For $E\in\measurableSetsDim$ we also set $|E|\coloneqq\lebdim(E)$ and we denote the topological boundary of $E\in\measurableSetsDim$ by $\partial E$. $\HdimMinusOne$ denotes the $(\dimension-1)-$dimensional Hausdorff measure of $\Rdim$.

Let $\potential:\Rdim\to\R$ be
\begin{enumerate}[label=(H\arabic*)]
\item\label{H:coercive} coercive, i.e. s.t. $\lim\limits_{|x|\to+\infty}\potential(x)=+\infty$;
\item\label{H:locallyLipschitz} locally Lipschitz.
\end{enumerate}

Let $\Evol$ be a minimizer of the constrained problem \eqref{MinimizationNonlocalProblem} with volume $\volume$, that is

\begin{equation}\label{eq:defOfEm}
	\Evol\in\argmin\set*{\EucEnergysPotential(E) \coloneqq \Ps(E) + \int_{E}\potential(x)\integralde x \SetSep |E| = \volume}.
\end{equation}

Moreover, we recall the definition of \eqref{eq:defOfLength}	
for every $\volume>0$, so that $\BallVolume{\volume}=\BallRadius{\length}$ (the ball of volume $\volume$ is equal to the ball of radius $\length$).

Without loss of generality (thanks to the structure of the volume constrained problem in \eqref{eq:defOfEm}) in the whole work we assume 
\begin{equation}\label{eq:Minofg}
	\inf_{\Rdim}\potential=\potential(0)=0.
\end{equation}

\newcommand*{\xhat}{\hat{x}}
\newcommand*{\potentialTranslated}{\hat{\potential}}
Indeed, fixed any $\xhat\in\Rdim$ such that $\potential(\xhat)=\inf_{\Rdim}\potential$, by replacing the potential $\potential$ with $\potentialTranslated$, defined by $\potentialTranslated(x)\coloneqq\potential(x+\xhat)-\inf_{\Rdim}\potential$, on the one hand we observe that $\potentialTranslated$ satisfies \ref{H:coercive}, \ref{H:locallyLipschitz} and \eqref{eq:defOfEm} on the other one we note that the minimizers of $\EucEnergyParDat{s}{\potentialTranslated}$ with volume $\volume$ are the same minimizers of $\EucEnergysPotential$ with volume $\volume$ (upon a translation via $\xhat$).

\begin{remark}

We observe that $\Evol$ is well-defined, since the problem \eqref{MinimizationNonlocalProblem} admits solution if the potential $\potential$ is measurable, bounded from below and satisfies assumption \ref{H:coercive}. This can be proved via the Direct Method as done in \cite[Proposition 5.3]{CesNov17}.

As far as uniqueness is concerned, to the best of our knowledge, there are no complete results available in the literature. Even in the classical case (see Problem \eqref{MinimizationLocalProblem}) a general answer has not been provided yet, even though there are some partial results. Precisely, from \cite[Remark 1.6]{DePGol18} it follows that for large volumes, minimizers of \eqref{MinimizationLocalProblem} are unique when the potential $\potential$ is convex and coercive.

We also mention that in \cite[Theorem 1.1]{McCane98} uniqueness (up to translations) is established in $2D$, for any volume but among convex competitors, when the potential $\potential$ is convex and the set where it vanishes is bounded but non-empty.

\end{remark}

For every $\volume>0$, we let 
\begin{equation}\label{eq:defOfFamilyMinimizersVol}
\FamilyMinimizersVol\coloneqq\set*{\Evolindice}_{\indice\in\indiceSet}
\end{equation}
be the family of all the minimizers of $\EucEnergysPotential$ with volume constraint $|\Evolindice|=\volume$.
We observe that $\FamilyMinimizersVol\neq\emptyset$ because there always exists a minimizer of $\capE_{s,\potential}$ for every $m>0$.
In the sequel, by a little abuse of notation, we will drop the dependence on $\indice\in\indiceSet$ in $\Evolindice$ when it is not essential to highlight it (and also in other quantities possibly depending on $\indice\in\indiceSet$).

\section{Uniform Proximity to the Unit Ball} \label{sectionUniformClosenessToBall}

\begin{definition}\label{def:FraenkelANDsWulff}
	Let $E\in\measurableSetsDim$, i.e a measurable set of $\Rdim$, such that $0<|E|<+\infty$. We define the
	\textit{Fraenkel asymmetry of} $E$ as
	\begin{equation*}
		\FrkAsym(E)\coloneqq\inf \set*{\frac{|E\symmDiff\BallVolumeCenter{|E|}{x}|}{|E|}\SetSep x\in\Rdim},
	\end{equation*}
where $\BallVolumeCenter{|E|}{x}$ is the ball centred at $x$ with volume $|E|$.
Moreover, if $\Ps(E)<+\infty$ we define the \textit{Wulff $s$-deficit of} $E$ as
\begin{equation*}
	\WulDefs(E):=\frac{\Ps(E)}{\Ps(\BallVolume{|E|})}-1,
\end{equation*}
where $\BallVolume{|E|}$ is the ball centred at the origin with volume $|E|$.
\end{definition}
\begin{remark}
	We observe that $\FrkAsym$ and $\WulDefs$ are scaling and translation invariant, that is: 
	\begin{equation*}
	\FrkAsym(E)=\FrkAsym(\lambda E + x),
	\end{equation*}
	\begin{equation*}
	\WulDefs(E)=\WulDefs(\lambda E + x),
	\end{equation*}
	for every $\lambda\in\R\setminus\{0\}$, $x\in\Rdim$ and $E\in\measurableSetsDim$ with $0<|E|<+\infty$. Furthermore, $\bar{x}$ attains the infimum in the definition of $\FrkAsym(E)$ if and only if $x+\bar{x}$ attains the infimum in the definition of $\FrkAsym(\lambda E + x)$.
\end{remark}

We state the following quantitative isoperimetric inequality for the fractional perimeter $\Ps$ (see also \cite{FFMMM15}).

\begin{theorem}\label{thm:QuantIsoPs}
	Let $\dimension\geq2$ and $s\in(0,1)$.	
	Let $E\in\measurableSetsDim$, i.e a measurable set of $\Rdim$, such that $0<|E|<+\infty$. Then, there exists $C(\dimension,s)>0$ s.t.
	\begin{equation*}
		\WulDefs(E)\geq C(\dimension,s) \FrkAsym(E)^2.
	\end{equation*}
\end{theorem}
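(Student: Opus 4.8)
The plan is to reduce the inequality to its infinitesimal counterpart via the \emph{selection principle} of Cicalese and Leonardi, in the fractional adaptation carried out in \cite{FFMMM15}; indeed the statement is essentially \cite[Theorem 1.1]{FFMMM15}, and what follows is the structure of that argument. Throughout I would exploit that $\FrkAsym$ and $\WulDefs$ are invariant under dilations and translations and that $\FrkAsym(E)\leq 2$ always, so that there is no loss in normalising $|E|=|\BallRadius{1}|$ and in placing an optimal ball for $\FrkAsym$ at the origin.

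First I would argue by contradiction: if the estimate failed, there would be sets $\sequence{E_k}{k}\subset\measurableSetsDim$, $0<|E_k|<\infty$, with $\WulDefs(E_k)<\tfrac1k\FrkAsym(E_k)^2$, so $E_k$ is not a ball. After the normalisation, write $\eps_k\coloneqq\FrkAsym(E_k)=|E_k\symmDiff\BallRadius{1}|/|\BallRadius{1}|>0$. If $\eps_k\not\to0$, then along a subsequence $\eps_k\geq\delta_1>0$, whence $\WulDefs(E_k)\to0$, i.e. $\Ps(E_k)\to\Ps(\BallRadius{1})$ with $|E_k|=|\BallRadius{1}|$; the qualitative stability of the fractional isoperimetric inequality (concentration--compactness, where the slow decay of $|x-y|^{-\dimension-s}$ rules out dichotomy and $L^1_{\loc}$-lower semicontinuity of $\Ps$ pins the limit to a unit ball) would give translations $x_k$ with $|E_k\symmDiff\BallRadiusCenter{1}{x_k}|\to0$, so $\eps_k\to0$, a contradiction. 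Hence I may assume $\eps_k\to0$.

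The analytic core is a Fuglede-type estimate: there should be $\eta_0,c_0>0$, depending only on $\dimension,s$, such that every \emph{nearly spherical} set $F$ --- meaning $|F|=|\BallRadius{1}|$, $\int_F x\integralde x=0$, and $\partial F=\set*{(1+u(\theta))\theta\SetSep\theta\in\mathbb{S}^{\dimension-1}}$ with $\norm{u}_{C^1(\mathbb{S}^{\dimension-1})}\leq\eta_0$ --- satisfies $\WulDefs(F)\geq c_0\FrkAsym(F)^2$. I would obtain this by Taylor expanding $\Ps(F)$ in $u$: the constant term is $\Ps(\BallRadius{1})$; the linear term vanishes since the ball has constant $s$-fractional mean curvature and the volume constraint fixes the average of $u$ to higher order; and the quadratic term, expanded in spherical harmonics, is positive definite, with eigenvalue on the degree-$j$ component comparable to the gap $\lambda_{\dimension,s}(j)-\lambda_{\dimension,s}(1)>0$ for $j\geq2$, the degrees $0$ and $1$ being absorbed by the volume and barycenter constraints. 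This gives $\WulDefs(F)\geq c\,[u]_{H^{s/2}(\mathbb{S}^{\dimension-1})}^2-C\norm{u}_{C^1}[u]_{H^{s/2}(\mathbb{S}^{\dimension-1})}^2\geq c'\norm{u}_{L^2(\mathbb{S}^{\dimension-1})}^2$ for $\eta_0$ small, and $\FrkAsym(F)\leq C\norm{u}_{L^2(\mathbb{S}^{\dimension-1})}$ closes it (this is \cite[Section 3]{FFMMM15}).

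Finally I would regularise, since a general $E_k$ need not be nearly spherical: fix a large $\ConstAlmostMin$ and let $F_k$ minimise
\[
	F\longmapsto\Ps(F)+\ConstAlmostMin\,\abs*{\frac{\abs{F\symmDiff\BallRadius{1}}}{\abs{\BallRadius{1}}}-\eps_k}
\]
among $F\in\measurableSetsDim$ with $|F|=|\BallRadius{1}|$, a minimiser existing by the direct method and the compactness used above. Then $F_k$ is a $\ConstAlmostMin$-minimiser of $\Ps$, so by the regularity theory for $\ConstAlmostMin$-minimisers of the fractional perimeter (e.g. \cite[Corollary 3.6]{FFMMM15}) $\partial F_k$ is uniformly $C^{1,\alpha}$; comparing with $\BallRadius{1}$ in the penalised functional and using $\eps_k\to0$ gives $\Ps(F_k)\to\Ps(\BallRadius{1})$, and since the penalty is computed with respect to the centred ball, $|F_k\symmDiff\BallRadius{1}|\to0$, whence $\partial F_k\to\partial\BallRadius{1}$ in $C^1$ by the uniform regularity; so $F_k$ is nearly spherical for $k$ large. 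A selection lemma --- the point where $\ConstAlmostMin$ must be chosen large, uniformly in $k$, using that increasing the asymmetry of an almost-spherical set by $\delta$ costs at most a fixed multiple of $\delta$ in $\Ps$ --- then forces $\FrkAsym(F_k)=\eps_k$. The Fuglede estimate yields $\WulDefs(F_k)\geq c_0\eps_k^2$, while $E_k$ is admissible with vanishing penalty, so $\Ps(F_k)\leq\Ps(E_k)$ and (equal volumes) $\WulDefs(F_k)\leq\WulDefs(E_k)<\tfrac1k\eps_k^2$; together these force $c_0\leq1/k$, a contradiction. The hard part is exactly the two imported ingredients --- the second variation/Fuglede estimate for $\Ps$, which hinges on the spectrum of the linearised fractional mean curvature on $\mathbb{S}^{\dimension-1}$, and the uniform regularity of $\ConstAlmostMin$-minimisers --- together with calibrating $\ConstAlmostMin$ in the selection lemma.
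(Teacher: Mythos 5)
The paper does not prove \cref{thm:QuantIsoPs}: it is stated as a known result and the reader is simply referred to \cite{FFMMM15} (where it appears as Theorem~1.1). Your proposal is an accurate reconstruction of the selection-principle proof given there, and the three pillars you identify — reduction by contradiction and normalization to the nearly spherical regime, the Fuglede-type second-variation estimate with a spectral gap on spherical harmonics, and the regularization via $\ConstAlmostMin$-minimizers with uniform $C^{1,\alpha}$ estimates — are exactly the ones used. Two small imprecisions worth flagging. First, in \cite{FFMMM15} the penalty is taken to be $\tfrac{\ConstAlmostMin}{4}\,|\FrkAsym(F)-\eps_k|$ rather than $\ConstAlmostMin\,\bigl|\,|F\symmDiff\BallRadius{1}|/|\BallRadius{1}|-\eps_k\bigr|$; with your fixed-ball penalty the minimizer $F_k$ need not have zero barycenter, and since the Fuglede estimate is only valid after barycenter normalization, you would need an extra recentering step before invoking it (and then you would be estimating $\FrkAsym(F_k)$, which is a priori only $\leq\eps_k$, not the quantity forced equal to $\eps_k$ by the selection lemma). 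Penalizing the translation-invariant asymmetry directly, as in \cite{FFMMM15}, makes these two quantities coincide and avoids the issue. Second, the Gagliardo seminorm appearing in the lower bound on the second variation is of order $(1+s)/2$, not $s/2$; this does not affect the logic of the argument.
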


We are now ready to establish properties of minimizers of $\eqref{MinimizationNonlocalProblem}$.

\begin{proposition}[Properties of Minimizers]\label{prop:properties_of_minimizers}
Let $s\in(0,1)$ and assume that $\potential$ satisfies \ref{H:coercive}, \ref{H:locallyLipschitz} and \eqref{eq:Minofg}.
Let 
$\FamilyMinimizersVol\coloneqq\set*{\Evolindice}_{\indice\in\indiceSet}$
be as in \eqref{eq:defOfFamilyMinimizersVol} and $\length$ be as in \eqref{eq:defOfLength} for every $\volume>0$.
Then, there exists $C(\dimension,s)>0$ s.t. for every $\volume>0$ and $\indice\in\indiceSet$
\begin{equation}\label{eq:WulDefsEstimate}
	\WulDefs(\Evolindice)\leq C(\dimension,s)\length^s\sup_{\BallRadius{\length}}\potential,
\end{equation}

\begin{equation}\label{eq:FrkAsymEstimate}
	\FrkAsym(\Evolindice)=\frac{|\Evolindice\symmDiff\BallVolumeCenter{\volume}{\xvolumeindice}|}{\volume}\leq C(\dimension,s)\sqrt{\length^s\sup_{\BallRadius{\length}}\potential},
\end{equation}
for some $\xvolume=\xvolumeindice\in\Rdim$.

Moreover, there exist $\volumezero=\volumezero(\dimension, s,\potential)>0$ and $\Rzero=\Rzero(\dimension, \potential)>0$ such that for every $\volume\leq\volumezero$ and $\indice\in\indiceSet$
\begin{equation}\label{eq:translationsUB}
	|\xvolumeindice|\leq\Rzero,
\end{equation}
and
\begin{equation}\label{eq:translationsConverging}
\lim\limits_{\volume\to 0}\sup_{\indice\in\indiceSet}\distance(\xvolumeindice,\graffe{\potential=0})=0.
\end{equation}
Furthermore, if we let
\begin{equation}\label{eq:Def_Rescaled_Minimizer}
	\EtildeVol\coloneqq\frac{(\Evol-\xvolume)}{\length}
\end{equation}
and
\begin{equation}\label{eq:Def_VolScaledPotential}
	\volScaledPotential(x)\coloneqq\length^s \potential(\length x + \xvolume),
\end{equation}	
then
	\begin{equation}\label{eq:RescaledSetSolvesRescPB}
		\EtildeVol\in\argmin\left\{ \Ps(F) + \int_{F}\volScaledPotential(x)\integralde x \SetSep |F| = |\Bone| \right\}
	\end{equation}
for every $\volume>0$.
\end{proposition}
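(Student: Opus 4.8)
The plan is to exploit the minimality of $\Evol$ by comparing it with the ball $\BallVolume{\volume}=\BallRadius{\length}$ centred at the origin, which is an admissible competitor since $|\BallVolume{\volume}|=\volume$, and to combine this with the fractional isoperimetric inequality together with its quantitative version \cref{thm:QuantIsoPs}. First, since $\potential\geq0$ by \eqref{eq:Minofg}, minimality gives
\[
\Ps(\Evol)\ \leq\ \Ps(\Evol)+\integral{\Evol}\potential\ \leq\ \Ps(\BallRadius{\length})+\integral{\BallRadius{\length}}\potential\ \leq\ \Ps(\BallRadius{\length})+\volume\sup_{\BallRadius{\length}}\potential .
\]
Dividing by $\Ps(\BallRadius{\length})$ and using the scaling identities $\Ps(\BallRadius{\length})=\length^{\dimension-s}\Ps(\Bone)$ and $\volume=\length^{\dimension}|\Bone|$, one obtains \eqref{eq:WulDefsEstimate} with $C(\dimension,s)=|\Bone|/\Ps(\Bone)$. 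Then \eqref{eq:FrkAsymEstimate} is immediate from \cref{thm:QuantIsoPs}, which gives $\FrkAsym(\Evol)^{2}\leq C(\dimension,s)^{-1}\WulDefs(\Evol)$; the infimum defining $\FrkAsym(\Evol)$ is attained at some $\xvolume\in\Rdim$ because $x\mapsto|\Evol\symmDiff\BallVolumeCenter{\volume}{x}|=2\volume-2|\Evol\cap\BallVolumeCenter{\volume}{x}|$ is continuous and tends to $2\volume$ as $|x|\to\infty$.

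For the translation estimates, I would first record that the same comparison, together with $\Ps(\Evol)\geq\Ps(\BallRadius{\length})$, yields $\volume^{-1}\integral{\Evol}\potential\leq\sup_{\BallRadius{\length}}\potential$, whose right-hand side tends to $0$ as $\volume\to0$ since $\potential$ is continuous with $\potential(0)=0$. On the other hand \eqref{eq:FrkAsymEstimate} forces $|\Evol\cap\BallVolumeCenter{\volume}{\xvolume}|\geq\volume(1-\FrkAsym(\Evol))\geq\volume/2$ once $\volume$ is small. If $|\xvolume|\geq R$ and $\length\leq1$, then $\BallVolumeCenter{\volume}{\xvolume}\subset\set{|x|\geq R-1}$, so $\volume^{-1}\integral{\Evol}\potential\geq\frac12\inf_{\set{|x|\geq R-1}}\potential$; by coercivity \ref{H:coercive} one can fix $\Rzero=\Rzero(\dimension,\potential)$ with $\inf_{\set{|x|\geq \Rzero-1}}\potential>2\sup_{\Bone}\potential$, and then the two bounds become incompatible for $\volume$ small, which proves \eqref{eq:translationsUB}. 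For \eqref{eq:translationsConverging} I argue by contradiction: if it failed, there would exist $\delta>0$, volumes $\volume_k\to0$ and minimizers $E_{\volume_k}$ with centres $x_{\volume_k}$ such that $\distance(x_{\volume_k},\set{\potential=0})\geq\delta$; by \eqref{eq:translationsUB} the balls $\BallVolumeCenter{\volume_k}{x_{\volume_k}}$ eventually lie in the compact set $K\coloneqq\set{|x|\leq\Rzero+1}\cap\set{\distance(x,\set{\potential=0})\geq\delta/2}$, on which the continuous function $\potential$ is bounded below by some $c_{\delta}>0$; then $\volume_k^{-1}\integral{E_{\volume_k}}\potential\geq c_{\delta}/2>0$, contradicting $\volume_k^{-1}\integral{E_{\volume_k}}\potential\to0$. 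All the constants and thresholds above depend only on $\dimension$, $s$ and $\potential$, never on $\indice\in\indiceSet$, since every estimate only uses minimality against the fixed competitor $\BallRadius{\length}$; this gives the claimed uniformity.

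Finally, \eqref{eq:RescaledSetSolvesRescPB} is a change of variables. Under the map $F=\length F'+\xvolume$ one has $\Ps(F)=\length^{\dimension-s}\Ps(F')$ and $\integral{F}\potential=\length^{\dimension-s}\integral{F'}\volScaledPotential$ with $\volScaledPotential$ as in \eqref{eq:Def_VolScaledPotential}, hence $\EucEnergysPotential(\length F'+\xvolume)=\length^{\dimension-s}\bigl(\Ps(F')+\integral{F'}\volScaledPotential\bigr)$, while $|\length F'+\xvolume|=\volume$ if and only if $|F'|=|\Bone|$. Applying this with $\Evol=\length\EtildeVol+\xvolume$ and an arbitrary competitor $F'$ with $|F'|=|\Bone|$, the minimality of $\Evol$ transfers verbatim to $\EtildeVol$ for the rescaled functional, which is exactly \eqref{eq:RescaledSetSolvesRescPB}. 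The main obstacle is the pair of translation estimates \eqref{eq:translationsUB}--\eqref{eq:translationsConverging}: one must combine the quantitative isoperimetric inequality (which localizes $\Evol$ near $\BallVolumeCenter{\volume}{\xvolume}$) with the coercivity of $\potential$ (which makes mass far from $\set{\potential=0}$ energetically expensive) while keeping all thresholds independent of the minimizer and quantitative in $\volume$; the remaining points reduce to direct comparison and scaling.
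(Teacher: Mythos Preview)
Your proof is correct and follows essentially the same strategy as the paper: compare $\Evol$ with $\BallRadius{\length}$ to bound the Wulff $s$-deficit, invoke the quantitative fractional isoperimetric inequality for the Fraenkel asymmetry, then combine the resulting mass localisation $|\Evol\cap\BallVolumeCenter{\volume}{\xvolume}|\geq\volume/2$ with the potential-energy bound $\volume^{-1}\int_{\Evol}\potential\leq\sup_{\BallRadius{\length}}\potential$ and coercivity to control $\xvolume$, and finally rescale. Two cosmetic differences: the paper justifies attainment of the Fraenkel infimum by citing boundedness of $\Evol$ from \cite{CesNov17} rather than your direct continuity argument, and it reverses your order for the translation estimates, first proving \eqref{eq:translationsConverging} via a case split on whether $d_\volume\coloneqq\sup_{\indice}\distance(\xvolumeindice,\{\potential=0\})>2\length$ and then deducing \eqref{eq:translationsUB} from it (since $\{\potential=0\}$ is bounded by coercivity); your route (first \eqref{eq:translationsUB} by coercivity, then \eqref{eq:translationsConverging} by compactness and contradiction) is equally valid and arguably slightly more direct.
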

\begin{proof}
	
	This proof follows the strategy adopted in part of \cite[Theorem 9]{FigMag11} combined with \cite[Proposition 3.2]{CesNov17}.
	
	Let $\volume>0$ and $\indice\in\indiceSet$.
	By the non-negativity  of $\potential$ (recall \eqref{eq:Minofg}) and the minimality of $\Evolindice$ (see \eqref{eq:defOfFamilyMinimizersVol}),
	\begin{equation}\label{key}
		\Ps(\Evolindice)\leq\Ps(\Evolindice)+\integral{\Evolindice}\potential\integralde x\leq \Ps(\BallVolume{\volume})+\integral{\BallVolume{\volume}}\potential\integralde x.
	\end{equation}
	Then,
	\begin{align*}
		\WulDefs(\Evolindice)\leq\frac{\integral{\BallVolume{\volume}}\potential(x)\integralde x}{\Ps(\BallVolume{\volume})}=\frac{\length^{\dimension}\integral{\Bone}\potential(\length \xtilde)\integralde \xtilde}{\length^{\dimension-s}\Ps(\Bone)}\leq \frac{|\Bone|}{\Ps(\Bone)}\length^s\sup_{\BallRadius{\length}}\potential,
	\end{align*}
so that \eqref{eq:WulDefsEstimate} is proved.
Since by \cite[Proposition 3.2]{CesNov17} $\Evolindice$ is bounded, the infimum in the definition of $\FrkAsym(\Evolindice)$ (see \cref{def:FraenkelANDsWulff}) is actually a minimum. This observation joint with \cref{thm:QuantIsoPs} and \eqref{eq:WulDefsEstimate} implies \eqref{eq:FrkAsymEstimate} for some $\xvolumeindice\in\Rdim$.

Thanks to the local boundedness of $\potential$ (see \ref{H:locallyLipschitz}), from \eqref{eq:FrkAsymEstimate} we can find a constant $\volumezero(\dimension, s, \potential)>0$ such that
\begin{equation*}
	\frac{|\Evolindice\setminus\BallRadiusCenter{\length}{\xvolumeindice}|}{\volume}\leq\frac{1}{2},
\end{equation*} 
for all $\volume\leq\volumezero$ and $\indice\in\indiceSet$;
which (since $|\Evolindice|=\volume$) is equivalent to
\begin{equation}\label{k1}
	|\Evolindice\cap\BallRadiusCenter{\length}{\xvolumeindice}|\geq\frac{\volume}{2},
\end{equation} 
for all $\volume\leq\volumezero$  and $\indice\in\indiceSet$.

\newcommand{\distxvolToZeroset}{d_m}
\newcommand{\badsetforconvergence}{I_{\volumezero}}

Since \ref{H:coercive} holds, we observe that $\graffe*{\potential=0}$ is contained in a compact subset of $\Rdim$. Then, \eqref{eq:translationsConverging} implies \eqref{eq:translationsUB} for some $\Rzero=\Rzero(\dimension,\potential)>0$ (possibly choosing $\volumezero=\volumezero(\dimension, s,\potential)>0$ smaller).

We now claim that \eqref{eq:translationsConverging} holds.

We define $\distxvolToZeroset\coloneqq\sup_{\indice\in\indiceSet}\distance(\xvolumeindice,\graffe{\potential=0})$ for every $\volume\in (0,\volumezero]$ and we consider the set $\badsetforconvergence\coloneqq\graffe{\volume\in (0,\volumezero]:\distxvolToZeroset>2\length}$.
	
	We first observe that the fact that $\length$ converges to zero as $\volume\to 0$ implies,
	\begin{equation}\label{eq:Trivial}
	\lim_{\substack{\volume\to 0\\\volume\in(0,\volumezero]\setminus\badsetforconvergence}}\distxvolToZeroset =0,
	\end{equation}
	whenever $0$ is an accumulation point for $(0,\volumezero]\setminus\badsetforconvergence$.
	
	If $\volume\in\badsetforconvergence$, then $\BallRadiusCenter{\length}{\xvolumeindice}\subset\Rdim\setminus\tonde{\graffe{\potential=0}+\BallRadius{\distxvolToZeroset-2\length}}$ for some $\indice\in\indiceSet$ (by definition of $\distxvolToZeroset$ and the triangular inequality).
	
	We now observe that by the fractional isoperimetric inequality and \eqref{key} it immediately follows
	\begin{equation}\label{k2}
		\integral{\Evolindice}\potential\integralde x\leq\integral{\BallVolume{\volume}}\potential\integralde x.
	\end{equation}
	Then, by \eqref{k1}, \eqref{k2} and the non-negativity  of $\potential$ (see \eqref{eq:Minofg}):
	\begin{equation*}
		\frac{\volume}{2}\inf_{\Rdim\setminus\tonde{\graffe{\potential=0}+\BallRadius{\distxvolToZeroset-2\length}}}\potential\leq\integral{\Evolindice\cap\BallRadiusCenter{\length}{\xvolumeindice}}\potential\integralde x\leq \integral{\BallVolume{\volume}}\potential\integralde x\leq\volume\sup_{\BallRadius{\length}}\potential,
	\end{equation*}
	for some $\indice\in\indiceSet$.
	
	Therefore, since $\potential$ is continuous and $\potential(0)=0$ (see \eqref{eq:Minofg} and \ref{H:locallyLipschitz}):
	\begin{equation}\label{eq:ContImpliesxmconv}
			\limsup_{\substack{\volume\to 0\\\volume\in\badsetforconvergence}}\inf_{\Rdim\setminus\tonde{\graffe{\potential=0}+\BallRadius{\distxvolToZeroset-2\length}}}\potential\leq 2 \limsup_{\volume\to 0}\sup_{\BallRadius{\length}}\potential=0,
		\end{equation}
		
	whenever $0$ is an accumulation point for $\badsetforconvergence$.
	
	From \eqref{eq:ContImpliesxmconv} and the coercivity of $\potential$ (see \ref{H:coercive}) and the non-negativity  of $\potential$ (see \eqref{eq:Minofg}) we then deduce that
	
	\begin{equation}\label{eq:LessTrivial}
		\lim_{\substack{\volume\to 0\\\volume\in\badsetforconvergence}}\distxvolToZeroset =0,
		\end{equation}
	whenever $0$ is an accumulation point for $\badsetforconvergence$.
	
Therefore, \eqref{eq:translationsConverging} is proved by combining \eqref{eq:Trivial} and \eqref{eq:LessTrivial}.

Since
\begin{equation*}
	\EucEnergysPotential(\length F+\xvolume)=\length^{\dimension-s}\EucEnergysVolScaledPotential(F)
\end{equation*}
for every $F\in\measurableSetsDim$ with $|F|=|\Bone|$, \eqref{eq:RescaledSetSolvesRescPB} follows.
\end{proof}

In order to prove the uniform closeness of the rescaled minimizers $\EtildeVol$ to the Euclidean ball for small volumes (see \cref{thm:Uniform_closeness_of_minimizers_to_ball}), we need two technical lemmas.

\begin{lemma}[Estimate for the Fractional Derivative]\label{lem:estimateForFracDerivative}
	Let $a,b\in\R$ such that $a<b$ and $s\in(0,1)$. 
	\begin{enumerate}[label=(\roman*)]
		\item\label{item:decreasing_case} Let $f:[a,+\infty)\to [0,+\infty)$ be absolutely continuous and monotone non-increasing such that $\lim\limits_{r\to+\infty}f(r)=0$.
		Then,
		\begin{equation}\label{eq:stimFracDer}
			-(1-s)\int_{a}^{b}\integralde r \int_{r}^{+\infty}\integralde t \frac{f'(t)}{(t-r)^s}\leq (b-a)^{1-s}f(a)
		\end{equation}
		if the integral in \eqref{eq:stimFracDer} is finite.
		
		\item\label{item:increasing_case} Let $f:[0,b]\to [0,+\infty)$ be absolutely continuous and monotone non-decreasing such that $f(0)=0$.
		Then,
		\begin{equation}\label{eq:stimFracDerBIS}
			(1-s)\int_{a}^{b}\integralde r \int_{0}^{r}\integralde t \frac{f'(t)}{(r-t)^s}\leq (b-a)^{1-s}f(b)
		\end{equation}
		if the integral in \eqref{eq:stimFracDerBIS} is finite.
		
	\end{enumerate}

\end{lemma}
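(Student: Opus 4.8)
The plan is to prove both parts by the same two-move argument: first interchange the order of the two integrations via Tonelli's theorem, and then bound the resulting one-variable kernel integral by $(b-a)^{1-s}$ using subadditivity of $\tau\mapsto\tau^{1-s}$, i.e.\ the elementary inequality $(x+h)^{1-s}-x^{1-s}\le h^{1-s}$ for $x,h\ge 0$ and $s\in(0,1)$ (which follows from concavity of $\tau\mapsto\tau^{1-s}$ together with $0^{1-s}=0$).

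For part \ref{item:decreasing_case}, I would first record that $f'\le 0$ almost everywhere (monotonicity plus absolute continuity), so the function $(r,t)\mapsto -f'(t)(t-r)^{-s}$ is non-negative on $\set*{(r,t)\SetSep a<r<b,\ t>r}$. Since the double integral is assumed finite, Tonelli's theorem lets me rewrite the left-hand side of \eqref{eq:stimFracDer} as
\[
(1-s)\int_a^{+\infty}\bigl(-f'(t)\bigr)\left(\int_a^{\min\{t,b\}}\frac{\integralde r}{(t-r)^s}\right)\integralde t .
\]
The inner integral is elementary: $(1-s)\int_a^{c}(t-r)^{-s}\integralde r=(t-a)^{1-s}-(t-c)^{1-s}$ for $a\le c\le t$, and with $c=\min\{t,b\}$ this is at most $(b-a)^{1-s}$ — trivially when $t\le b$, and by the subadditivity inequality applied with $x=t-b$, $h=b-a$ when $t>b$. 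Inserting this bound and using the fundamental theorem of calculus for absolutely continuous functions together with $\lim\limits_{t\to+\infty}f(t)=0$ gives
\[
(1-s)\int_a^{+\infty}\bigl(-f'(t)\bigr)\left(\int_a^{\min\{t,b\}}\frac{\integralde r}{(t-r)^s}\right)\integralde t\le(b-a)^{1-s}\int_a^{+\infty}\bigl(-f'(t)\bigr)\integralde t=(b-a)^{1-s}f(a),
\]
which is exactly \eqref{eq:stimFracDer}.

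Part \ref{item:increasing_case} runs in parallel: now $f'\ge 0$ a.e., the relevant integrand is again non-negative, and Tonelli turns the left-hand side of \eqref{eq:stimFracDerBIS} into $(1-s)\int_0^b f'(t)\bigl(\int_{\max\{t,a\}}^{b}(r-t)^{-s}\integralde r\bigr)\integralde t$. The inner integral equals $(b-t)^{1-s}-(\max\{t,a\}-t)^{1-s}$, which is $\le(b-a)^{1-s}$ — obvious when $t\ge a$, and by the same subadditivity inequality (with $x=a-t$, $h=b-a$) when $t<a$. Finishing with $f(0)=0$ and the fundamental theorem of calculus yields
\[
(1-s)\int_a^b\integralde r\int_0^r\frac{f'(t)}{(r-t)^s}\integralde t\le(b-a)^{1-s}\int_0^b f'(t)\integralde t=(b-a)^{1-s}f(b).
\]

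I do not anticipate a genuine obstacle: the only points needing a little care are the justification of Fubini--Tonelli (immediate once the sign of $f'$ is exploited, with integrability supplied by the finiteness hypothesis) and the bookkeeping of the truncated limits $\min\{t,b\}$ and $\max\{t,a\}$, where the inequality $(x+h)^{1-s}-x^{1-s}\le h^{1-s}$ does all of the real work.
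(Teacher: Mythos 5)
Your proof is correct and takes essentially the same route as the paper: interchange the order of integration, evaluate the inner $r$-integral explicitly as a difference of powers, and bound that difference by $(b-a)^{1-s}$ before applying the fundamental theorem of calculus. The only cosmetic difference is that you invoke subadditivity of $\tau\mapsto\tau^{1-s}$ to control the case $t>b$, whereas the paper notes that $t\mapsto(t-a)^{1-s}-(t-b)^{1-s}$ is non-increasing on $(b,+\infty)$ and hence bounded by $(b-a)^{1-s}$; both facts follow from concavity and are interchangeable here.
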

\begin{proof}
	We prove only $\ref{item:decreasing_case}$ since the proof of $\ref{item:increasing_case}$ is analogous.
	
	By Fubini's Theorem:
	\begin{align*}
			&-(1-s)\int_{a}^{b}\integralde r \int_{r}^{+\infty}\integralde t \frac{f'(t)}{(t-r)^s}=
			\int_{a}^{+\infty}f'(t)\integralde t \int_{a}^{t\wedge b}\integralde r\frac{\de}{\de r}\quadre*{(t-r)^{1-s}}\\
			&=\int_{a}^{+\infty}f'(t)\quadre*{(t-t\wedge b)^{1-s}-(t-a)^{1-s}}\integralde t\\		
			&=\int_{a}^{b}(-f'(t))(t-a)^{1-s}\integralde t+
			\int_{b}^{+\infty}(-f'(t))\quadre*{(t-a)^{1-s}-(t-b)^{1-s}}\integralde t\\
			&\leq(b-a)^{1-s}\int_{a}^{+\infty}(-f'(t))=(b-a)^{1-s}f(a)
	\end{align*}
We also used that the function $(b,+\infty)\ni t\mapsto (t-a)^{1-s}-(t-b)^{1-s}$ is non-negative and monotone non-increasing, so it can be bounded by $(b-a)^{1-s}$.

\end{proof}

\begin{lemma}\label{lem:CompLikeDNRV}
	\newcommand{\cConst}{c}
	\newcommand{\ufunction}{u}
	\newcommand{\wfunction}{w}
	\newcommand{\aReal}{a}
	\newcommand{\bReal}{b}
	Let $a,b\in\R$ such that $a<b$, $s\in(0,1)$ and $\cConst>0$.	 
	
	\begin{enumerate}[label=(\roman*)]
		\item\label{item:DNRVdecreasing_case} Let $\ufunction:[\aReal,\bReal]\to [0,+\infty)$ be monotone non-increasing such that
		\begin{align*}
			\aReal+\frac{(2\ufunction(\aReal))^{\frac{s}{\dimension}}\dimension}{s\cConst}\leq\bReal,
		\end{align*}
		and
		\begin{align}\label{eq:DNRV_INTDIFF}
			2\cConst\int_{\rho}^{\bReal}\ufunction(r)^{\frac{\dimension-s}{\dimension}}\integralde r\leq \ufunction(\rho),
		\end{align}
		for every $\aReal<\rho<\bReal$.

		Then, \begin{equation}
			\ufunction\tonde*{\aReal+\frac{(2\ufunction(\aReal))^{\frac{s}{\dimension}}\dimension}{s\cConst}}=0
		\end{equation}
		
		\item\label{item:DNRVincreasing_case} Let $\wfunction:[\aReal,\bReal]\to [0,+\infty)$ be monotone non-decreasing such that
				\begin{align*}
					\bReal-\frac{(2\wfunction(\bReal))^{\frac{s}{\dimension}}\dimension}{s\cConst}\geq\aReal,
				\end{align*}
				and
				\begin{align*}
					2\cConst\int_{\aReal}^{\rho}\wfunction(r)^{\frac{\dimension-s}{\dimension}}\integralde r\leq \wfunction(\rho),
				\end{align*}
				for every $\aReal<\rho<\bReal$.

				Then, \begin{equation}
					\wfunction\tonde*{\bReal-\frac{(2\wfunction(\bReal))^{\frac{s}{\dimension}}\dimension}{s\cConst}}=0
				\end{equation}
	\end{enumerate}
	
\end{lemma}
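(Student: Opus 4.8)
I will prove \ref{item:DNRVdecreasing_case} and obtain \ref{item:DNRVincreasing_case} from it by the reflection $r\mapsto a+b-r$: if $w$ satisfies the hypotheses of \ref{item:DNRVincreasing_case}, then $u(r):=w(a+b-r)$ is non-increasing on $[a,b]$, the change of variables turns $\int_{a}^{\rho}w(r)^{\frac{\dimension-s}{\dimension}}\de r$ into the tail integral $\int_{a+b-\rho}^{b}u(r)^{\frac{\dimension-s}{\dimension}}\de r$, and the two length conditions and the two conclusions correspond to each other under $\rho\leftrightarrow a+b-\rho$. So from now on $u:[a,b]\to[0,+\infty)$ is non-increasing and satisfies \eqref{eq:DNRV_INTDIFF}; I may also assume $u(a)>0$, since otherwise $u\equiv 0$ on $[a,b]$ and the asserted vanishing point is just $a$.

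\textbf{Reduction to a differential inequality for the tail.} Set $U(\rho):=\int_{\rho}^{b}u(r)^{\frac{\dimension-s}{\dimension}}\de r$. Since $u$ is monotone it is measurable and bounded on $[a,b]$, so $U$ is Lipschitz, non-increasing, non-negative, with $U(b)=0$ and $U'(\rho)=-u(\rho)^{\frac{\dimension-s}{\dimension}}$ for a.e.\ $\rho$. Hypothesis \eqref{eq:DNRV_INTDIFF} reads exactly $2cU(\rho)\le u(\rho)$ for $\rho\in(a,b)$, and letting $\rho\downarrow a$ (using continuity of $U$ and monotonicity of $u$) it holds at $\rho=a$ as well. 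Raising $u(\rho)\ge 2cU(\rho)\ge 0$ to the power $\tfrac{\dimension-s}{\dimension}>0$ gives, for a.e.\ $\rho\in(a,b)$,
\begin{equation*}
	-U'(\rho)=u(\rho)^{\frac{\dimension-s}{\dimension}}\ge (2c)^{\frac{\dimension-s}{\dimension}}\,U(\rho)^{\frac{\dimension-s}{\dimension}}.
\end{equation*}

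\textbf{Integration and conclusion.} Let $\rho_{1}:=\inf\set{\rho\in[a,b]:U(\rho)=0}$, which is well defined because $U(b)=0$; then $U>0$ on $[a,\rho_{1})$ and $U(\rho_{1})=0$ by continuity. On each compact subinterval of $[a,\rho_{1})$ the function $U$ is bounded away from $0$, hence $U^{s/\dimension}$ is Lipschitz there and $\tfrac{\de}{\de\rho}U^{s/\dimension}=\tfrac{s}{\dimension}U^{\frac{s}{\dimension}-1}U'\le-\tfrac{s}{\dimension}(2c)^{\frac{\dimension-s}{\dimension}}$ a.e.; integrating from $a$ to $\rho$ and letting $\rho\uparrow\rho_{1}$ yields
\begin{equation*}
	\rho_{1}-a\le \frac{\dimension}{s}\,(2c)^{-\frac{\dimension-s}{\dimension}}U(a)^{\frac{s}{\dimension}}\le \frac{\dimension}{s}\,(2c)^{-\frac{\dimension-s}{\dimension}}\Bigl(\frac{u(a)}{2c}\Bigr)^{\frac{s}{\dimension}}=\frac{\dimension\,u(a)^{s/\dimension}}{2sc}<\frac{\dimension\,(2u(a))^{s/\dimension}}{sc},
\end{equation*}
where the last step is strict because $2^{s/\dimension}>\tfrac12$ and $u(a)>0$. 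Hence $\rho_{1}<\rho^{\ast}:=a+\frac{\dimension(2u(a))^{s/\dimension}}{sc}$, and $\rho^{\ast}\le b$ by hypothesis. Since $U(\rho_{1})=0$ and $U$ is non-increasing, $U\equiv 0$ on $[\rho_{1},b]$, so $u=0$ a.e.\ on $[\rho_{1},b]$; choosing $\rho'\in(\rho_{1},\rho^{\ast})$ with $u(\rho')=0$ and using that $u$ is non-increasing gives $u(\rho^{\ast})\le u(\rho')=0$, which is the claim.

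\textbf{Main obstacle.} The one genuine subtlety is that $t\mapsto t^{\frac{\dimension-s}{\dimension}}$ is not Lipschitz at $0$, so a standard ODE comparison/uniqueness theorem is not directly applicable; the remedy is to run the estimate only on intervals where $U$ stays strictly positive and pass to the limit at $\rho_{1}$. A related point, easily overlooked, is that this integro-differential argument only produces ``$u=0$ a.e.\ beyond $\rho_{1}$'', so reaching the pointwise conclusion at $\rho^{\ast}$ really requires the strict inequality $\rho_{1}<\rho^{\ast}$ — which is precisely where the gap between the constant $2c$ in the hypothesis and the factor $(2u(a))^{s/\dimension}$ in the conclusion is spent.
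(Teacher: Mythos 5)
Your proof is correct, and it takes a genuinely different route from the paper's. The paper constructs an explicit barrier $h(\rho)=\bigl[(2u(a))^{s/\dimension}-c\tfrac{s}{\dimension}(\rho-a)\bigr]^{\dimension/s}$ satisfying the exact identity $c\int_{\rho}^{b}h^{(\dimension-s)/\dimension}\de r=h(\rho)$, then runs a sequential infimum argument on the set $\{\rho : u\geq h \text{ on } [\rho,b]\}$ to conclude $u\leq h$ on $[a,\rho^{\ast}]$, and reads off $u(\rho^{\ast})\leq h(\rho^{\ast})=0$. You instead pass to the Lipschitz tail function $U(\rho)=\int_{\rho}^{b}u^{(\dimension-s)/\dimension}$, rewrite \eqref{eq:DNRV_INTDIFF} as a differential inequality $-U'\geq(2c)^{(\dimension-s)/\dimension}U^{(\dimension-s)/\dimension}$, integrate $\bigl(U^{s/\dimension}\bigr)'\leq-\tfrac{s}{\dimension}(2c)^{(\dimension-s)/\dimension}$ away from the zero set of $U$, and bound the vanishing point $\rho_{1}$ of $U$ directly; the factor $2^{1+s/\dimension}$ of slack between the hypothesis' $2c$ and the conclusion's $(2u(a))^{s/\dimension}$ gives the strict inequality $\rho_{1}<\rho^{\ast}$ that is needed to upgrade ``$u=0$ a.e. beyond $\rho_{1}$'' to the pointwise vanishing at $\rho^{\ast}$. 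Your approach replaces the paper's sequence-and-comparison step with a one-line ODE integration, is arguably more elementary, and makes the role of the constant-$2$ mismatch explicit. Your reflection reduction for part \ref{item:DNRVincreasing_case} is also sound (and more explicit than the paper's ``similar and left to the reader''). One small presentational caveat: the case $U(a)=0$ with $u(a)>0$ (so $u\equiv 0$ a.e. on $(a,b]$ but $u(a)>0$) is implicitly covered by your argument since then $\rho_{1}=a<\rho^{\ast}$ trivially, but it would be cleaner to note it alongside the $u(a)=0$ case.
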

\begin{proof}
	\newcommand{\cConst}{c}
	\newcommand{\ufunction}{u}
	\newcommand{\aReal}{a}
	\newcommand{\bReal}{b}
	The proof follows the argument in \cite[Lemma 4.1]{DNRV15} with slight adjustments.
	
	We prove $\ref{item:DNRVdecreasing_case}$.
	
	We highlight that in this proof the constant $\cConst$ in \eqref{eq:DNRV_INTDIFF} cannot vary from line to line: it has to be considered fixed.
	Now we argue in a similar fashion as in \cite[Lemma 4.1]{DNRV15}.
	\newcommand{\auxODEsol}{h}
	We define the auxiliary function $\auxODEsol:[\aReal,+\infty)\to\R$ as
	\begin{equation*}
		\auxODEsol(\rho)\coloneqq\begin{dcases*}
			\quadre*{(2\ufunction(\aReal))^{\frac{s}{\dimension}}-\cConst\frac{s}{\dimension}(\rho-\aReal)}^{\frac{\dimension}{s}} \text{ if } \aReal\leq\rho\leq \aReal+\frac{(2\ufunction(\aReal))^{\frac{s}{\dimension}}\dimension}{s\cConst}\\
			0 \text{ if } \rho>\aReal+\frac{(2\ufunction(\aReal))^{\frac{s}{\dimension}}\dimension}{s\cConst}
		\end{dcases*}
	\end{equation*}
	for every $\rho\geq\aReal$. We observe that $\auxODEsol$ is continuous, $\auxODEsol(\aReal)=2\ufunction(\aReal)$ and it satisfies
	\begin{align}\label{eq:INTDIFFauxx}
		\cConst\int_{\rho}^{\bReal}\auxODEsol(r)^{\frac{\dimension-s}{\dimension}}\integralde r= \auxODEsol(\rho),
	\end{align}
	for every $\rho\geq\aReal$. 
	Our goal is now to prove that 
	\begin{equation}\label{eq:DNRVGoal}
		\ufunction(\rho)\leq\auxODEsol(\rho)\hspace*{1cm}\forall \rho:\aReal\leq\rho\leq \aReal+\frac{(2\ufunction(\aReal))^{\frac{s}{\dimension}}\dimension}{s\cConst}.
	\end{equation}
	\newcommand{\SETufnAboveAux}{I}
	\newcommand{\infSETufnAboveAux}{R_{*}}
	\newcommand{\Renne}{R_{n}}
	We define $\SETufnAboveAux\coloneqq\set*{\rho\in(\aReal,\bReal):\ufunction\geq\auxODEsol \text{ in } [\rho,\bReal]}$ and we observe that it is an interval and that
	\begin{equation*}
		\aReal+\frac{(2\ufunction(\aReal))^{\frac{s}{\dimension}}\dimension}{s\cConst}\in\SETufnAboveAux.
	\end{equation*}
	Now, we let
	\begin{equation}\label{eq:infcompresoo}
		\infSETufnAboveAux\coloneqq\inf \SETufnAboveAux\in \quadre*{\aReal,\aReal+\frac{(2\ufunction(\aReal))^{\frac{s}{\dimension}}\dimension}{s\cConst}}.
	\end{equation}
	Therefore, we can find a sequence $\sequence{\Renne}{n\in\N}\subseteq[\aReal,\infSETufnAboveAux]$ converging to $\infSETufnAboveAux$, such that $\auxODEsol(\Renne)\geq\ufunction(\Renne)$ for every $n\in\N$. So, combining \eqref{eq:DNRV_INTDIFF} and \eqref{eq:INTDIFFauxx}, we get
	\begin{align*}
		&\auxODEsol(\Renne)\geq\ufunction(\Renne)\geq2\cConst\int_{\Renne}^{\bReal}\ufunction(r)^{\frac{\dimension-s}{\dimension}}\integralde r\geq 2\cConst\int_{\Renne}^{\infSETufnAboveAux}\ufunction(r)^{\frac{\dimension-s}{\dimension}}\integralde r + 2\cConst\int_{\infSETufnAboveAux}^{\bReal}\auxODEsol(r)^{\frac{\dimension-s}{\dimension}}\integralde r\\
		&=2\cConst\int_{\Renne}^{\infSETufnAboveAux}\ufunction(r)^{\frac{\dimension-s}{\dimension}}\integralde r + 2\auxODEsol(\infSETufnAboveAux)\geq 2\auxODEsol(\infSETufnAboveAux).
	\end{align*}
	Letting $n$ go to $+\infty$, we obtain $\auxODEsol(\infSETufnAboveAux)\geq\lim\limits_{r\to\infSETufnAboveAux^-}\ufunction(r)\geq2\auxODEsol(\infSETufnAboveAux)$, which implies $\ufunction(\infSETufnAboveAux)=\auxODEsol(\infSETufnAboveAux)=0$. 
	By definition of $\auxODEsol$ and thanks to \eqref{eq:infcompresoo}, we finally observe that $\infSETufnAboveAux=\aReal+\frac{(2\ufunction(\aReal))^{\frac{s}{\dimension}}\dimension}{s\cConst}$. The proof of $\ref{item:DNRVdecreasing_case}$ is complete. 

The proof of $\ref{item:DNRVincreasing_case}$ is similar to the proof of $\ref{item:DNRVdecreasing_case}$ and thus is left to the reader.

\end{proof}

\begin{@empty}
	\newcommand{\minimizer}{E}
\begin{theorem}[Uniform Closeness of Rescaled Minimizers to $\BallRadius{1}$]\label{thm:Uniform_closeness_of_minimizers_to_ball}
	Let $s\in(0,1)$ and $\potential$ s.t. \ref{H:coercive}, \ref{H:locallyLipschitz} and \eqref{eq:Minofg} hold. For every $\volume>0$ let $\Evol$ be a minimizer of $\EucEnergysPotential$ with $|\Evol|=\volume$ and let $\EtildeVol$ be as in \eqref{eq:Def_Rescaled_Minimizer}.
	There exist $\volumezero=\volumezero(\dimension, s,\potential)>0$ and $C=C(\dimension,s,\potential)>0$ such that if $0<\volume\leq\volumezero(\dimension, s,\potential)$, then 
	$\EtildeVol$ satisfies
	\begin{equation*}
		\BallRadius{1-\rzero}\subset\EtildeVol\subset\BallRadius{1+\rzero},
	\end{equation*}
	for some $0<\rzero\leq C(\dimension,s,\potential)\volume^{s^2/(2\dimension^2)}$. 

\end{theorem}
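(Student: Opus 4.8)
My plan is to work with the rescaled minimizer $\EtildeVol$, which by \cref{prop:properties_of_minimizers} solves \eqref{eq:RescaledSetSolvesRescPB} and, by \eqref{eq:FrkAsymEstimate} together with the scaling invariance of the Fraenkel asymmetry and the choice of $\xvolume$, satisfies
\begin{equation*}
	\abs{\EtildeVol\symmDiff\Bone}\le C(\dimension,s)\,\abs{\Bone}\sqrt{\length^{s}\sup\nolimits_{\BallRadius{\length}}\potential}\le C(\dimension,s,\potential)\,\length^{(1+s)/2},
\end{equation*}
the last step using \ref{H:locallyLipschitz} and $\potential(0)=0$. I would set $u(\rho)\coloneqq\abs{\EtildeVol\setminus\BallRadius{\rho}}$ and $w(\rho)\coloneqq\abs{\BallRadius{\rho}\setminus\EtildeVol}$; by the coarea formula for $x\mapsto\abs{x}$ these are absolutely continuous, $u$ is non-increasing with $u(\rho)\to0$ and $u'(\rho)=-\HdimMinusOne(\EtildeVol\cap\partial\BallRadius{\rho})$, $w$ is non-decreasing with $w(0)=0$ and $w'(\rho)=\HdimMinusOne(\partial\BallRadius{\rho}\setminus\EtildeVol)$, and the display gives $u(1),w(1)\le C\length^{(1+s)/2}$. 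The theorem then reduces to showing $u(1+\rzero)=0$ and $w(1-\rzero)=0$, which I would get from two barrier arguments feeding \cref{lem:CompLikeDNRV}.

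\emph{Outer inclusion.} For $\rho\in[1,2]$ I would set $A=\EtildeVol\cap\BallRadius{\rho}$, $D=\EtildeVol\setminus\BallRadius{\rho}$ and compare $\EtildeVol$ with $\lambda A$, $\lambda=\bigl(\abs{\Bone}/(\abs{\Bone}-u(\rho))\bigr)^{1/\dimension}\ge1$, of volume $\abs{\Bone}$. Writing $\mathcal I(X,Y)\coloneqq\int_{X}\int_{Y}\abs{x-y}^{-\dimension-s}\,\de x\,\de y$, from the disjoint-union identity $\Ps(A)=\Ps(\EtildeVol)-\Ps(D)+2\,\mathcal I(A,D)$, the scaling $\Ps(\lambda A)=\lambda^{\dimension-s}\Ps(A)$, and the bound $\int_{\lambda A}\volScaledPotential-\int_{\EtildeVol}\volScaledPotential\le\int_{\lambda A}\volScaledPotential-\int_{A}\volScaledPotential=O(\length^{s}u(\rho))$ (changing variables $x=\lambda y$ and using $\abs{\xvolume}\le\Rzero$ and the local Lipschitz bound on $\potential$, so that this error is genuinely lower order in $u(\rho)$), minimality gives $\Ps(D)-2\,\mathcal I(A,D)\le Cu(\rho)$. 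The fractional isoperimetric inequality (a fortiori from \cref{thm:QuantIsoPs}) gives $\Ps(D)\ge\Ps(\BallVolume{u(\rho)})=c_{0}u(\rho)^{(\dimension-s)/\dimension}$, so for $\volume$ small $\mathcal I(A,D)\ge\tfrac{c_{0}}{4}u(\rho)^{(\dimension-s)/\dimension}$; since every point of $A\subset\BallRadius{\rho}$ lies at distance $\ge r-\rho$ from any point at radius $r>\rho$, also $\mathcal I(A,D)\le\tfrac{\dimension\omegadim}{s}\int_{\rho}^{\infty}(r-\rho)^{-s}\HdimMinusOne(\EtildeVol\cap\partial\BallRadius{r})\,\de r=-\tfrac{\dimension\omegadim}{s}\int_{\rho}^{\infty}u'(r)(r-\rho)^{-s}\,\de r$. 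Comparing, integrating the resulting pointwise inequality in the cut radius over $[\rho_{0},2]$ and invoking \cref{lem:estimateForFracDerivative}\ref{item:decreasing_case} with $f=u$, I would get $\int_{\rho_{0}}^{2}u(r)^{(\dimension-s)/\dimension}\,\de r\le C(\dimension,s)u(\rho_{0})$ for all $\rho_{0}\in[1,2]$ --- exactly the hypothesis of \cref{lem:CompLikeDNRV}\ref{item:DNRVdecreasing_case}, whose side condition holds since $u(1)\to0$. Hence $u(1+\rzero)=0$ with $\rzero=\tfrac{\dimension}{sc}(2u(1))^{s/\dimension}\le C(\dimension,s,\potential)\length^{s(1+s)/(2\dimension)}\le C(\dimension,s,\potential)\volume^{s^{2}/(2\dimension^{2})}$ for $\volume\le1$; in particular $\EtildeVol\subset\BallRadius{2}$.

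\emph{Inner inclusion.} I would run the symmetric argument with $\rho\in[\tfrac12,1]$, $G=\BallRadius{\rho}\setminus\EtildeVol$ and competitor $\lambda(\EtildeVol\cup\BallRadius{\rho})$, $\lambda=\bigl(\abs{\Bone}/(\abs{\Bone}+w(\rho))\bigr)^{1/\dimension}\le1$. Writing $\Ps(\EtildeVol\cup\BallRadius{\rho})=\Ps(\EtildeVol)-\mathcal I(\EtildeVol,G)+\mathcal S(\rho)$ with $\mathcal S(\rho)\coloneqq\mathcal I\bigl(G,\comp{(\EtildeVol\cup\BallRadius{\rho})}\bigr)$, and using $\Ps(G)=\mathcal I(\EtildeVol,G)+\mathcal S(\rho)$, minimality (with potential error $O(\length^{s}w(\rho))$, here $\EtildeVol\subset\BallRadius{2}$ is used), the estimate $1-\lambda^{\dimension-s}\ge c_{1}w(\rho)$ (using $\lambda\le1$), and the uniform bound $\Ps(\EtildeVol)\ge\Ps(\Bone)$ force $\mathcal I(\EtildeVol,G)\le\mathcal S(\rho)$; hence $\Ps(G)\le2\mathcal S(\rho)$ and $\mathcal S(\rho)\ge\tfrac{c_{0}}{2}w(\rho)^{(\dimension-s)/\dimension}$ by fractional isoperimetry. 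Since $\comp{(\EtildeVol\cup\BallRadius{\rho})}\subset\comp{\BallRadius{\rho}}$ and each $y\in G$ lies at distance $\ge\rho-\abs{y}$ from $\comp{\BallRadius{\rho}}$, also $\mathcal S(\rho)\le\tfrac{\dimension\omegadim}{s}\int_{0}^{\rho}w'(r)(\rho-r)^{-s}\,\de r$; integrating over the cut radius in $[\tfrac12,\rho_{0}]$, \cref{lem:estimateForFracDerivative}\ref{item:increasing_case} and then \cref{lem:CompLikeDNRV}\ref{item:DNRVincreasing_case} give $w(1-\rzero)=0$ with $\rzero$ of the same order. Taking $\rzero$ to be the larger of the two radii and using that $\EtildeVol$ is open (regularity from \cite{CesNov17}) to pass from null-measure to genuine inclusions, I would conclude $\BallRadius{1-\rzero}\subset\EtildeVol\subset\BallRadius{1+\rzero}$ with $0<\rzero\le C(\dimension,s,\potential)\volume^{s^{2}/(2\dimension^{2})}$ (taking any small $\rzero>0$ if $\EtildeVol=\Bone$).

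The hard part is keeping the potential genuinely lower order, and the one sign issue. A crude estimate $\bigl|\int_{W}\volScaledPotential-\int_{\EtildeVol}\volScaledPotential\bigr|\le\norm{\volScaledPotential}_{L^{\infty}}\abs{W\symmDiff\EtildeVol}$ would only give an error $O(\length^{s})$, which is \emph{not} negligible against $u(\rho)^{(\dimension-s)/\dimension}$ once $u(\rho)$ has dropped well below the $L^{1}$-deficit; the dilation change of variables together with $\sup_{\BallRadius{r}}\potential\le Cr$ near the minimum are what bring it down to $O(\length^{s}u(\rho))=o(u(\rho))$ (and likewise $o(w(\rho))$). And in the inner inclusion the inequality $\mathcal I(\EtildeVol,G)\le\mathcal S(\rho)$ is precisely what upgrades the a priori bound $\mathcal S(\rho)\gtrsim w(\rho)$ to the isoperimetrically powered $\mathcal S(\rho)\gtrsim w(\rho)^{(\dimension-s)/\dimension}$ required by \cref{lem:CompLikeDNRV}, and it rests on $\lambda\le1$ and on $\Ps(\EtildeVol)\ge\Ps(\Bone)$ uniformly in $\volume$; the rest is bookkeeping the constants so that $\volumezero$ and $\rzero$ come out in the stated form.
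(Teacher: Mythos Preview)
Your proof is correct and follows the same strategy as the paper's: rescale via Proposition~\ref{prop:properties_of_minimizers}, compare $\EtildeVol$ with the dilated competitors $\lambda(\EtildeVol\cap\BallRadius{\rho})$ and $\lambda(\EtildeVol\cup\BallRadius{\rho})$, combine the fractional isoperimetric inequality with the interaction-integral bound, integrate via Lemma~\ref{lem:estimateForFracDerivative}, and conclude with Lemma~\ref{lem:CompLikeDNRV}. The only differences are cosmetic: you sharpen the initial $L^{1}$-deficit to $O(\length^{(1+s)/2})$ using the Lipschitz bound and $\potential(0)=0$, you track an extra $\length^{s}$ factor in the potential error, and in the inner step you route through an inequality between the two interaction integrals rather than simply discarding the negative term $(\mu^{\dimension-s}-1)\Ps(\EtildeVol\cup\BallRadius{\rho})$ as the paper does.
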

\begin{proof}
	\newcommand{\soglia}{\eps_{0}(\dimension,s,\potential)}
	\newcommand{\Cdimspotential}{C(\dimension,s,\potential)}
	\newcommand{\CdimspotentialUNCHG}{c}
	\newcommand{\Cdims}{C(\dimension,s)}
	\newcommand{\Cdim}{C(\dimension)}

	The proof is in the spirit of the one of \cite[Theorem 5]{FigMag11}.
	
	We choose  $\volumezero=\volumezero(\dimension, s,\potential)$  smaller than the constant $\volumezero$ in the statement of \cref{prop:properties_of_minimizers} so that 
	\begin{equation}\label{eq:technicalSmallLength}
		\lengthzero<1,
	\end{equation} and we choose $\Rzero=\Rzero(\dimension,\potential)$ and $\xvolume$ as in the statement of \cref{prop:properties_of_minimizers} (so that $|\xvolume|\leq\Rzero$ whenever $\volume\in(0,\volumezero)$, as in \cref{prop:properties_of_minimizers}).
	
	Then, we fix $0<\volume\leq\volumezero(\dimension, s,\potential)$.
	
	For convenience, we put $\minimizer\coloneqq\EtildeVol$ and we change our focus from the variable $\volume$ to $\eps\coloneqq\sqrt{\length^s}$.
	Therefore, applying \cref{prop:properties_of_minimizers}, we deduce that there exists $\soglia>0$ such that  $0<\eps\leq\soglia$ and
	\begin{equation}\label{eq:Wssmall}
		\WulDefs(\minimizer)\leq \Cdimspotential\eps^2;
	\end{equation}
	\begin{equation*}
		|\minimizer\symmDiff\BallRadius{1}|\leq \Cdimspotential\eps.
	\end{equation*}

	We claim (possibly choosing a smaller $\soglia$ or, equivalently, a smaller $\volumezero(\dimension,s,\potential)$) that there exists a constant $\Cdimspotential$ such that
	\begin{equation*}
		\BallRadius{1-\rzero}\subset\minimizer\subset\BallRadius{1+\rzero},
	\end{equation*}
	for some $\rzero=\rzero(\eps,\dimension,s,\potential)\leq\Cdimspotential\eps^{s/\dimension}$.
	From $\eqref{eq:Wssmall}$ (in alternative we can simply use the minimality, testing $\minimizer$ against $\BallRadius{1}$) we deduce
	\begin{equation}\label{eq:unifboundPSmin}
		\Ps(\minimizer)\leq \Cdimspotential.
	\end{equation}

	\newcommand{\ufunction}{u}
	\newcommand{\rone}{r_{1}}
	\newcommand{\mintworone}{\min\set{2,\rone}}
	\newcommand{\competitor}{F}
	\newcommand{\dilation}{\lambda}
	\newcommand{\dimOmegadimOvers}{\frac{\dimension\omegadim}{s}}
	
	\textbf{STEP 1: proof of the inclusion $\minimizer\subset\BallRadius{1+\rzero}$.}
	
	For every $r>0$ we define the non-negative and monotone non-increasing absolutely continuous function
	\begin{equation*}
		\ufunction(r)\coloneqq|\minimizer\setminus\BallRadius{r}|.
	\end{equation*}
Thanks to the monotonicity and the hypotheses we observe that for every $r\geq 1$
\begin{equation}\label{eq:Loneestimate}
	\ufunction(r)\leq\ufunction(1)\leq\Cdimspotential\eps.
\end{equation}

	If the set $\set*{r> 1 : \ufunction(r)>0}$ is empty, there is nothing to prove, so we assume it is not. We let
	\begin{equation}\label{eq:defrone}
		\rone\coloneqq\sup\set*{r> 1 : \ufunction(r)>0}\in(1,+\infty].
	\end{equation}
	Our goal is to prove that
\begin{equation}\label{eq:GoalOuterEstimate}
	\rone-1\leq \Cdimspotential \eps^{s/\dimension}.
\end{equation}
	
	We fix $r>0$ such that $1<r<\mintworone$.
	By definition of $\rone$ we have that $|\minimizer\cap\BallRadius{r}|<|\minimizer|$.

	We consider the following competitor (with volume equal to $|\minimizer|=|\BallRadius{1}|$) for the volume constrained problem,
	\begin{equation*}
		\competitor\coloneqq\dilation(\minimizer\cap\BallRadius{r}),
	\end{equation*}
	with
	\begin{equation*}
		\dilation=\dilation(r)\coloneqq\tonde*{\frac{|\minimizer|}{|\minimizer\cap\BallRadius{r}|}}^{\frac{1}{\dimension}}>1.
	\end{equation*}
	After a computation, thanks to $\eqref{eq:Loneestimate}$, we observe that $\dilation$ is arbitrarily close to $1$ (upon choosing $\soglia$ small enough). In particular (upon choosing $\soglia$ small enough) we get
	\begin{equation}\label{eq:dilationEstimate}
		0<\dilation-1\leq\Cdim\ufunction(r)
	\end{equation}
	and, since $r<2$,
	\begin{equation*}
		\competitor\subseteq\BallRadius{\dilation r}\subseteq\BallRadius{3}.
	\end{equation*}
	
	At this point we make use of the minimality of $\minimizer$ (see \eqref{eq:RescaledSetSolvesRescPB}) to write
	\begin{align*}
		&\Ps(\minimizer) + \int_{\minimizer}\volScaledPotential(x)\integralde x\\
		&\leq \Ps(\competitor) + \int_{\competitor}\volScaledPotential(x)\integralde x=\dilation^{\dimension-s}\Ps(\minimizer\cap\BallRadius{r}) + \dilation^{\dimension}\int_{\minimizer\cap\BallRadius{r}}\volScaledPotential(\dilation x)\integralde x
	\end{align*}
	Now, we recall \cite[Lemma 2.1]{DNRV15} and making use of the isoperimetric inequality for $\Ps$  and the fact that $\minimizer=(\minimizer\setminus\BallRadius{r})\sqcup(\minimizer\cap\BallRadius{r})$ we obtain
	\begin{align}
		&\Cdims(\ufunction(r))^{\frac{\dimension-s}{\dimension}}\leq\Ps(\minimizer\setminus\BallRadius{r})=\Ps(\minimizer)-\Ps(\minimizer\cap\BallRadius{r})+2\integral{\minimizer\setminus\BallRadius{r}}\integral{\minimizer\cap\BallRadius{r}}\frac{1}{\abs{x-y}^{\dimension+s}}\integralde x\de y\nonumber\\
		&\leq (\dilation^{\dimension-s}-1)\Ps(\minimizer\cap\BallRadius{r})\label{eq:termFractionalPerimeter}\\
		&+\dilation^{\dimension}\int_{\minimizer\cap\BallRadius{r}}\volScaledPotential(\dilation x)\integralde x-\int_{\minimizer}\volScaledPotential(x)\integralde x \label{eq:termPotential}\\
		&+2\integral{\minimizer\setminus\BallRadius{r}}\integral{\minimizer\cap\BallRadius{r}}\frac{1}{\abs{x-y}^{\dimension+s}}\integralde x\de y.\label{eq:termDoubleIntegral}
	\end{align}
	We first estimate \eqref{eq:termFractionalPerimeter} using \eqref{eq:unifboundPSmin}, \eqref{eq:Loneestimate}, \eqref{eq:dilationEstimate} (possibly choosing a smaller $\soglia$) and the convexity of balls (see \cite[Lemma B.1]{FFMMM15}):
	\begin{align*}
		(\dilation^{\dimension-s}-1)\Ps(\minimizer\cap\BallRadius{r})\leq \Cdims(\dilation-1)\Ps(\minimizer)\leq \Cdimspotential\ufunction(r).
	\end{align*}

	We then estimate \eqref{eq:termPotential} using the fact that $\potential$ is locally Lipschitz (see \eqref{H:locallyLipschitz}) and non-negative (see \eqref{eq:Minofg}) together with \eqref{eq:Loneestimate}, \eqref{eq:dilationEstimate}, \eqref{eq:technicalSmallLength} and the fact that $r<2$ (possibly choosing a smaller $\soglia$):
	\begin{align*}
			&\dilation^{\dimension}\int_{\minimizer\cap\BallRadius{r}}\volScaledPotential(\dilation x)\integralde x-\int_{\minimizer}\volScaledPotential(x)\integralde x \\
			&=(\dilation^{\dimension}-1)\int_{\minimizer\cap\BallRadius{r}}\volScaledPotential(\dilation x)\integralde x
			+\int_{\minimizer\cap\BallRadius{r}}(\volScaledPotential(\dilation x)-\volScaledPotential(x))\integralde x -\int_{\minimizer\setminus\BallRadius{r}}\volScaledPotential(x)\integralde x \\
			&\leq\Cdim(\dilation-1)|\minimizer\cap\BallRadius{r}|\sup_{\BallRadius{3+\Rzero}}\potential
			+(\dilation-1)r\Lip(\potential;\BallRadius{3+\Rzero})|\minimizer\cap\BallRadius{r}|\\
			&\leq \Cdimspotential\ufunction(r).
	\end{align*}
	Now we deal with \eqref{eq:termDoubleIntegral}. Arguing exactly as in \cite[equation (15)]{CesNov17}, we find 
	
	\begin{align*}
		\integral{\minimizer\setminus\BallRadius{r}}\integral{\minimizer\cap\BallRadius{r}}\frac{1}{\abs{x-y}^{\dimension+s}}\integralde x\de y\leq-\dimOmegadimOvers\int_{r}^{+\infty}\frac{\ufunction'(t)}{(t-r)^s}\integralde t.
	\end{align*}
	Gathering all the previous inequalities together, we find that for every  $1<r<\mintworone$,
	\begin{align}\label{eq:INTDIFFone}
		\Cdims(\ufunction(r))^{\frac{\dimension-s}{\dimension}}\leq \Cdimspotential\ufunction(r)-\dimOmegadimOvers\int_{r}^{+\infty}\frac{\ufunction'(t)}{(t-r)^s}\integralde t.
	\end{align}
	Now, recalling \eqref{eq:Loneestimate}, upon choosing $\soglia$ smaller, we can rewrite \eqref{eq:INTDIFFone} as
	\begin{align}\label{eq:INTDIFFtwo}
		\Cdimspotential(\ufunction(r))^{\frac{\dimension-s}{\dimension}}\leq -\dimOmegadimOvers\int_{r}^{+\infty}\frac{\ufunction'(t)}{(t-r)^s}\integralde t.
	\end{align}
	
	 Now we integrate both sides of \eqref{eq:INTDIFFtwo} with respect to the variable $r$ and apply \cref{lem:estimateForFracDerivative} part $\ref{item:decreasing_case}$ to obtain that for some $\CdimspotentialUNCHG=\CdimspotentialUNCHG(\dimension,s,\potential)>0$	 
	 \begin{align}\label{eq:INTDIFFthree}
	 	2\CdimspotentialUNCHG\int_{\rho}^{\mintworone}\ufunction(r)^{\frac{\dimension-s}{\dimension}}\integralde r\leq \ufunction(\rho),
	 \end{align}
 	for every $1<\rho<\mintworone$.
 	
 	Recalling \eqref{eq:Loneestimate}, upon choosing $\Cdimspotential$ larger and $\soglia$ smaller, we can assume
 	\begin{align}\label{eq:nottofar}
 		1+\frac{(2\ufunction(1))^{\frac{s}{\dimension}}\dimension}{s\CdimspotentialUNCHG}\leq
 		1+\Cdimspotential\eps^{\frac{s}{\dimension}}<2.
 	\end{align}
 	Without loss of generality we can also assume that
 	\begin{equation}\label{eq:smallerthanrone}
 		1+\frac{(2\ufunction(1))^{\frac{s}{\dimension}}\dimension}{s\CdimspotentialUNCHG}<\rone,
 	\end{equation} otherwise \eqref{eq:nottofar} immediately implies \eqref{eq:GoalOuterEstimate}.

 	\eqref{eq:INTDIFFthree}, \eqref{eq:nottofar}, \eqref{eq:smallerthanrone} allow us to invoke \cref{lem:CompLikeDNRV} part $\ref{item:DNRVdecreasing_case}$ to deduce that
 	\begin{equation}\label{eq:ufunctionNull}
 		\ufunction\tonde*{1+\frac{(2\ufunction(1))^{\frac{s}{\dimension}}\dimension}{s\CdimspotentialUNCHG}}=0.
 	\end{equation} From \eqref{eq:ufunctionNull}, recalling \eqref{eq:nottofar} and \eqref{eq:defrone}, we obtain that $\rone\leq1+\frac{(2\ufunction(1))^{\frac{s}{\dimension}}\dimension}{s\CdimspotentialUNCHG}\leq 1+\Cdimspotential\eps^{\frac{s}{\dimension}}$.
 	
\newcommand{\wfunction}{w}
\newcommand{\rtwo}{r_{2}}
\newcommand{\competitorTwo}{G}
\newcommand{\dilationTwo}{\mu}
\textbf{STEP 2: proof of the inclusion $\BallRadius{1-\rzero}\subset\minimizer$.}

Thanks to the previous step, upon choosing $\volumezero=\volumezero(\dimension, s,\potential)$, we can assume 
	\begin{equation}\label{eq:technicalMinimizerUnifBounded}
		\minimizer\subset\BallRadius{2}.
	\end{equation}
For every $r>0$ we define the non-negative and monotone non-decreasing absolutely continuous function
\begin{equation*}
	\wfunction(r)\coloneqq|\BallRadius{r}\setminus\minimizer|.
\end{equation*}
Thanks to the monotonicity and the hypotheses we observe that for every $0\leq r\leq 1$
\begin{equation}\label{eq:LoneestimateTwo}
	\wfunction(r)\leq\wfunction(1)\leq\Cdimspotential\eps.
\end{equation}

We observe that $0\in\set*{r\in[0,1) : \wfunction(r)=0}$ and we let
\begin{equation}\label{eq:defrtwo}
	\rtwo\coloneqq\sup\set*{r\in[0,1) : \wfunction(r)=0}\in[0,1].
\end{equation}
Our goal is to prove that
\begin{equation}\label{eq:GoalInnerEstimate}
	1-\rtwo\leq \Cdimspotential \eps^{s/\dimension}.
\end{equation}

Without loss of generality we suppose that $\rtwo<1$, otherwise \eqref{eq:GoalInnerEstimate} trivialises itself, and we fix $r>0$ such that $\rtwo<r<1$.
By definition of $\rtwo$ we have that $|\minimizer\cup\BallRadius{r}|>|\minimizer|$.

We consider the following competitor (with volume equal to $|\minimizer|=|\BallRadius{1}|$) for the volume constrained problem,
\begin{equation*}
	\competitorTwo\coloneqq\dilationTwo(\minimizer\cup\BallRadius{r}),
\end{equation*}
with
\begin{equation*}
	\dilationTwo=\dilationTwo(r)\coloneqq\tonde*{\frac{|\minimizer|}{|\minimizer\cup\BallRadius{r}|}}^{\frac{1}{\dimension}}<1.
\end{equation*}
After a computation, thanks to $\eqref{eq:LoneestimateTwo}$, we observe that $\dilationTwo$ is arbitrarily close to $1$ (upon choosing $\soglia$ small enough). In particular (upon choosing $\soglia$ small enough) we get
\begin{equation}\label{eq:dilationEstimateTwo}
	0<1-\dilationTwo\leq\Cdim\wfunction(r)
\end{equation}
and, since $r<1$,
\begin{equation*}
	\competitor\subseteq\BallRadius{\dilationTwo r}\subseteq\BallRadius{1}.
\end{equation*}
At this point we make use of the minimality of $\minimizer$ (see \eqref{eq:RescaledSetSolvesRescPB}) to write
\begin{align*}
	&\Ps(\minimizer) + \int_{\minimizer}\volScaledPotential(x)\integralde x\\
	&\leq \Ps(\competitorTwo) + \int_{\competitorTwo}\volScaledPotential(x)\integralde x=\dilationTwo^{\dimension-s}\Ps(\minimizer\cup\BallRadius{r}) + \dilationTwo^{\dimension}\int_{\minimizer\cup\BallRadius{r}}\volScaledPotential(\dilationTwo x)\integralde x
\end{align*}
Now, we recall \cite[Lemma 2.1]{DNRV15} and making use of the isoperimetric inequality for $\Ps$, the invariance under complementation of $\Ps$ and the fact that $\comp{\minimizer}=(\BallRadius{r}\setminus\minimizer)\sqcup\comp{(\BallRadius{r}\cup\minimizer)}$ we obtain
\begin{align}
	&\Cdims(\wfunction(r))^{\frac{\dimension-s}{\dimension}}
	\leq\Ps(\BallRadius{r}\setminus\minimizer) \nonumber\\
	&=\Ps(\comp{\minimizer})-\Ps(\comp{(\minimizer\cup\BallRadius{r})})+2\integral{\BallRadius{r}\setminus\minimizer}\integral{\comp{(\minimizer\cup\BallRadius{r})}}\frac{1}{\abs{x-y}^{\dimension+s}}\integralde x\de y\nonumber\\
	&\leq(\dilationTwo^{\dimension-s}-1)\Ps(\minimizer\cup\BallRadius{r})\label{eq:termFractionalPerimeterTwo}\\
	&+\dilationTwo^{\dimension}\int_{\minimizer\cup\BallRadius{r}}\volScaledPotential(\dilationTwo x)\integralde x-\int_{\minimizer}\volScaledPotential(x)\integralde x \label{eq:termPotentialTwo}\\
	&+2\integral{\BallRadius{r}\setminus\minimizer}\integral{\comp{(\minimizer\cup\BallRadius{r})}}\frac{1}{\abs{x-y}^{\dimension+s}}\integralde x\de y.\label{eq:termDoubleIntegralTwo}
\end{align}

We first observe that \eqref{eq:termFractionalPerimeterTwo} is negative.

We then estimate \eqref{eq:termPotentialTwo} using the fact that $\potential$ is locally Lipschitz (see \eqref{H:locallyLipschitz}) and non-negative (see \eqref{eq:Minofg}) together with \eqref{eq:technicalMinimizerUnifBounded}, \eqref{eq:dilationEstimateTwo}, \eqref{eq:technicalSmallLength} and the fact that $r<2$:
	\begin{align*}
			&\dilationTwo^{\dimension}\int_{\minimizer\cup\BallRadius{r}}\volScaledPotential(\dilationTwo x)\integralde x-\int_{\minimizer}\volScaledPotential(x)\integralde x \\
			&=(\dilationTwo^{\dimension}-1)\int_{\minimizer\cup\BallRadius{r}}\volScaledPotential(\dilationTwo x)\integralde x+\int_{\minimizer}(\volScaledPotential(\dilationTwo x)-\volScaledPotential(x))\integralde x +\int_{\BallRadius{r}\setminus\minimizer}\volScaledPotential(\dilationTwo x)\integralde x \\
			&\leq (1-\dilationTwo)2\Lip(\potential;\BallRadius{2+\Rzero}) + \wfunction(r) \sup_{\BallRadius{1+\Rzero}}\potential\\
			&\leq \Cdimspotential\wfunction(r).
	\end{align*}
	
	Now we deal with \eqref{eq:termDoubleIntegralTwo}. Following th scheme of the proof of \cite[equation (15)]{CesNov17}, we write
	
	\begin{align*}
		&\integral{\BallRadius{r}\setminus\minimizer}\integral{\comp{(\minimizer\cup\BallRadius{r})}}\frac{1}{\abs{x-y}^{\dimension+s}}\integralde x\de y
		\leq\integral{\BallRadius{r}\setminus\minimizer}\integral{\comp{\BallRadius{r}}}\frac{1}{\abs{x-y}^{\dimension+s}}\integralde x\de y\\
		&\leq\integral{\BallRadius{r}\setminus\minimizer}\integral{\Rdim\setminus\BallRadiusCenter{r-|y|}{y}}\frac{1}{\abs{x-y}^{\dimension+s}}\integralde x\de y
	    \leq\integral{\BallRadius{r}\setminus\minimizer}\integral{\Rdim\setminus\BallRadius{r-|y|}}\frac{1}{\abs{z}^{\dimension+s}}\integralde z\de y\\
	    &\leq\dimension\omegadim\integral{\BallRadius{r}\setminus\minimizer}\int_{r-|y|}^{+\infty}\frac{1}{\rho^{1+s}}\integralde \rho\de y=\frac{\dimension\omegadim}{s}\integral{\BallRadius{r}\setminus\minimizer}\frac{1}{(r-|y|)^s}\integralde y\\
	    &=\frac{\dimension\omegadim}{s}\int_{0}^{r}\frac{1}{(r-t)^s}\HdimMinusOne(\partial\BallRadius{t}\setminus\minimizer)\integralde t\\
		&=\dimOmegadimOvers\int_{0}^{r}\frac{\wfunction'(t)}{(r-t)^s}\integralde t.
	\end{align*}
	
	Gathering all the previous inequalities together, we find that for every $r$ s.t. $\rtwo<r<1$,
		\begin{align}\label{eq:INTDIFFoneSTEPTWO}
			\Cdims(\wfunction(r))^{\frac{\dimension-s}{\dimension}}\leq \Cdimspotential\wfunction(r)+\dimOmegadimOvers\int_{0}^{r}\frac{\wfunction'(t)}{(r-t)^s}\integralde t.
		\end{align}
		Now, recalling \eqref{eq:LoneestimateTwo}, upon choosing $\soglia$ smaller, we can rewrite \eqref{eq:INTDIFFoneSTEPTWO} as
		\begin{align}\label{eq:INTDIFFtwoSTEPTWO}
			\Cdimspotential(\wfunction(r))^{\frac{\dimension-s}{\dimension}}\leq \dimOmegadimOvers\int_{0}^{r}\frac{\wfunction'(t)}{(r-t)^s}\integralde t.
		\end{align}
		
		 Now we integrate both sides of \eqref{eq:INTDIFFtwoSTEPTWO} with respect to the variable $r$ and apply \cref{lem:estimateForFracDerivative} part $\ref{item:increasing_case}$ to obtain that for some $\CdimspotentialUNCHG=\CdimspotentialUNCHG(\dimension,s,\potential)>0$	 
		 \begin{align}\label{eq:INTDIFFthreeSTEPTWO}
		 	2\CdimspotentialUNCHG\int_{\rtwo}^{\rho}\wfunction(r)^{\frac{\dimension-s}{\dimension}}\integralde r\leq \wfunction(\rho),
		 \end{align}
	 	for every $\rtwo<\rho<1$.
	 	
	 	Recalling \eqref{eq:LoneestimateTwo}, upon choosing $\Cdimspotential$ larger and $\soglia$ smaller, we can assume
	 	\begin{align}\label{eq:nottofarSTEPTWO}
	 		1-\frac{(2\wfunction(1))^{\frac{s}{\dimension}}\dimension}{s\CdimspotentialUNCHG}\geq
	 		1-\Cdimspotential\eps^{\frac{s}{\dimension}}>0.
	 	\end{align}
	 	Without loss of generality we can also assume that
	 	\begin{equation}\label{eq:smallerthanroneSTEPTWO}
	 		1-\frac{(2\wfunction(1))^{\frac{s}{\dimension}}\dimension}{s\CdimspotentialUNCHG}>\rtwo,
	 	\end{equation} otherwise \eqref{eq:nottofarSTEPTWO} immediately implies \eqref{eq:GoalInnerEstimate}.

	 	\eqref{eq:INTDIFFthreeSTEPTWO} and \eqref{eq:smallerthanroneSTEPTWO} allow us to invoke \cref{lem:CompLikeDNRV} part $\ref{item:DNRVincreasing_case}$ to deduce that
	 	\begin{equation}\label{eq:wfunctionNull}
	 		\wfunction\tonde*{1-\frac{(2\wfunction(1))^{\frac{s}{\dimension}}\dimension}{s\CdimspotentialUNCHG}}=0.
	 	\end{equation} From \eqref{eq:wfunctionNull}, recalling \eqref{eq:nottofarSTEPTWO} and \eqref{eq:defrtwo}, we obtain that $\rtwo\geq1-\frac{(2\wfunction(1))^{\frac{s}{\dimension}}\dimension}{s\CdimspotentialUNCHG}\geq 1-\Cdimspotential\eps^{\frac{s}{\dimension}}$.
\end{proof}
\end{@empty}

\section{Unconstrained Minimization Problem}\label{sec:UncoMinProblem}
Now we show that our minimization problem \eqref{MinimizationNonlocalProblem}
is equivalent to the rescaled and unconstrained problem 
\begin{equation*}
	\min\left\{ \EucEnergysScaledPotentialVolume{\mu}(E) \coloneqq \Ps(E) + \int_{E}\volScaledPotential(x)\integralde x + \mu ||E| - |B_1|| \SetSep E\subset \Rdim \right\}
\end{equation*}
for $\mu>0$ sufficiently large. Here we recall that $\volScaledPotential$ is defined by \eqref{eq:Def_VolScaledPotential}, that is, 
\begin{equation*}
	\volScaledPotential(x) \coloneqq \length^s\,\potential(\length\,x + \xvolume)
\end{equation*}
for $x \in \Rdim$ where $\length$ is defined by \eqref{eq:defOfLength} and $\xvolume$ is given in \cref{prop:properties_of_minimizers}.

\begin{proposition}[Uniform version of Lemma 3.4 in \cite{CesNov17}]\label{prop:Solutions_of_Penalized_Problem(Uniform_version)}
		\newcommand{\penalZero}{\mu_{0}}
		\newcommand{\Rraggio}{3}	
		Let $s\in(0,1)$ and assume that $\potential$ satisfies \ref{H:coercive}, \ref{H:locallyLipschitz} and \eqref{eq:Minofg}. For every $\volume>0$ let $\Evol$ be a minimizer of $\EucEnergysPotential$ with $|\Evol|=\volume$ and let $\EtildeVol$ be as in \eqref{eq:Def_Rescaled_Minimizer}.
		There exist $\volumezero=\volumezero(\dimension, s,\potential)>0$ and $\penalZero=\penalZero(\dimension,s,\potential)>0$ such that if $0<\volume\leq\volumezero(\dimension, s,\potential)$, then,
		\begin{equation*}
			\EtildeVol\subset\BallRadius{\Rraggio/2},
		\end{equation*}
		and 
		\begin{equation}\label{eq:RescaledSetSolvesRescPBWithPenaliz}
				\EtildeVol\in\argmin\left\{ \Ps(F) + \int_{F}\volScaledPotential(x)\integralde x +\penal\abs*{|F|-|\Bone|}\SetSep  F\subset\BallRadius{\Rraggio}\right\},
			\end{equation}
		for every $\penal\geq\penalZero$, where $\volScaledPotential$ is defined by \eqref{eq:Def_VolScaledPotential}.
\end{proposition}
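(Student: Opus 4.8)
The plan is to run the penalization argument of \cite[Lemma 3.4]{CesNov17}, taking care that every constant produced depends only on $\dimension$, $s$ and $\potential$; this uniformity is precisely what \cref{prop:properties_of_minimizers} and \cref{thm:Uniform_closeness_of_minimizers_to_ball} are there to supply. First I would dispose of the inclusion $\EtildeVol\subset\BallRadius{3/2}$: by \cref{thm:Uniform_closeness_of_minimizers_to_ball} one has $\EtildeVol\subset\BallRadius{1+\rzero}$ with $0<\rzero\le C(\dimension,s,\potential)\volume^{s^2/(2\dimension^2)}$, so it suffices to shrink $\volumezero=\volumezero(\dimension,s,\potential)$ so that $\rzero<1/2$ for every $\volume\le\volumezero$. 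In particular $\EtildeVol\subset\BallRadius{3}$ is then an admissible competitor for the penalized problem, and since $|\EtildeVol|=|\Bone|$ (recall \eqref{eq:Def_Rescaled_Minimizer} and $\BallVolume{\volume}=\BallRadius{\length}$) the penalization term vanishes on $\EtildeVol$, i.e. $\EucEnergysScaledPotentialVolume{\penal}(\EtildeVol)=\Ps(\EtildeVol)+\int_{\EtildeVol}\volScaledPotential\integralde x$ for every $\penal$.

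Next I would record the a priori bound
\[
	\Ps(\EtildeVol)+\int_{\EtildeVol}\volScaledPotential\integralde x\le M\coloneqq \Ps(\Bone)+|\Bone|\,\sup_{\BallRadius{1+\Rzero}}\potential,
\]
which follows by testing the rescaled constrained problem \eqref{eq:RescaledSetSolvesRescPB} with $\Bone$ and using $|\xvolume|\le\Rzero$, $\length<1$ and $\potential\ge 0$ (see \eqref{eq:Minofg}); here $M=M(\dimension,s,\potential)$ since $\Rzero=\Rzero(\dimension,\potential)$. Now fix an arbitrary $F\subset\BallRadius{3}$ and $\penal\ge\mu_0$, with $\mu_0$ to be chosen at the end. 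If $\EucEnergysScaledPotentialVolume{\penal}(F)\ge\EucEnergysScaledPotentialVolume{\penal}(\EtildeVol)$ there is nothing to prove, so I assume $\EucEnergysScaledPotentialVolume{\penal}(F)\le\EucEnergysScaledPotentialVolume{\penal}(\EtildeVol)\le M$, which already forces $\Ps(F)\le M$ and $\penal\,\abs*{|F|-|\Bone|}\le M$; hence, provided $\mu_0\ge 2M/|\Bone|$, we get $\abs*{|F|-|\Bone|}\le|\Bone|/2$, i.e. $|F|\in[\,|\Bone|/2,\,3|\Bone|/2\,]$.

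The heart of the matter is the dilation competitor $\bar F\coloneqq\lambda F$ with $\lambda\coloneqq(|\Bone|/|F|)^{1/\dimension}$: by the previous step $\lambda$ ranges in a fixed compact subinterval of $(0,+\infty)$, we have $|\bar F|=|\Bone|$, and $\bar F$ is admissible for \eqref{eq:RescaledSetSolvesRescPB} (whose constraint is only on the volume, so $\bar F$ need not lie in $\BallRadius{3}$). Using $\Ps(\lambda F)=\lambda^{\dimension-s}\Ps(F)$, $\int_{\lambda F}\volScaledPotential\integralde x=\lambda^{\dimension}\int_F\volScaledPotential(\lambda x)\integralde x$, the elementary estimate $\abs{1-\lambda}\le C(\dimension)\abs*{|F|-|\Bone|}$, the bound $\Ps(F)\le M$, and the local Lipschitz continuity of $\potential$ on $\BallRadius{6+\Rzero}$ — which, thanks to $|\xvolume|\le\Rzero$ and $\length<1$, keeps all the relevant arguments of $\potential$ inside $\BallRadius{6+\Rzero}$ and lets one control $(\lambda^{\dimension}-1)\int_F\volScaledPotential(\lambda x)\integralde x$ and $\int_F\abs{\volScaledPotential(\lambda x)-\volScaledPotential(x)}\integralde x$ (the latter picking up an extra factor $\length\le 1$) — a routine first-order expansion in $\lambda-1$ gives
\[
	\Big(\Ps(\bar F)+\int_{\bar F}\volScaledPotential\integralde x\Big)-\Big(\Ps(F)+\int_F\volScaledPotential\integralde x\Big)\le C_1\,\abs*{|F|-|\Bone|}
\]
for some $C_1=C_1(\dimension,s,\potential)$. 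Testing the minimality \eqref{eq:RescaledSetSolvesRescPB} of $\EtildeVol$ against $\bar F$ then yields
\[
	\EucEnergysScaledPotentialVolume{\penal}(\EtildeVol)=\Ps(\EtildeVol)+\int_{\EtildeVol}\volScaledPotential\integralde x\le\Ps(\bar F)+\int_{\bar F}\volScaledPotential\integralde x\le\Ps(F)+\int_F\volScaledPotential\integralde x+C_1\,\abs*{|F|-|\Bone|},
\]
so that choosing $\mu_0\coloneqq\max\{2M/|\Bone|,\,C_1\}$ — which depends only on $\dimension$, $s$, $\potential$ — gives $\EucEnergysScaledPotentialVolume{\penal}(\EtildeVol)\le\EucEnergysScaledPotentialVolume{\penal}(F)$ for all $\penal\ge\mu_0$, i.e. \eqref{eq:RescaledSetSolvesRescPBWithPenaliz}.

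The only genuinely delicate point is the penultimate displayed inequality: its derivation is an elementary expansion, but one must check that each constant comparing perimeters and potential energies is independent of $\volume$ and of the particular minimizer. That is precisely where $|\xvolume|\le\Rzero(\dimension,\potential)$ from \cref{prop:properties_of_minimizers}, the smallness $\length<1$, and the a priori bound $\Ps(F)\le M$ (legitimate once we restrict to competitors with energy not exceeding that of $\EtildeVol$) enter; beyond this bookkeeping I do not expect any real obstruction.
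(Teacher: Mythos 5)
Your proof is correct and follows essentially the same route as the paper's: both bound the energy of $\EtildeVol$ a priori, pass to a dilation $\lambda F$ (with $\lambda$ confined to a fixed compact interval) so as to compare with the constrained rescaled problem, and show the dilation changes the energy by at most $C_1\,\abs*{|F|-|\Bone|}$ using the scaling of $P_s$, the local Lipschitz bound on $\potential$, the uniform translation bound $|\xvolume|\le\Rzero$, and $\length<1$. The only cosmetic difference is that the paper argues by contradiction (dividing by $\abs*{|G_\mu|-|\Bone|}$ after checking it is nonzero), whereas your direct version avoids this division by keeping the estimate in the form $\text{(energy after dilation)}-\text{(energy before)}\le C_1\,\abs*{|F|-|\Bone|}$; the two are equivalent.
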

\begin{proof}

		\newcommand{\nk}{{n_k}}
		
		\newcommand{\Gtildepen}{\widetilde{G}_{\penal}}
		\newcommand{\lambdapen}{\lambda_{\penal}}
		\newcommand{\lipnk}{\varphi_\nk} 
		\newcommand{\intSetFunctionDex}[2]{\int_{#1}{#2}\thinspace\mathrm{d}x}
		\newcommand{\di}{\mathop{}\!\mathrm{d}}
		\newcommand{\Rraggio}{3}
		\newcommand{\RraggioDoppio}{6}
		\newcommand{\penalZero}{\mu_{0}}
		\newcommand{\V}[1]{\integral{#1}\volScaledPotential(x)\integralde x}
		\newcommand{\Gpen}{G_{\penal}}
		\newcommand{\clocale}{c}
		\newcommand{\clocaleEsplicita}{\Ps(\BallRadius{1})+(\norm{\potential}_{\Lspace{\infty}(\BallRadius{\Rraggio+\Rzero})})2|\BallRadius{\Rraggio}|}
		\newcommand{\clocaleOne}{{\varepsilon}_{1}}
		\newcommand{\clocaleTwo}{\varepsilon_{2}}
		\newcommand{\varmuta}{\lambda}
		\newcommand{\cBrutta}{c'}
		\newcommand{\cBruttaEsplicita}{\quadre*{2\clocale\frac{(\dimension-s)}{\dimension\omegadim}+\Lip(\potential;\BallRadius{\RraggioDoppio+\Rzero})\frac{2\diam(\BallRadius{\Rraggio})}{\dimension}}(1+\clocaleOne)^\dimension+\norm{\potential}_{\Lspace{\infty}(\BallRadius{\Rraggio+\Rzero})}}
		
		We revisit the proof of \cite[Lemma 3.4]{CesNov17} making use of the extra information given by \cref{thm:Uniform_closeness_of_minimizers_to_ball} and tracking carefully all the constants.
		
		In this proof $\clocale=\clocale(\dimension, s,\potential)>0, \cBrutta=\cBrutta(\dimension, s,\potential)>0$ are fixed constants defined by,
		\begin{align*}
		&\clocale=\clocaleEsplicita,\\
		&\cBrutta=\cBruttaEsplicita,
		\end{align*}
		where $\Rzero=\Rzero(\dimension, s,\potential)$ is as in the statement of \cref{prop:properties_of_minimizers}.
		
		Let $\clocaleOne=\clocaleOne(\dimension, s)\in(0,1/2)$ be a small constant such that
		\begin{align}\label{eq:technical_elementary_inequality}
		&\abs*{\frac{\varmuta-1}{\varmuta^\dimension-1}}\leq\frac{2}{\dimension} &\abs*{\frac{\varmuta^{\dimension-s}-1}{\varmuta^\dimension-1}}\leq\frac{2(\dimension-s)}{\dimension}
		\end{align}
		for all $\varmuta>0$ with $|\varmuta-1|\leq\clocaleOne$.
		
		Let $\clocaleTwo=\clocaleTwo(\dimension, s)>0$ be a constant such that $\clocaleTwo<\min \graffe*{1-\frac{1}{(1+\clocaleOne)^{\dimension}},\frac{1}{(1-\clocaleOne)^{\dimension}}-1}$. 
		
		Let $\penalZero=\penalZero(\dimension,s,\potential)>0$ be a constant such that $\penalZero>\max\graffe*{\frac{\clocale}{\clocaleTwo|\BallRadius{1}|},\cBrutta}$.

		We fix $\volumezero=\volumezero(\dimension, s,\potential)\in(0,1)$ as in the statement of \cref{thm:Uniform_closeness_of_minimizers_to_ball} (or smaller)  and $\volume$ such that $0<\volume\leq\volumezero(\dimension, s,\potential)$. 
		
		We fix  $\penal\geq\penalZero$ and we assume by contradiction that  $\exists \Gpen\subseteq\BallRadius{\Rraggio}$ of finite $s$-perimeter such that
			\begin{align}\label{key'}
			&\Ps(\Gpen)+\V{\Gpen}+\penal\Big| |\Gpen|-|\BallRadius{1}|\Big|<\Ps(\EtildeVol)+\V{\EtildeVol}
			\end{align}
		Then, since $\EtildeVol$ solves \eqref{eq:RescaledSetSolvesRescPB},
			\begin{align*}
			&\Ps(\Gpen)+\penal\Big| |\Gpen|-|\BallRadius{1}|\Big|
			\leq\Ps(\EtildeVol)+\V{\EtildeVol}-\V{\Gpen}\leq\\
			&\leq\Ps(\BallRadius{1})+\V{\BallRadius{1}}-\V{\Gpen}\leq\\
			&\leq \Ps(\BallRadius{1})+(\norm{\volScaledPotential}_{\Lspace{\infty}(\BallRadius{\Rraggio})})|\Gpen\symmDiff \BallRadius{1}|\leq \clocaleEsplicita=\clocale.
			\end{align*}
			
			Which, implies
			\begin{align*}
			&	\Ps(\Gpen)\leq\clocale & \abs*{|\Gpen|-|\BallRadius{1}|}\leq\frac{\clocale}{\penal}\leq\frac{\clocale}{\penalZero}\leq\clocaleTwo|\BallRadius{1}|.
			\end{align*}
			In particular, $\frac{|\BallRadius{1}|}{(1+\clocaleOne)^\dimension}\leq|\Gpen|\leq\frac{|\BallRadius{1}|}{(1-\clocaleOne)^\dimension}$. We observe that $|\Gpen|\neq|\BallRadius{1}|$, since $\EtildeVol$ solves \eqref{eq:RescaledSetSolvesRescPB} and \eqref{key'} holds.

			At this point let $\Gtildepen:=\lambdapen \Gpen$, with $\lambdapen:=(|\BallRadius{1}|/|\Gpen|)^{1/\dimension}$. Being the volume of $\Gtildepen$ equal to $|\BallRadius{1}|$, by minimality of $\EtildeVol$ for \eqref{eq:RescaledSetSolvesRescPB}, we get:
			\begin{align}\label{key2}
			\Ps(\EtildeVol)+\V{\EtildeVol}\leq \Ps(\Gtildepen)+\V{\Gtildepen}.
			\end{align}

			The combination between \eqref{key'} and \eqref{key2} leads to:
			\begin{align}\label{key3}
			\penal\leq\frac{\Ps(\Gtildepen)-\Ps(\Gpen)}{\left||\Gpen|-|\BallRadius{1}|\right|}+\frac{\V{\Gtildepen}-\V{\Gpen}}{\left||\Gpen|-|\BallRadius{1}|\right|}
			\end{align}
	
			Applying exactly the same argument in the proof of \cite[Lemma 3.4]{CesNov17}, we obtain:

		\begin{equation}\label{conclude1}
		\frac{\Ps(\Gtildepen)-\Ps(\Gpen)}{\left||\Gpen|-|\BallRadius{1}|\right|}=\frac{(\lambdapen^{\dimension-s}-1)\Ps(\Gpen)}{|\lambdapen^{\dimension}-1||\Gpen|},
		\end{equation}
		and
		\begin{align}\label{conclude1bis}
		&\nonumber\frac{\V{\Gtildepen}-\V{\Gpen}}{\left||\Gpen|-|\BallRadius{1}|\right|}\\
		&=\frac{\V{\Gtildepen}-\lambdapen^\dimension\V{\Gpen}}{\left||\Gpen|-|\BallRadius{1}|\right|}+\frac{(\lambdapen^\dimension-1)\V{\Gpen}}{\left||\Gpen|-|\BallRadius{1}|\right|}\\
		&\nonumber\leq \frac{|\BallRadius{1}|\Lip(\potential;\BallRadius{\RraggioDoppio+\Rzero})|\lambdapen-1|\diam(\BallRadius{\Rraggio})}{|\lambdapen^{\dimension}-1||\Gpen|}+\norm{\potential}_{\Lspace{\infty}(\BallRadius{\Rraggio+\Rzero})}.
		\end{align}

			By \eqref{eq:technical_elementary_inequality}, \eqref{key3}, \eqref{conclude1}, \eqref{conclude1bis} we deduce that

			\begin{align*}
			\penal\leq\cBrutta<\penalZero,
			\end{align*}
			 that is the desired contradiction.

\end{proof}

\section{Regularity and Convexity of Minimizers}\label{sec:Regular_Convex_Minimizers} \label{sectionRegularityConvexityMinimizers}
In this section, our goal is to show the following lemma:
\begin{lemma}\label{lemmaRegularityConvexityMinimizers}
	Let $s\in(0,1)$ and assume that $\potential$ satisfies \ref{H:coercive}, \ref{H:locallyLipschitz} and \eqref{eq:Minofg}. For every $\volume>0$ let $\Evol$ be a minimizer of $\EucEnergysPotential$ with $|\Evol|=\volume$ and let $\EtildeVol$ be as in \eqref{eq:Def_Rescaled_Minimizer}. Then, there exists $\widetilde{m}_0>0$, which depends only on $\dimension$, $s$ and $\potential$, such that, for any $\volume \in (0,\,\widetilde{m}_0)$ the boundary of $\EtildeVol$ is of class $C^{2,\alpha}$ for any $\alpha \in (0,\,s)$. Moreover, $\partial \EtildeVol$ converges to $\partial B_1$ in $C^{2}$-sense, in particular, $\EtildeVol$ is convex for any $\volume \in (0,\,\widetilde{m}_0)$.
\end{lemma}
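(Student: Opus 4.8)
The plan is to combine the uniform proximity result of \cref{thm:Uniform_closeness_of_minimizers_to_ball} with the unconstrained reformulation of \cref{prop:Solutions_of_Penalized_Problem(Uniform_version)}, so that the rescaled minimizers $\EtildeVol$ become genuine $\ConstAlmostMin$-minimizers of the $s$-perimeter with a constant $\ConstAlmostMin$ that is \emph{independent} of $\volume$, and then invoke the regularity theory for such almost minimizers from \cite{FFMMM15,BFV14}. First I would fix $\volume\le\widetilde m_0$ with $\widetilde m_0$ smaller than the thresholds appearing in \cref{thm:Uniform_closeness_of_minimizers_to_ball} and \cref{prop:Solutions_of_Penalized_Problem(Uniform_version)}, so that $\EtildeVol\subset\BallRadius{3/2}$ and $\EtildeVol$ minimizes $\Ps(F)+\int_F\volScaledPotential\,\de x+\penal\abs*{|F|-|\Bone|}$ among $F\subset\BallRadius{3}$ for every $\penal\ge\penalZero(\dimension,s,\potential)$. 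Since $\volScaledPotential$ is bounded on $\BallRadius{3}$ uniformly in $\volume\le\volumezero$ (it is $\length^s\potential(\length x+\xvolume)$ with $|\xvolume|\le\Rzero$ and $\length<1$, so $\norm{\volScaledPotential}_{\Lspace\infty(\BallRadius{3})}\le C(\dimension,s,\potential)$), a standard comparison argument (compare $\EtildeVol$ with $\EtildeVol\triangle B_r(x_0)$ and control the volume term by $\penal$ plus the $\Lspace\infty$ norm of the potential) shows that $\EtildeVol$ is a $\ConstAlmostMin$-minimizer of $\Ps$ in the sense of \cite{FFMMM15}, with $\ConstAlmostMin=\ConstAlmostMin(\dimension,s,\potential)$ and at a scale $\rzero=\rzero(\dimension,s,\potential)$, both independent of $\volume$.

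Next I would apply \cite[Corollary 3.6]{FFMMM15} (or the equivalent statement in \cite{BFV14}): $\ConstAlmostMin$-minimizers of the fractional perimeter have boundary of class $C^{1,\beta}$ for some $\beta=\beta(\dimension,s)>0$, away from a singular set of Hausdorff dimension at most $\dimension-3$; in our regime the singular set is empty because, by \cref{thm:Uniform_closeness_of_minimizers_to_ball}, $\BallRadius{1-\rzero}\subset\EtildeVol\subset\BallRadius{1+\rzero}$ with $\rzero\le C\volume^{s^2/(2\dimension^2)}\to0$, so for $\widetilde m_0$ small the density estimates force every boundary point to be regular (the set is trapped in a thin annulus and cannot develop singularities). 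Once $\partial\EtildeVol\in C^{1,\beta}$, the Euler--Lagrange equation \eqref{eulerLagrangeEqFrMc}, namely $\sMeanCurvature{\EtildeVol}+\volScaledPotential=\lambda$ on $\partial\EtildeVol$ in a viscosity/weak sense, together with the bootstrap scheme for integro-differential equations in \cite{BFV14} (using that $\volScaledPotential$ is locally Lipschitz, hence $C^{0,1}$, uniformly on $\BallRadius{3}$) upgrades the regularity: $C^{1,\beta}\Rightarrow\partial\EtildeVol\in C^{2,\alpha}$ for every $\alpha<s$, with the $C^{2,\alpha}$ norm bounded uniformly in $\volume\le\widetilde m_0$.

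Finally, for the $C^2$-convergence to $\partial B_1$ I would argue by compactness: the uniform $C^{2,\alpha}$ bound gives precompactness in $C^2$ of the family $\partial\EtildeVol$ as $\volume\downarrow0$; any subsequential limit is a $C^{2,\alpha}$ hypersurface enclosing volume $|\Bone|$ which, by \cref{thm:Uniform_closeness_of_minimizers_to_ball}, is trapped in every annulus $\BallRadius{1+\rzero}\setminus\BallRadius{1-\rzero}$ and hence equals $\partial B_1$; since the limit is unique, the whole family converges to $\partial B_1$ in $C^2$. Because $\partial B_1$ has strictly positive principal curvatures, $C^2$-closeness implies $\EtildeVol$ has strictly positive second fundamental form for $\volume<\widetilde m_0$, hence $\EtildeVol$ is convex, and by scaling/translation so is $\Evol$. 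The main obstacle I anticipate is the second step: verifying carefully that the uniform $\ConstAlmostMin$-minimality hypothesis of \cite{FFMMM15} genuinely holds with $\volume$-independent constants — in particular checking that the penalization constant $\penalZero$ and the potential bound can be absorbed into a single $\ConstAlmostMin=\ConstAlmostMin(\dimension,s,\potential)$ valid at a fixed scale — and then running the \cite{BFV14} bootstrap with the correct (limited, $\alpha<s$) Hölder exponent rather than over-claiming smoothness.
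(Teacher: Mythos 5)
Your plan follows essentially the same route as the paper: pass to the penalized unconstrained problem (\cref{prop:Solutions_of_Penalized_Problem(Uniform_version)}) to obtain $(\Lambda,\delta)$-minimality of $\EtildeVol$ with $\Lambda$ independent of $\volume$ (this is the paper's \cref{res:UniformAlmostMinimizer}), apply the $\varepsilon$-regularity of \cite{FFMMM15} and the Hausdorff closeness to $\partial B_1$ to get a full $C^{1,\alpha}$ graph representation (\cref{lemmaConvergenceBoundaryMinimizers}), then bootstrap via the integro-differential Schauder theory of \cite{BFV14} (\cref{lemmaImprovedRegularity}), and conclude $C^2$-convergence. The one noteworthy difference is that you close the argument by compactness in $C^2$ from a uniform $C^{2,\alpha}$ bound, whereas the paper instead interpolates the $C^{2,\alpha}$ bound against the quantitative $L^\infty$ decay $\|\phi_m\|_{L^\infty}\le C\,\volume^{s^2/(2\dimension^2)}$ to get an explicit rate; both are valid and buy essentially the same conclusion.

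There is, however, a gap you should fill: you assert that the bootstrap yields a $C^{2,\alpha}$ norm \emph{bounded uniformly in $\volume$}, but the Schauder estimate from \cite{BFV14} (see \cref{theoremBootstrapBFV}) controls $\|u\|_{C^{1+s+\alpha}}$ in terms of $1+\|u\|_{L^\infty}+\|F\|_{L^\infty}$, where the right-hand side $F$ of the graph equation contains the Lagrange multiplier $\widetilde\lambda_m$ coming from the volume constraint in the Euler--Lagrange relation $\sMeanCurvature{\EtildeVol}+\volScaledPotential=\widetilde\lambda_m$. A priori $\widetilde\lambda_m$ could blow up as $\volume\downarrow 0$, in which case the $C^{2,\alpha}$ norms would not be uniformly bounded and the compactness step would fail. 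The paper handles this by substituting the vector field $T=\mathrm{Id}$ into the stationarity identity to derive
\begin{equation*}
\dimension|B_1|\,\widetilde\lambda_m=(\dimension-s)\Ps(\EtildeVol)+\int_{\EtildeVol}\nabla\volScaledPotential\cdot x\,\de x+\dimension\int_{\EtildeVol}\volScaledPotential\,\de x,
\end{equation*}
and then uses \cref{thm:Uniform_closeness_of_minimizers_to_ball} and the decay $\length^s\to 0$ to show $|\widetilde\lambda_m|\le C(\dimension,s,\potential)$ uniformly for small $\volume$. You need an argument of this type (or some other uniform bound on the multiplier) before you can invoke the uniform $C^{2,\alpha}$ bound and run the compactness step; the rest of your plan then goes through as in the paper.
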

We remark that Lemma \ref{lemmaRegularityConvexityMinimizers} actually implies that the latter part of Theorem \ref{theoremMainShapeMinimizers} is valid. Therefore, combining Lemma \ref{lemmaRegularityConvexityMinimizers} with Theorem \ref{thm:Uniform_closeness_of_minimizers_to_ball}, we conclude the proof of our main theorem, that is, Theorem \ref{theoremMainShapeMinimizers}.
 
In order to prove Lemma \ref{lemmaRegularityConvexityMinimizers}, we exploit the regularity results proved by Caputo and Guillen \cite{CapGui10}; Figalli, Fusco, Maggi, Millot, and Morini \cite{FFMMM15}; Savin and Valdinoci \cite{SavVal13}; and Barrios, Figalli, and Valdinoci \cite{BFV14}. Before recalling the regularity results, we first give the definition of the so called \enquote{almost} minimizers or $\Lambda$-minimizers for the $s$-fractional perimeter $\Ps$ based on the definition given by Figalli, Fusco, Maggi, Millot, and Morini \cite{FFMMM15} or Caputo and Guillen \cite{CapGui10}.
\begin{definition}[$(\Lambda, \delta)$-Minimizers or Almost Minimizers of $\Ps$] \label{definitionLocalALmostMinimizer}
	Let $s \in (0,\,1)$, $\Lambda>0$ and $\delta>0$. We say that a measurable set $E \subset \Rdim$ is a \textit{$(\Lambda,\delta)-$minimizer} of $\Ps$ if
	\begin{equation}\label{definitionAlmostMinimizer}
		\Ps(E;\BallRadiusCenter{r}{x}) \leq \Ps(F;\BallRadiusCenter{r}{x}) + \Lambda|E \symmDiff F|	
	\end{equation} 
	for any measurable set $F \subset \Rdim$, $x \in \partial E$, and $r\in(0,\delta)$ with $E \symmDiff F \subset B_r(x)$.
\end{definition}
Note that the similar concept of the almost minimality for $\Ps$, which is more general, was also given by Caputo and Guillen \cite{CapGui10}. See \cite{CapGui10} for the details.

Next we recall the regularity result on the almost minimizers or $(\Lambda, \delta)$-minimizers of $\Ps$ shown by Figalli, Fusco, Maggi, Millot, and Morini \cite[Corollary 3.5]{FFMMM15} (see also \cite{CapGui10}). This result
is a nonlocal analogue of the theory of Tamanini on almost minimal surfaces. 
\begin{theorem}[cf. Corollary 3.5 in \cite{FFMMM15}] \label{improvementFlatnessFFMMM15}
	If $\dimension \geq 2$, $\Lambda > 0$, and $s_0 \in (0,\,1)$, then there exist $0 < \varepsilon_0 < 1$, $C_0>0$, and $\alpha \in (0,\,1)$, depending on $n$, $\Lambda$, and $s_0$ only, with the following property: if $E$ is a $(\Lambda, \delta)-$minimizer of $\Ps$ for $s \in [s_0, \,1)$ in the sense of Definition \ref{definitionLocalALmostMinimizer} and 
	\begin{equation}\label{regularityCriterion}
		0 \in \partial E, \quad \partial E \cap B_1(0) \subset \{y \in \Rdim \SetSep |y \cdot e| < \varepsilon_0 \}
	\end{equation}
	for some $e \in \mathbb{S}^{\dimension-1}$, then $\partial E \cap B_{1/2}(0)$ is of class $C^{1,\alpha}$ and the $C^{1,\alpha}$-norm of its graph function is bounded by $C_0$.

\end{theorem}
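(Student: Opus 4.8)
\textbf{Proof plan for \cref{lemmaRegularityConvexityMinimizers}.}
The plan is to run the standard regularity-bootstrap programme for the rescaled minimizers $\EtildeVol$, using the uniform geometric control already obtained in \cref{thm:Uniform_closeness_of_minimizers_to_ball} and \cref{prop:Solutions_of_Penalized_Problem(Uniform_version)}. The starting point is to observe that, by \cref{prop:Solutions_of_Penalized_Problem(Uniform_version)}, for $\volume$ small enough $\EtildeVol$ is, restricted to $\BallRadius{3}$, a global minimizer of the penalized functional $F\mapsto \Ps(F)+\int_F \volScaledPotential + \penal\abs*{|F|-|\Bone|}$; from this I would deduce that $\EtildeVol$ is a $(\ConstAlmostMin,\delta)$-minimizer of $\Ps$ in the sense of \cref{definitionLocalALmostMinimizer}, with $\ConstAlmostMin$ and $\delta$ \emph{depending only on $\dimension$, $s$, $\potential$} (not on the minimizer): indeed competitors $F$ with $E\symmDiff F\subset B_r(x)$ change the potential term by at most $\norm{\volScaledPotential}_{L^\infty(\BallRadius{3})}|E\symmDiff F|$ and the penalization term by at most $\penal|E\symmDiff F|$, so \eqref{definitionAlmostMinimizer} holds with $\ConstAlmostMin\coloneqq \norm{\volScaledPotential}_{L^\infty(\BallRadius{3})}+\penal$, which is controlled uniformly using $\volumezero<1$ and the local Lipschitz bound \ref{H:locallyLipschitz}. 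A minor point to check here is that $\volScaledPotential$ is uniformly bounded on $\BallRadius{3}$: this follows from $\length\le 1$, $|\xvolume|\le\Rzero$ and local boundedness of $\potential$ on $\BallRadius{3+\Rzero}$.

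Next I would upgrade from $(\ConstAlmostMin,\delta)$-minimality to $C^{1,\alpha}$ regularity. The key input is \cref{thm:Uniform_closeness_of_minimizers_to_ball}: since $\BallRadius{1-\rzero}\subset\EtildeVol\subset\BallRadius{1+\rzero}$ with $\rzero\to 0$ as $\volume\downarrow 0$, at every boundary point $x\in\partial\EtildeVol$ the boundary is trapped in a thin annular slab, hence for $\volume$ small the flatness hypothesis \eqref{regularityCriterion} of \cref{improvementFlatnessFFMMM15} is satisfied (after translating $x$ to the origin and rescaling the unit ball in that theorem to a small ball $B_{\rzero'}(x)$, using the scaling properties of $\Ps$ and of the $(\ConstAlmostMin,\delta)$-minimality condition). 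Applying \cref{improvementFlatnessFFMMM15} with $s_0=s$ and $\Lambda=\ConstAlmostMin$ therefore gives that $\partial\EtildeVol$ is of class $C^{1,\alpha}$ with a uniform bound on the $C^{1,\alpha}$-norm. At this stage one also gets, via the Euler–Lagrange equation \eqref{eulerLagrangeEqFrMc} for the rescaled problem, that the constrained minimizer satisfies $\sMeanCurvature{\EtildeVol}+\volScaledPotential=\lambda$ on $\partial\EtildeVol$ in the viscosity/weak sense; here $\volScaledPotential$ is Lipschitz on $\BallRadius{3}$ with a uniform Lipschitz constant (again by \ref{H:locallyLipschitz} and $\length\le1$, since $\Lip(\volScaledPotential;\BallRadius{3})=\length^{1+s}\Lip(\potential;\BallRadius{3+\Rzero})\le\Lip(\potential;\BallRadius{3+\Rzero})$).

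Then I would run the bootstrap of Barrios–Figalli–Valdinoci \cite{BFV14}: once $\partial\EtildeVol\in C^{1,\alpha}$ and it solves the fractional prescribed-mean-curvature equation with Lipschitz (hence $C^{0,\beta}$ for every $\beta<1$) right-hand side $\lambda-\volScaledPotential$, Schauder-type estimates for the fractional mean curvature operator promote the regularity to $C^{2,\alpha}$ for every $\alpha<s$. (One must be slightly careful that the exponent is capped at $s$ because the right-hand side is only Lipschitz, not smooth, which matches the statement of the lemma.) The uniform $C^{1,\alpha}$ bound, together with uniform ellipticity constants in the linearized equation and the compactness of the class of $(\ConstAlmostMin,\delta)$-minimizers with the given flatness, yields a \emph{uniform} $C^{2,\alpha}$ bound for $\volume\in(0,\widetilde m_0)$.

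Finally, convexity. I would argue by compactness: suppose $\volume_k\downarrow 0$ and $\EtildeVol[\volume_k]$ are the corresponding rescaled minimizers. By the uniform $C^{2,\alpha}$ bound and Arzel\`a–Ascoli, up to subsequences $\partial\EtildeVol[\volume_k]\to\Sigma$ in $C^2$ for some closed hypersurface $\Sigma$; by \cref{thm:Uniform_closeness_of_minimizers_to_ball} (with $\rzero\to0$) the sets converge to $\BallRadius{1}$ in measure, forcing $\Sigma=\partial\BallRadius{1}$, and in fact \emph{every} subsequence has this limit, so $\partial\EtildeVol\to\partial\BallRadius{1}$ in $C^2$ as $\volume\downarrow0$. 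Since the second fundamental form of $\partial\BallRadius{1}$ is the identity (positive definite), $C^2$-closeness implies the second fundamental form of $\partial\EtildeVol$ is positive definite for $\volume$ small, and a connected hypersurface with everywhere positive-definite second fundamental form bounds a convex body; hence $\EtildeVol$ is convex for $\volume<\widetilde m_0$. The main obstacle is the second step: verifying cleanly that the uniform $\Lambda$-minimality plus the quantitative $L^\infty$-closeness of \cref{thm:Uniform_closeness_of_minimizers_to_ball} really produce, after the appropriate rescaling, the flatness hypothesis \eqref{regularityCriterion} at \emph{every} boundary point with constants independent of $\volume$, and then tracking that all constants in the \cite{BFV14} bootstrap depend only on $\dimension$, $s$, $\potential$ and not on the particular minimizer — everything else is a fairly routine assembly of the cited black boxes.
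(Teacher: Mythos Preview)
Your proposal does not address the stated theorem at all. The statement you were asked to prove is \cref{improvementFlatnessFFMMM15}, the improvement-of-flatness result for $(\Lambda,\delta)$-minimizers of $\Ps$ quoted from \cite[Corollary~3.5]{FFMMM15}; the paper does not prove this result, it simply imports it as a black box. Instead you have written a proof plan for \cref{lemmaRegularityConvexityMinimizers}, a different (and downstream) statement that \emph{uses} \cref{improvementFlatnessFFMMM15} as one of its ingredients. So as a proof of the stated theorem there is a genuine gap: nothing in what you wrote addresses why a flat $(\Lambda,\delta)$-minimizer has $C^{1,\alpha}$ boundary.

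If your intent was actually to sketch the proof of \cref{lemmaRegularityConvexityMinimizers}, then your outline is essentially the paper's argument: uniform $(\Lambda,\delta)$-minimality from the penalized problem (this is \cref{res:UniformAlmostMinimizer}), flatness from \cref{thm:Uniform_closeness_of_minimizers_to_ball} feeding into \cref{improvementFlatnessFFMMM15} to get $C^{1,\alpha}$ graphicality (this is \cref{lemmaHolderRegularityMinimizers} and \cref{lemmaConvergenceBoundaryMinimizers}), then the \cite{BFV14} bootstrap to $C^{2,\alpha}$ (this is \cref{lemmaImprovedRegularity}), and finally $C^2$-closeness to $\partial B_1$ giving convexity. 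The one substantive difference is in the last step: the paper obtains the uniform $C^{2,\alpha}$ bound by explicitly controlling the Lagrange multiplier $\widetilde\lambda_m$ via the identity \eqref{equationEulerLagrangeScaledMinmizer}, and then upgrades the $L^\infty$-smallness of $\phi_m$ to $C^2$-smallness by interpolation between $C^0$ and $C^{2,\alpha}$, which yields a quantitative rate. Your plan instead invokes Arzel\`a--Ascoli compactness, which also works but gives no rate and still requires the uniform $C^{2,\alpha}$ bound you glossed over; to close that you would need exactly the Lagrange-multiplier computation the paper carries out.
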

In other words, \cref{improvementFlatnessFFMMM15} with \cite[Theorem 1.2]{CapGui10} implies that the boundary of an almost minimizer of $\Ps$ is of class $C^{1,\alpha}$ with some $\alpha \in (0,\,1)$ outside of a closed set of Hausdorff dimension at most $\dimension-2$.

Next we recall the regularity result of fractional minimal cones in $\R^2$ by Savin and Valdinoci \cite{SavVal13}.
\begin{theorem}[\cite{SavVal13}] \label{thmNonlocalCone2d}
	Assume that $E \subset \R^2$ is an $s$-fractional minimal cone, namely, $E$ is a minimizer of $P_s$ in any ball and satisfies that $E = t\,E$ for any $t >0$. Then $E$ is a half-plane.
\end{theorem}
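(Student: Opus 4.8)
Since $E$ minimizes $\Ps$ in every ball, it is in particular a $(\Lambda,\delta)$-minimizer of $\Ps$ for every $\Lambda>0$ and every $\delta>0$; hence, by the regularity theory recalled above (\cref{improvementFlatnessFFMMM15} together with \cite[Theorem 1.2]{CapGui10}), $\partial E$ is of class $C^{1,\alpha}$ outside a relatively closed singular set $\Sigma\subset\partial E$ of Hausdorff dimension at most $\dimension-2=0$. As $E=tE$ for all $t>0$, both $\partial E$ and $\Sigma$ are invariant under all dilations centred at the origin; a dilation-invariant set of zero Hausdorff dimension cannot contain a half-line, so $\Sigma\subset\{0\}$. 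Therefore $\partial E\setminus\{0\}$ is a dilation-invariant embedded $C^{1}$ curve; each of its connected components contains, together with any of its points $x$, the whole open ray $\{tx:t>0\}$, and since such a ray is already a maximal connected $1$-submanifold of $\R^{2}\setminus\{0\}$, the component equals that open ray. Consequently $\partial E$ is a union of rays from the origin; this union is finite because $\partial E\cap\mathbb S^{1}$ is compact and, near each of its points, $\partial E$ coincides with a radial segment (so $\partial E\cap\mathbb S^{1}$ is discrete). Thus $E$ is, up to a Lebesgue-null set, a finite union of closed sectors with vertex at the origin, i.e.\ $E\cap\mathbb S^{1}$ is a finite union of arcs.

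\textbf{Step 2 (Euler--Lagrange equation on the rays).} Minimizing $\Ps$ in every ball with no volume constraint, $E$ satisfies $\sMeanCurvature{E}(x)=0$ at every point $x$ of its regular boundary $\partial E\setminus\{0\}$. Parametrising $\mathbb S^{1}$ by the angular variable and writing $\chi_E$ for the indicator of $E\cap\mathbb S^{1}$, after passing to polar coordinates $y=\rho\,\omega$ and using $\chi_E(\rho\omega)=\chi_E(\omega)$ this becomes, for $x$ at angle $\psi_0$,
\[ 0=\sMeanCurvature{E}(x)=\mathrm{p.v.}\!\int_{0}^{2\pi}\bigl(1-2\chi_E(\psi)\bigr)\,K(\psi-\psi_0)\,\de\psi,\qquad K(\alpha):=\int_{0}^{+\infty}\frac{\rho\,\de\rho}{\bigl(\rho^{2}-2\rho\cos\alpha+1\bigr)^{(2+s)/2}}. \]
The kernel $K$ is positive, even, $2\pi$-periodic and strictly decreasing on $(0,\pi)$, and blows up like $|\alpha|^{-1-s}$ as $\alpha\to0$, which is precisely why the principal value is genuinely needed. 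By scaling, $\sMeanCurvature{E}$ vanishes on an entire ray as soon as it vanishes at one of its points, so the identity above is a finite system of scalar equations for the endpoints of the arcs of $E\cap\mathbb S^{1}$.

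\textbf{Step 3 (the angular equations force a half-plane).} If $E\cap\mathbb S^{1}$ is a single arc of opening $\theta\in(0,2\pi)$, writing the identity of Step~2 at one of its endpoints and using that the two singular contributions — one from the arc, one from its complement — cancel by evenness of $K$, one obtains $\sMeanCurvature{E}(x)=\int_{\{\theta<\alpha<2\pi-\theta\}}K(\alpha)\,\de\alpha$ when $\theta\le\pi$ (and the opposite sign when $\theta\ge\pi$); since $K>0$, this vanishes only for $\theta=\pi$, i.e.\ $E$ is a half-plane. It remains to rule out two or more arcs, and here one follows the argument of Savin and Valdinoci \cite{SavVal13}: one compares $E$ with a competitor that coincides with $E$ outside a large ball and merges two consecutive sectors across one of the short gaps — equivalently straightens a vertex — and shows, using the submodularity identity $\Ps(A\cup B)=\Ps(A)+\Ps(B)-2\int_{A}\!\int_{B}|x-y|^{-\dimension-s}\,\de x\,\de y$ for disjoint $A,B$ together with the sharp fractional isoperimetric deficit of \cref{thm:QuantIsoPs} to dominate the nonlocal tail, that this strictly lowers $\Ps$, contradicting minimality. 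Combined with the single-arc case, this leaves only the half-plane, whose boundary is a line, so $E$ is a half-plane.

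\textbf{Main obstacle.} The delicate point is the exclusion of multi-sector cones. Along each of its rays the cone already looks locally like a flat half-plane, so no local perturbation decreases the energy and any improving competitor must be global; the real task is to control the long-range interaction between the region one modifies inside the ball and the unbounded tail of the cone, and to prove that the tail cannot pay back the local gain — which is exactly where the exact self-similarity of $E$, the explicit kernel $K$ and the quantitative isoperimetric inequality \cref{thm:QuantIsoPs} enter, and where one must also handle the borderline configurations in which the first-order (Euler--Lagrange) information degenerates. It is also the only place where the hypothesis $\dimension=2$ is essential: it is the one dimension in which the regular part of a minimal cone's boundary is necessarily a union of rays, which is what reduces the classification to the one-dimensional angular analysis of Steps~2--3.
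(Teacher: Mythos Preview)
The paper does not prove this theorem at all: it is quoted verbatim from Savin--Valdinoci \cite{SavVal13} as an external input, so there is no ``paper's proof'' to compare against. Your Steps~1--2 and the single-arc computation in Step~3 are essentially correct and follow the standard route (regularity gives $\dim\Sigma\le \dimension-2=0$, dilation invariance forces $\Sigma\subset\{0\}$, hence $\partial E$ is a finite union of rays, and the vanishing of $\sMeanCurvature{E}$ on a single-sector cone forces opening $\pi$).

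The genuine gap is the multi-sector case in Step~3. First, the Euler--Lagrange information alone cannot close the argument: the ``cross'' $E=\{xy>0\}$ (two opposite quadrants) satisfies $\sMeanCurvature{E}\equiv 0$ on $\partial E\setminus\{0\}$ by the reflection symmetry $(x,y)\mapsto(x,-y)$, yet it is not a half-plane; so some configurations survive the first variation and a second-order or direct comparison argument is unavoidable. Second, your proposed comparison is not a proof. Invoking ``the argument of Savin and Valdinoci'' for the very statement you are proving is circular, and the tools you list do not fit: \cref{thm:QuantIsoPs} controls the Fraenkel asymmetry of a set of \emph{finite} volume relative to a \emph{ball}, whereas here the competitors are unbounded cones modified in a ball and the relevant comparison object is a half-plane, so the isoperimetric deficit does not even make sense, let alone ``dominate the nonlocal tail''. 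The submodularity identity you write is correct, but you have not explained how it yields a strict energy drop once the long-range interaction with the unchanged exterior cone is accounted for. The actual mechanism in \cite{SavVal13} is a translation/sliding argument: one compares $E$ with $E+te$, uses minimality together with submodularity to deduce that $E\cup(E+te)$ and $E\cap(E+te)$ are themselves minimizers, and then exploits the resulting rigidity (equality in submodularity forces $E$ and $E+te$ to be ordered) to conclude that $\partial E$ is a single line. None of this appears in your sketch.
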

In particular, by combining the blow-up and blow-down arguments in \cite{CRS10}, one may obtain that $s$-fractional minimal surfaces in $\R^2$ are fully $C^{1,\alpha}$-regular for any $\alpha \in (0,\,s)$.
\begin{corollary}[\cite{SavVal13}] \label{corNonlocalCone2d}
	If $E$ is an $s$-fractional minimal set in an open set $\Omega \subset \R^2$, then $\partial E \cap \Omega'$ is a $C^{1,\alpha}$-curve for any $\Omega' \Subset \Omega$.
\end{corollary}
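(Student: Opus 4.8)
The plan is to run the standard blow-up argument for $s$-minimal sets in the plane, using the classification of two-dimensional $s$-minimal cones in \cref{thmNonlocalCone2d} to rule out singular boundary points. To begin, I would observe that an $s$-minimal set $E$ in $\Omega$ is in particular a $(\Lambda,\delta)$-minimizer of $\Ps$ in the sense of \cref{definitionLocalALmostMinimizer} with $\Lambda=0$ and $\delta$ arbitrary, so that the $\varepsilon$-regularity statement of \cref{improvementFlatnessFFMMM15} is available (with the constant $\varepsilon_0$ depending only on $N=2$ and on a lower bound $s_0$ for the exponent) at every point of $\partial E\cap\Omega'$ and at every sufficiently small scale, after rescaling.

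I would then take an arbitrary point $x_0\in\partial E\cap\Omega'$, translate so that $x_0=0$, and consider the rescaled sets $E_r:=E/r$. By the monotonicity formula for the $s$-perimeter and the $\Llocspace{1}$-compactness and uniform density estimates for $s$-minimal sets (all established in \cite{CRS10}), along a suitable sequence $r_k\downarrow 0$ one has $E_{r_k}\to E_\infty$ in $\Llocspace{1}(\R^2)$, where $E_\infty$ is $s$-minimal in all of $\R^2$ and is a cone, $tE_\infty=E_\infty$ for every $t>0$. By \cref{thmNonlocalCone2d}, $E_\infty$ must be a half-plane, say $\{y\cdot e<0\}$ for some $e\in\mathbb{S}^1$. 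The density estimates promote the $\Llocspace{1}$-convergence $E_{r_k}\to E_\infty$ to local Hausdorff convergence of the boundaries, so for $k$ large $\partial E_{r_k}\cap\BallRadius{1}$ is contained in the slab $\{|y\cdot e|<\varepsilon_0\}$. Applying \cref{improvementFlatnessFFMMM15} to $E_{r_k}$ (a $(0,\delta)$-minimizer satisfying the flatness hypothesis) and scaling back, $\partial E$ is a $C^{1,\alpha}$ graph in a neighbourhood of $x_0$. Since $x_0$ was arbitrary, $\partial E\cap\Omega'$ is a $C^{1,\alpha}$-curve for the (small) exponent $\alpha$ furnished by \cref{improvementFlatnessFFMMM15}.

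Finally, to reach the full exponent range $\alpha\in(0,s)$ announced before the corollary, I would note that along $\partial E$ the Euler--Lagrange equation $\sMeanCurvature{E}=0$ holds, and feed the $C^{1,\alpha_0}$ regularity just obtained into the Schauder-type bootstrap for integro-differential equations of Barrios--Figalli--Valdinoci \cite{BFV14}: a $C^{1,\alpha_0}$ graph with vanishing $s$-fractional mean curvature is $C^{1,\alpha}$ for every $\alpha<s$ (indeed $C^\infty$).

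The main obstacle is the middle step: one must justify the blow-up machinery for the nonlocal perimeter — the monotonicity formula, the fact that blow-up limits are minimizing cones, and the upgrade from $\Llocspace{1}$-convergence to the slab condition needed to trigger the $\varepsilon$-regularity theorem. The genuinely difficult analytic input, the rigidity of planar $s$-minimal cones, is exactly \cref{thmNonlocalCone2d} and is used here as a black box.
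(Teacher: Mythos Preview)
Your proposal is correct and matches the paper's approach: the paper does not give a detailed proof of this corollary but simply cites it from \cite{SavVal13}, with the one-line indication that it follows ``by combining the blow-up and blow-down arguments in \cite{CRS10}'' together with \cref{thmNonlocalCone2d}. Your sketch is precisely an expansion of that indication --- blow up at an arbitrary boundary point, obtain a minimal cone via the monotonicity formula and compactness from \cite{CRS10}, classify the cone as a half-plane by \cref{thmNonlocalCone2d}, trigger the $\varepsilon$-regularity of \cref{improvementFlatnessFFMMM15}, and then bootstrap via \cite{BFV14} to reach any $\alpha<s$ --- so there is nothing to add.
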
  

Originally, the regularity of nonlocal (fractional) minimal surfaces, which are defined by the boundaries of sets minimizing the fractional perimeter, was obtained by Caffarelli, Roquejoffre, and Savin \cite{CRS10}. Precisely, they proved that every fractional minimal surface is locally $C^{1,\alpha}$ outside of a closed singular set of Hausdorff dimension at most $\dimension-2$. Moreover, thanks to \cref{corNonlocalCone2d}, it follows that the singular set of fractional minimal surfaces has Hausdorff dimension at most $\dimension-3$.

Now we prove that the rescaled minimizers for our problem are $(\Lambda, \delta)-$minimizers of $\Ps$ where $\Lambda$ is independent of the minimizers.

\begin{lemma}[$(\Lambda, \delta)$-Minimality or Almost Minimality]\label{res:UniformAlmostMinimizer}
	Let $s \in (0,\,1)$ and $\delta\in(0,1)$. Assume that $\potential$ satisfies \ref{H:coercive}, \ref{H:locallyLipschitz} and \eqref{eq:Minofg}. For every $m>0$, let $\Evol$ be a minimizer of $\EucEnergysPotential$ with $|\Evol|=\volume$ and let $\EtildeVol$ be as in \eqref{eq:Def_Rescaled_Minimizer}. Then there exist $m_0=m_0(N,s,g)>0$ and $\Lambda=\Lambda(\dimension,s,\potential)$ such that, for every $m \in (0,\,m_0)$, $\EtildeVol$ is a $(\Lambda, \delta)-$minimizer of $\Ps$ in the sense of Definition \ref{definitionLocalALmostMinimizer}.
\end{lemma}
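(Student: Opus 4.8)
The plan is to deduce the $(\Lambda,\delta)$-minimality of $\EtildeVol$ directly from the unconstrained penalized formulation established in \cref{prop:Solutions_of_Penalized_Problem(Uniform_version)}. First I would fix $\volume_0 = \volume_0(\dimension,s,\potential)>0$ no larger than the threshold in \cref{prop:Solutions_of_Penalized_Problem(Uniform_version)}, so that for every $\volume\in(0,\volume_0)$ the rescaled minimizer satisfies $\EtildeVol\subset\BallRadius{3/2}$ and is a minimizer of the penalized functional $F\mapsto \Ps(F)+\int_F\volScaledPotential\integralde x+\penal\abs{|F|-|\Bone|}$ among all $F\subset\BallRadius{3}$, with a fixed penalization $\penal=\penalZero(\dimension,s,\potential)$. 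The point is that this penalized functional is an unconstrained variational problem with a fixed (volume-independent) constant $\penalZero$, so $\EtildeVol$ will inherit a uniform almost-minimality.

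Next I would carry out the standard localization argument. Fix $x\in\partial\EtildeVol$, $r\in(0,\delta)$ with $\delta\in(0,1)$, and a competitor $F$ with $F\symmDiff\EtildeVol\Subset\BallRadiusCenter{r}{x}$. Since $x\in\partial\EtildeVol\subset\BallRadius{3/2}$ and $r<1$, we have $\BallRadiusCenter{r}{x}\subset\BallRadius{3}$, so $F$ is an admissible competitor in \eqref{eq:RescaledSetSolvesRescPBWithPenaliz}. Testing the penalized minimality of $\EtildeVol$ against $F$ gives
\begin{equation*}
	\Ps(\EtildeVol)+\int_{\EtildeVol}\volScaledPotential\integralde x \le \Ps(F)+\int_{F}\volScaledPotential\integralde x+\penalZero\abs{|F|-|\Bone|}.
\end{equation*}
Using the decomposition $\Ps(E)=\Ps(E;\BallRadiusCenter{r}{x})+(\text{terms depending only on }E\setminus\BallRadiusCenter{r}{x})$ valid because $\EtildeVol$ and $F$ agree outside $\BallRadiusCenter{r}{x}$ (here $\Ps(E;A)$ denotes the contribution $\iint_{A\times A}+2\iint_{A\times A^c}$ of the double integral that actually changes; the part of the interaction between $A^c$ and $A^c$ cancels), the identical ``outside'' contributions cancel, leaving
\begin{equation*}
	\Ps(\EtildeVol;\BallRadiusCenter{r}{x}) \le \Ps(F;\BallRadiusCenter{r}{x}) + \Bigl|\int_{F}\volScaledPotential\integralde x-\int_{\EtildeVol}\volScaledPotential\integralde x\Bigr| + \penalZero\abs{|F|-|\Bone|}.
\end{equation*}
It remains to bound the last two terms by $\Lambda|\EtildeVol\symmDiff F|$. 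For the potential term, $\bigl|\int_F\volScaledPotential-\int_{\EtildeVol}\volScaledPotential\bigr|\le \norm{\volScaledPotential}_{L^\infty(\BallRadius{3})}\,|\EtildeVol\symmDiff F|$, and by \eqref{eq:Def_VolScaledPotential} together with $\length=\length_{\volume}<1$ and $|\xvolume|\le\Rzero$ we have $\norm{\volScaledPotential}_{L^\infty(\BallRadius{3})}=\length^s\sup_{\BallRadius{3\length+\xvolume}}\potential\le\norm{\potential}_{L^\infty(\BallRadius{3+\Rzero})}$, a constant depending only on $\dimension,s,\potential$. For the penalization term, $\abs{|F|-|\Bone|}=\abs{|F|-|\EtildeVol|}\le|\EtildeVol\symmDiff F|$ since $|\EtildeVol|=|\Bone|$. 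Hence the inequality \eqref{definitionAlmostMinimizer} holds with $\Lambda\coloneqq\penalZero+\norm{\potential}_{L^\infty(\BallRadius{3+\Rzero})}$, which depends only on $\dimension$, $s$, $\potential$.

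The argument is essentially routine once \cref{prop:Solutions_of_Penalized_Problem(Uniform_version)} is in hand; the only mild subtlety — and the step I would be most careful about — is the bookkeeping in the localized identity for $\Ps$, making sure that the truncation of competitors at $\BallRadius{3}$ is compatible with the ball $\BallRadiusCenter{r}{x}$ where the modification takes place (this is why $\EtildeVol\subset\BallRadius{3/2}$ and $\delta<1$ are needed), and checking that the $(\dimension-2)$-dimensional ``cross terms'' in the double integral genuinely cancel outside $\BallRadiusCenter{r}{x}$. No new estimate is required beyond what \cref{prop:Solutions_of_Penalized_Problem(Uniform_version)} already provides.
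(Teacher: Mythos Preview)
Your proposal is correct and follows essentially the same approach as the paper's own proof: both deduce the $(\Lambda,\delta)$-minimality directly from the penalized unconstrained formulation of \cref{prop:Solutions_of_Penalized_Problem(Uniform_version)}, test the minimality of $\EtildeVol$ against a competitor $F$ with $F\symmDiff\EtildeVol\subset\BallRadiusCenter{r}{x}$, and bound the potential and penalization error terms by $\Lambda|\EtildeVol\symmDiff F|$ with $\Lambda=\penalZero+\|\potential\|_{L^\infty}$ on a fixed ball. You are in fact slightly more explicit than the paper about why $F\subset\BallRadius{3}$ (so that $F$ is admissible in \eqref{eq:RescaledSetSolvesRescPBWithPenaliz}) and about the cancellation that passes from the global $\Ps$ to the localized $\Ps(\,\cdot\,;\BallRadiusCenter{r}{x})$, which the paper leaves implicit.
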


\begin{proof}
We fix $\delta\in(0,1)$.
From \cref{prop:Solutions_of_Penalized_Problem(Uniform_version)}, we know that $\EtildeVol$ is also a solution to the problem \eqref{eq:RescaledSetSolvesRescPBWithPenaliz},
for some large constant $\mu>0$ independent of $\volume$ and $\EtildeVol\subset\BallRadius{3/2}$. Now, letting $F \subset \Rdim$,  $x \in \partial \EtildeVol$, and $r\in(0,\delta)$ with $\EtildeVol \symmDiff F \subset B_r(x)$, from the minimality of $\EtildeVol$, we have that 
\begin{equation}\label{minimalitySublevel}
	\EucEnergysScaledPotentialVolume{\mu}(\EtildeVol) \leq \EucEnergysScaledPotentialVolume{\mu}(F),
\end{equation}
since $F\subset\BallRadius{3}$.
Hence, from \eqref{minimalitySublevel}, we can compute as follows:
\begin{align}\label{estimateQuasiNonlocalMini}
		&\Ps(\EtildeVol) - \Ps(F)\nonumber\\
		& = \EucEnergysScaledPotentialVolume{\mu}(\EtildeVol) - \int_{\EtildeVol}\volScaledPotential(x)\integralde x - \mu\abs*{|\EtildeVol|-|\Bone|} - \EucEnergysScaledPotentialVolume{\mu}(F) +\int_{F} \volScaledPotential(x)\integralde x + \mu\abs*{|F|-|\Bone|}  \nonumber\\
		&\leq \int_{\Rdim}|\chi_{\EtildeVol}- \chi_{F}|\,\volScaledPotential(x)\integralde x + \mu ||F|-|\EtildeVol|| \nonumber\\
		&\leq \int_{\EtildeVol \symmDiff F}\volScaledPotential(x)\integralde x + \mu |\EtildeVol \symmDiff F|. 
\end{align}
Moreover, from the fact that $\EtildeVol\subset\BallRadius{3/2}$ we deduce that $\EtildeVol \symmDiff F \subset B_r(x) \subset \BallRadius{3}$.

Therefore, by \cref{prop:properties_of_minimizers} and the definition of $\volScaledPotential$, we have
\begin{equation}\label{estimateResidue01}
		\int_{\EtildeVol \symmDiff F} \volScaledPotential(x)\integralde x  \leq \length^s\|\potential\|_{L^{\infty}(B_R(0))} \,|\EtildeVol \symmDiff F|,
\end{equation} 
where $R>0$ is a constant independent of $m$. Hence, from \eqref{estimateQuasiNonlocalMini} and \eqref{estimateResidue01}, we obtain
\begin{equation}\nonumber
	\Ps(\EtildeVol) \leq  \Ps(F) + (\sigma(m_0)^s\|\potential\|_{L^{\infty}(B_R(0))} + \mu) \,|\EtildeVol \symmDiff F|
\end{equation}
for any $F \subset \Rdim$, $x \in \partial \EtildeVol$, and $r\in(0,\delta)$ with $\EtildeVol \symmDiff F \subset B_r(x)$ where $m_0>0$ is as in Proposition \ref{prop:Solutions_of_Penalized_Problem(Uniform_version)}. This implies that $\EtildeVol$ is an almost minimizer or $(\Lambda, \delta)$-minimizer of $\Ps$ in the sense of \eqref{definitionAlmostMinimizer} with $\Lambda \coloneqq \sigma(m_0)^s\|\potential\|_{L^{\infty}(B_R(0))} + \mu < \infty$, which is independent of minimizers.
\end{proof}

As a consequence of \cref{res:UniformAlmostMinimizer} and the aforementioned regularity results for the almost minimizers of the $s$-fractional perimeter, we obtain 
\begin{lemma}[$C^{1,\alpha}$-regularity for Minimizers of $\EucEnergysVolScaledPotential$]\label{lemmaHolderRegularityMinimizers}
	Let $s \in (0,\,1)$ and assume that $\potential$ satisfies \ref{H:coercive}, \ref{H:locallyLipschitz} and \eqref{eq:Minofg}. For every $\volume>0$ let $\Evol$ be a minimizer of $\EucEnergysPotential$ with $|\Evol|=\volume$ and let $\EtildeVol$ be as in \eqref{eq:Def_Rescaled_Minimizer}. There exists $\volumezero=\volumezero(\dimension, s,\potential)>0$ s.t. if $\volume \in (0,\,\volumezero)$, then the boundary of $\EtildeVol$ is of class $C^{1,\alpha}$ for some $\alpha \in (0,\,1)$ outside of a closed set of Hausdorff dimension at most $\dimension-3$. 
\end{lemma}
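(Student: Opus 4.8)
The plan is to combine the uniform almost-minimality established in \cref{res:UniformAlmostMinimizer} with the $\varepsilon$-regularity theory for $(\Lambda,\delta)$-minimizers of $\Ps$ recalled in \cref{improvementFlatnessFFMMM15,thmNonlocalCone2d,corNonlocalCone2d}. First I would fix any $\delta\in(0,1)$ and apply \cref{res:UniformAlmostMinimizer} to obtain $\volumezero=\volumezero(\dimension,s,\potential)>0$ and $\Lambda=\Lambda(\dimension,s,\potential)>0$ such that, for every $\volume\in(0,\volumezero)$, the rescaled minimizer $\EtildeVol$ is a $(\Lambda,\delta)$-minimizer of $\Ps$ in the sense of \cref{definitionLocalALmostMinimizer}. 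The essential point, and the reason the preceding sections were needed, is that $\Lambda$ does not depend on $\volume$ nor on the chosen minimizer, so every constant produced below is uniform in $\volume$ and in $\indice\in\indiceSet$.

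Next I would invoke the regularity theory for $(\Lambda,\delta)$-minimizers of the fractional perimeter: by \cref{improvementFlatnessFFMMM15} together with \cite[Theorem 1.2]{CapGui10}, there exists $\alpha=\alpha(\dimension,s,\Lambda)\in(0,1)$ --- hence depending only on $\dimension$, $s$, $\potential$ --- such that $\partial\EtildeVol$ is a $C^{1,\alpha}$ hypersurface outside a relatively closed singular set $\Sigma\subset\partial\EtildeVol$ with $\mathcal{H}^{\dimension-2}(\Sigma)=0$, i.e. of Hausdorff dimension at most $\dimension-2$; boundedness of $\EtildeVol$, which is contained in $\BallRadius{3/2}$ by \cref{prop:Solutions_of_Penalized_Problem(Uniform_version)}, rules out any issue at infinity. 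To improve this bound to $\dimension-3$ I would run the Federer dimension-reduction argument exactly as in \cite{CRS10,FFMMM15}: blow-ups of $\EtildeVol$ at points of $\Sigma$ are $s$-minimal sets --- and, after a further blow-up, $s$-minimal cones --- since under a dilation of factor $r$ the term $\Ps$ scales like $r^{\dimension-s}$ whereas the penalization and potential terms scale like $r^{\dimension}$, so these lower-order contributions vanish in the limit. If $\Sigma$ had Hausdorff dimension larger than $\dimension-3$, iterated dimension reduction would produce a nontrivial singular $s$-minimal cone in $\R^{2}$, contradicting \cref{thmNonlocalCone2d}; hence $\dim_{\mathcal{H}}\Sigma\le\dimension-3$, which is the assertion. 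When $\dimension=2$ one can instead quote \cref{corNonlocalCone2d} directly to get $\Sigma=\emptyset$.

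There is no genuine analytic obstacle here --- the substantive work sits in \cref{res:UniformAlmostMinimizer} and in the cited regularity theorems --- and the main thing to get right is the bookkeeping: one must check that the radius threshold $\delta$ in \cref{definitionLocalALmostMinimizer} is compatible with the scales at which \cref{improvementFlatnessFFMMM15} operates, and that along a blow-up sequence the rescaled sets remain $(\Lambda_k,\delta_k)$-minimizers with $\Lambda_k\to0$, so that the limit is a genuine $s$-minimal set; both follow at once from the scaling of $\Ps$ against the volume term noted above.
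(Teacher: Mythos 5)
Your proposal is correct and follows essentially the same route as the paper: establish uniform $(\Lambda,\delta)$-minimality via \cref{res:UniformAlmostMinimizer}, apply the $\varepsilon$-regularity of \cref{improvementFlatnessFFMMM15} (and \cite{CapGui10}) to get $C^{1,\alpha}$-regularity outside a set of Hausdorff dimension at most $\dimension-2$, then invoke \cite{SavVal13} to improve the bound to $\dimension-3$. You merely spell out the Federer dimension-reduction and the scaling of the almost-minimality constant under blow-up, which the paper leaves implicit by citing the relevant references.
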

\begin{proof}
 The idea is to apply \cref{improvementFlatnessFFMMM15} to our problem. To this aim, it is sufficient to show that every minimizer $\EtildeVol$ is indeed a $(\Lambda, \delta)-$minimizer in the sense of \eqref{definitionAlmostMinimizer} for some $\Lambda>0$ depending on $\dimension, s$ and $\potential$ only, which is guaranteed by \cref{res:UniformAlmostMinimizer}.

 Therefore, from \cref{improvementFlatnessFFMMM15} (see also \cite[Corollary 3.5]{FFMMM15}) and the regularity of minimal cones in \cite{SavVal13}, we obtain that the boundary of $\EtildeVol$ is of class $C^{1,\alpha}$ for some $\alpha \in (0,\,1)$ outside of a closed set of Hausdorff dimension at most $\dimension-3$.
\end{proof}

From the uniform closeness of minimizers to the Euclidean ball stated in \cref{thm:Uniform_closeness_of_minimizers_to_ball}, we can immediately obtain the Hausdorff convergence of the minimizers to the unit ball.

Then, with the Hausdorff convergence of the minimizers at our disposal, we can exploit the regularity criterion \eqref{regularityCriterion} and the smoothness of the limit set $\BallRadius{1}$ via the argument in \cite[Theorem 26.6]{Maggi12} in order to obtain the following result (see also \cite[Theorem 3.3 and Corollary 3.6]{FFMMM15}).
\begin{lemma}\label{lemmaConvergenceBoundaryMinimizers}
	Let $s \in (0,\,1)$. Assume that $\potential$ satisfies \ref{H:coercive}, \ref{H:locallyLipschitz} and \eqref{eq:Minofg}. For any $m>0$, let $\Evol$ be a minimizer of $\EucEnergysPotential$ with $|\Evol|=\volume$ and let $\EtildeVol$ be as in \eqref{eq:Def_Rescaled_Minimizer}. Then, there exist a constant $m_0>0$ and a bounded sequence of functions $\{\phi_m\}_{m \in (0,\,m_0)} \subset C^{1,\alpha} (\partial B_1)$ with some $\alpha \in (0,\,1)$ (independent of $m$) such that, for any $m \in (0,\,m_0)$,	\begin{equation}\label{representationBoundaryMinimizers}
		\partial \EtildeVol = \left\{ (1+\phi_m (x))\,x \SetSep x \in \partial B_1(0) \right\} \quad \text{and} \quad  \lim_{m \downarrow 0} \|\phi_m\|_{C^1(\partial B_1(0))}=0.
	\end{equation}
\end{lemma}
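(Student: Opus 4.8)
The plan is to first upgrade the $L^\infty$-closeness from \cref{thm:Uniform_closeness_of_minimizers_to_ball} to Hausdorff convergence of $\partial\EtildeVol$ to $\partial B_1$, then feed this into the $C^{1,\alpha}$ regularity machinery of \cref{improvementFlatnessFFMMM15} to get a uniform graph representation, and finally pass to the limit. More precisely, the inclusions $\BallRadius{1-r_0}\subset\EtildeVol\subset\BallRadius{1+r_0}$ with $r_0\le C\volume^{s^2/(2\dimension^2)}\to 0$ force the Hausdorff distance between $\partial\EtildeVol$ and $\partial B_1$ to be at most $r_0$; indeed any point of $\partial\EtildeVol$ lies in the closed annulus $\closure{\BallRadius{1+r_0}}\setminus\BallRadius{1-r_0}$, and conversely density estimates for $(\Lambda,\delta)$-minimizers (available from \cref{res:UniformAlmostMinimizer}) prevent $\partial\EtildeVol$ from staying far from any point of $\partial B_1$. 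So $\partial\EtildeVol\to\partial B_1$ in the Hausdorff sense as $\volume\downarrow 0$.

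Next I would verify the flatness hypothesis \eqref{regularityCriterion} at every boundary point with a scale-invariant version. Fix $x_0\in\partial\EtildeVol$ and let $e=\nu(x_0)$ be (an approximate) outer normal to $B_1$ at the nearly-antipodal point; since $\partial B_1$ is smooth, for any prescribed $\eps_0$ there is $\rho>0$ such that $\partial B_1\cap B_\rho(x_0)$ lies in the slab $\{|(y-x_0)\cdot e|<(\eps_0/2)\rho\}$. By Hausdorff convergence, for $\volume$ small enough (depending only on $\dimension,s,\potential$ through $r_0$) we get $\partial\EtildeVol\cap B_\rho(x_0)\subset\{|(y-x_0)\cdot e|<\eps_0\rho\}$. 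Rescaling $B_\rho(x_0)$ to the unit ball, $\EtildeVol$ becomes a $(\Lambda\rho,\delta/\rho)$-minimizer — but since $\rho$ is a fixed constant depending only on $\dimension,s,\potential$, this is again a $(\Lambda',\delta')$-minimizer with uniform constants, so \cref{improvementFlatnessFFMMM15} applies: $\partial\EtildeVol\cap B_{\rho/2}(x_0)$ is a $C^{1,\alpha}$-graph over the hyperplane $x_0+e^\perp$ with $C^{1,\alpha}$-norm bounded by a constant $C_0=C_0(\dimension,s,\potential)$. Covering $\partial B_1$ by finitely many such balls and using that $\partial\EtildeVol$ is a small normal graph over $\partial B_1$ (again by Hausdorff proximity plus the graph property, exactly as in \cite[Theorem 26.6]{Maggi12}), one writes $\partial\EtildeVol=\{(1+\phi_m(x))x : x\in\partial B_1\}$ with $\{\phi_m\}$ bounded in $C^{1,\alpha}(\partial B_1)$.

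Finally, to get $\|\phi_m\|_{C^1(\partial B_1)}\to 0$, I would argue by contradiction: if not, along a subsequence $\|\phi_{m_k}\|_{C^1}\ge\eta>0$; by the uniform $C^{1,\alpha}$ bound and Arzelà–Ascoli, $\phi_{m_k}\to\phi_\infty$ in $C^1(\partial B_1)$, but $\|\phi_{m_k}\|_{L^\infty}\le r_0(m_k)\to 0$ forces $\phi_\infty\equiv 0$, contradicting $\|\phi_\infty\|_{C^1}\ge\eta$. Hence $\phi_m\to 0$ in $C^1(\partial B_1)$, which is \eqref{representationBoundaryMinimizers}.

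I expect the main obstacle to be making the application of \cref{improvementFlatnessFFMMM15} genuinely uniform in $\volume$: one must check that the flatness scale $\rho$, the almost-minimality constant $\Lambda$, and the resulting $\eps_0,C_0,\alpha$ can all be chosen depending only on $\dimension,s,\potential$ and not on the particular minimizer — this is where \cref{res:UniformAlmostMinimizer} and \cref{prop:Solutions_of_Penalized_Problem(Uniform_version)} are essential, and care is needed with how the $(\Lambda,\delta)$-minimality rescales, since blowing up a ball of fixed radius $\rho$ only preserves almost-minimality up to harmless constants when $\rho$ itself is bounded below uniformly.
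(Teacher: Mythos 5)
Your proposal is correct and follows essentially the same route as the paper: annular inclusion $\Rightarrow$ Hausdorff convergence of boundaries, uniform $(\Lambda,\delta)$-minimality $\Rightarrow$ uniform $C^{1,\alpha}$ graph representation via \cref{improvementFlatnessFFMMM15} and a covering argument, then a compactness step to pass from the uniform $C^{1,\alpha}$ bound to $C^1$ convergence. The one place you deviate is the final step: the paper works in local charts, writes $\partial\EtildeVol$ and $\partial B_1$ as graphs $u_m,u_0$ over a fixed disk, uses the $L^1$ convergence of sets to get weak convergence of $\nabla' u_m$, and combines this with the uniform H\"older estimate and Ascol\`a--Arzel\`a to identify the uniform limit of $\nabla' u_m$ as $\nabla' u_0$ (following the scheme of \cite[Theorem 26.6]{Maggi12}); you instead argue by contradiction, using $\|\phi_m\|_{L^\infty}\le r_0(m)\to 0$ together with $C^{1,\alpha}$-precompactness in $C^1$ to force any $C^1$-subsequential limit to vanish. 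Your version is a bit more direct since the $L^\infty$ bound is already in hand; both are valid. One small point: you invoke density estimates for $(\Lambda,\delta)$-minimizers to get $\partial B_1$ close to $\partial\EtildeVol$, but this is not needed --- the inclusion $B_{1-r_0}\subset\EtildeVol\subset B_{1+r_0}$ alone yields $\mathrm{dist}_{\mathcal{H}}(\partial\EtildeVol,\partial B_1)\le r_0$ by elementary connectedness of radial segments, which is what the paper uses.
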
   
\begin{proof}
\newcommand*{\zvolLimit}{z_{0}}
\newcommand*{\zvolLimitPrime}{\zvolLimit'}
\newcommand*{\zvolLimitN}{z_{0,\dimension}}
\newcommand*{\zvol}{z_{\volume}}
	First of all, we recall that, from \cref{thm:Uniform_closeness_of_minimizers_to_ball}, there exist positive constants $m_0=m_0(\dimension,s,\potential)$ and $C=C(\dimension,s,\potential)$ such that, if $m \in (0,\,m_0)$, $\EtildeVol$ satisfies the inclusion
	\begin{equation}\label{uniformClosenessMinimizersforRegularity}
		B_{1-r_0}(0) \subset \EtildeVol \subset B_{1+r_0}(0)
	\end{equation}
	for some $r_0 \in (0,\, Cm^{s^2/(2\dimension^2)})$. Thus, \eqref{uniformClosenessMinimizersforRegularity} implies that $\EtildeVol$ converges to $B_1(0)$ as $m \downarrow 0$ in $L^1$-sense and moreover, by definition, $\partial \EtildeVol$ converges to $\partial B_1(0)$ in the Hausdorff distance.
	
	Now, as we observe in \cref{res:UniformAlmostMinimizer} and \cref{lemmaHolderRegularityMinimizers}, we obtain that $\EtildeVol$ is also an almost minimizer of $\Ps$ and that $\partial \EtildeVol$ is of class $C^{1,\alpha}$ with some $\alpha \in (0,\,1)$ for $m \in (0,\,m_0)$ outside of a closed set of Hausdorff dimension at most $\dimension-3$. Although the singular sets can appear on each $\partial \widetilde{E}_m$, by combining the convergence of $\{\partial \EtildeVol\}_{m \in (0,\,m_0)}$ to $\partial B_1$ in the Hausdorff distance with the regularity criterion \eqref{regularityCriterion}, we can show that there exists a constant $m'_0>0$ such that, for each $m \in (0,\,m'_0)$, there exists a function $\phi_m \in C^{1,\alpha}(\partial B_1)$ such that
	\begin{equation*}
		\partial \EtildeVol = \left\{ (1+\phi_m(x))\,x \SetSep x \in \partial B_1 \right\}.
	\end{equation*}
	Therefore, the representation of $\partial \widetilde{E}_m$ in \eqref{representationBoundaryMinimizers} holds. We refer to \cite[Corollary 3.6]{FFMMM15} for the similar argument. 
	
	Finally, we show the convergence of the functions $\{\phi_m\}_{m \in (0,\,m'_0) }$ in $C^1$ sense as in \eqref{representationBoundaryMinimizers} by using the argument in \cite[Theorem 26.6]{Maggi12}. Let $\zvol \in \partial \EtildeVol$ be any point and let $R \in (0,\,1)$. By the Hausdorff convergence of $\{\partial \EtildeVol\}_{m  \in (0,\,m'_0)}$, we can choose a subsequence (denoted by the same indices) such that $\zvol \to \zvolLimit \in \partial B_1(0)$ as $m \downarrow 0$. We define the cylinder $C(x,R) \subset \Rdim$ centred at $x = (x',\,x_{\dimension}) \in \R^{\dimension-1} \times \R$ by
	\begin{equation*}
		\{(y',\,y_\dimension) \in \mathbb{R}^{\dimension-1} \times \mathbb{R} \SetSep |y'-x'| < R ,\quad |y_\dimension - x_\dimension| < R\}.
	\end{equation*}
	Then, from the regularity of each $\partial \EtildeVol$ and $\partial B_1(0)$, for each $m \in (0,\, m'_0)$, we can choose a $C^{1,\alpha}$-function $u_m :B'_R(\zvolLimitPrime) \to \mathbb{R}$ and a smooth function $u_0 : B'_R(\zvolLimitPrime) \to \mathbb{R}$ such that $\Lip (u_m) \leq 1$ for any $m$ and 
	\begin{align}
		&C(\zvolLimit,R) \cap \EtildeVol = \{(x',\,x_\dimension) \SetSep -R < x_\dimension < u_m(x')\} \nonumber\\
		&C(\zvolLimit, R) \cap B_1(0) = \{(x',\,x_\dimension) \SetSep -R < x_\dimension < u_0(x') \} \nonumber
	\end{align}
	where $\zvolLimit \coloneqq (\zvolLimitPrime,\,\zvolLimitN) \in \R^{\dimension-1} \times \R$ and $B'_R(x') \subset \mathbb{R}^{\dimension-1}$ is the $(\dimension-1)$-dimensional ball of radius $R$ centred at $x' \in \mathbb{R}^{\dimension-1}$. From the regularity criterion \eqref{regularityCriterion} and the same argument as in, for instance, \cite[Theorem 26.3]{Maggi12}, we further obtain the H\"older continuity of the function $\nabla' u_m$, that is,
	\begin{equation}\label{holderEstimateGraphFunctions}
		|\nabla' u_m (y') - \nabla' u_m(z') | \leq  C\,|y'-z'|^{\alpha} < +\infty
	\end{equation}
	for any $y',\,z' \in B'_R(\zvolLimitPrime)$ and some constant $C>0$ independent of $m$, where $\nabla'$ is the gradient in $\R^{\dimension-1}$ and $\alpha \in (0,\,1)$ is as in \cref{lemmaHolderRegularityMinimizers}. Moreover, thanks to the convergence $\EtildeVol \to B_1(0)$ in $L^1$-sense, we have
	\begin{equation*}
		\int_{B'_R(\zvolLimitPrime)} |u_m(z') - u_0(z')| \,dz' = |\EtildeVol \symmDiff E \cap C(\zvolLimit,R)| \xrightarrow[m \downarrow 0]{} 0.
	\end{equation*}  
	Thus, we obtain
	\begin{equation}\label{convergenceGradientGraphFunctions}
		\int_{B'_R(\zvolLimitPrime)} \xi(z')\,\nabla' u_m(z') \,dz' \xrightarrow[m \downarrow 0]{} \int_{B'_R(\zvolLimitPrime)} \xi(z')\,\nabla' u_0(z')\,dz'
	\end{equation}
	for any $\xi \in C^{\infty}_c(B'_R(\zvolLimit))$. From \eqref{holderEstimateGraphFunctions}, we have that $\{\nabla' u_m\}_{m \in (0,\,m'_0)}$ is equi-continuous and bounded in $C^0(B'_R)$, and thus, by the Ascoli-Arzel\'a theorem, we can extract a subsequence (denoted by the same indices) such that $\nabla' u_m \to v$ as $m \downarrow 0$ uniformly in $B'_R(\zvolLimitPrime)$. From \eqref{convergenceGradientGraphFunctions}, the limit $v$ coincides with $\nabla' u_0$. Therefore, we obtain that the graph functions $\{u_m\}_{m \in (0,\, m'_0)}$ of $\partial \EtildeVol$ converge to the graph function $u_0$ of $\partial B_1$ in $C^1$ sense. From the choice of $\phi_m$, this implies that $\|\phi_m\|_{C^1(\partial B_1)}$ converges to 0. Therefore, we conclude the proof of \eqref{representationBoundaryMinimizers}.
\end{proof}

Now we improve the regularity of the boundary of minimizers of $\EucEnergysVolScaledPotential$ by employing the regularity result for solutions to integro-differential equations via the bootstrap argument. This result was obtained by Barrios, Figalli, and Valdinoci \cite[Theorem 1.6]{BFV14}. They proved the following regularity theorem on the solutions to integro-differential equations. For simplicity, we do not describe all of the theorem here. See \cite[Theorem 1.6]{BFV14} for the full statement. 
\begin{theorem}\label{theoremBootstrapBFV}
	Let $s \in (0,\,1)$, $\beta \in (0,\,1]$, and $r>0$. Let $v \in L^{\infty}(\R^{\dimension-1})$ be a solution (in the viscosity sense) to the integro-differential equation
	\begin{equation}\nonumber
		\int_{\R^{\dimension-1}}\Lr(x',\,y')\left( v(x'+y') + v(x'-y') - 2v(x') \right) \,\de y' = F(x', v(x'))
	\end{equation}
	for any $x' \in B'_r(0) \subset \R^{\dimension-1}$ where $\Lr$ satisfies the following assumptions:
	\begin{enumerate}[label=(A\arabic*)]
		\item\label{item:A_one} There exist constants $a_0,\,r_0>0$ and $\eta \in (0,\frac{a_0}{4})$ such that 
		\begin{equation*}
			\frac{(1-s)(a_0-\eta)}{|y'|^{\dimension+s}} \leq \Lr(x',\, y') \leq \frac{(1-s)(a_0+\eta)}{|y'|^{\dimension+s}} 
		\end{equation*} 
		for any $x' \in B'_r(0)$ and $y' \in B'_{r_0}(0) \setminus \{0\}$;
		
		\item\label{item:A_two} $\Lr \in C^{0,\beta}(B'_r(0) \times (\mathbb{R}^{N-1}\setminus\{0\}))$ and there exists a constant $C_0>0$ such that
		\begin{equation*}
			\| \Lr(\cdot,\,y') \|_{C^{0,\beta}(B'_r)} \leq \frac{C_0}{|y'|^{\dimension+s}}
		\end{equation*}
		for any $y' \in \mathbb{R}^{N-1} \setminus \{0\}$;
	\end{enumerate}
	and $F \in C^{0,\beta}(B'_r(0) \times \R)$. Then, if $\eta$ is sufficiently small, then $v \in C^{1+s+\alpha}(B'_{r/2}(0))$ for any $\alpha < \beta$. 
	Moreover, we have the estimate
	\begin{equation*}
		\|v\|_{C^{1+s+\alpha}(B'_{r/2}(0))} \leq C \, \left( 1 + \|v\|_{L^{\infty}(\R^{\dimension-1})} + \|F\|_{L^{\infty}(B'_r(0) \times \R)}\right),
	\end{equation*} 
	where $C>0$ is a constant depending on $\dimension$, $s$, $C_0$, and $\|F\|_{C^{0,\beta}(B'_r(0) \times \R)}$.
\end{theorem}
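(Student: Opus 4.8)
Set $n\coloneqq\dimension-1$. By \ref{item:A_one} the kernel behaves near the origin like $(1-s)a_0|y'|^{-(n+1+s)}$, so the integro-differential operator in the equation is uniformly elliptic of order $1+s\in(1,2)$, with constant-coefficient ``model'' a fractional Laplacian $c_{n,s}a_0(-\Delta_{\R^n})^{(1+s)/2}$; one therefore expects solutions to gain exactly $1+s$ derivatives over the right-hand side. The subtlety forcing a bootstrap is that the source $F(x',v(x'))$ is only as regular as $v$ itself. The plan is: (i) split off the far part of the operator and reduce to a uniformly elliptic equation with bounded source; (ii) obtain a base Hölder estimate for $v$; (iii) establish an interior Schauder estimate of order $1+s$ for the resulting (rough-kernel) equation; (iv) iterate, using the composition structure of $F(\cdot,v(\cdot))$. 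The full argument is carried out in \cite[Theorem 1.6]{BFV14}.

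\textbf{Step 1: reduction and base regularity.} Because $v\in L^\infty(\R^n)$ and $\Lr(x',y')\le C_0|y'|^{-(n+1+s)}$ for $|y'|\ge r_0$ by \ref{item:A_two}, the tail $\int_{\{|y'|\ge r_0\}}\Lr(x',y')\bigl(v(x'+y')+v(x'-y')-2v(x')\bigr)\,\de y'$ is absolutely convergent and bounded by $C(n,s,r_0,C_0)\|v\|_{L^\infty(\R^n)}$. Moving it to the right-hand side, $v$ solves on $B'_r(0)$ an equation $\mathcal L_{\mathrm{loc}}v=G$, where $\mathcal L_{\mathrm{loc}}$ is the part of the operator restricted to $\{|y'|<r_0\}$ --- by \ref{item:A_one} uniformly elliptic of order $1+s$ and comparable (up to a bounded operator) to the fractional Laplacian --- with $C^{0,\beta}$ coefficients in the base point $x'$, and $G\in L^\infty(B'_r(0))$ with $\|G\|_{L^\infty}\le C\bigl(\|v\|_{L^\infty(\R^n)}+\|F\|_{L^\infty(B'_r(0)\times\R)}\bigr)$. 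Since $\eta<a_0/4$, $\mathcal L_{\mathrm{loc}}$ lies in a fixed ellipticity class, so the nonlocal Krylov--Safonov / De Giorgi estimate for bounded viscosity solutions with bounded source gives $v\in C^{0,\gamma}_{\mathrm{loc}}(B'_r(0))$ for some $\gamma=\gamma(n,s,a_0)\in(0,1)$, with a bound of the asserted form on $B'_{3r/4}(0)$.

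\textbf{Step 2: Schauder estimate, and bootstrap.} The analytic core is: for $\theta\in(0,\beta]$ with $1+s+\theta\notin\N$, any bounded viscosity solution of $\mathcal L_{\mathrm{loc}}w=h$ in $B'_1(0)$ with $h\in C^{0,\theta}(B'_1(0))$ satisfies $w\in C^{1+s+\theta}(B'_{1/2}(0))$ together with $\|w\|_{C^{1+s+\theta}(B'_{1/2})}\le C\bigl(\|h\|_{C^{0,\theta}(B'_1)}+\|w\|_{L^\infty(\R^n)}\bigr)$. I would prove this by freezing the kernel at a point $x'_0\in B'_{1/2}(0)$, writing $\mathcal L_{x'_0}w=h+(\mathcal L_{x'_0}-\mathcal L_{\mathrm{loc}})w$; the frozen operator $\mathcal L_{x'_0}$, having constant kernel trapped between $(1-s)(a_0\mp\eta)|y'|^{-(n+1+s)}$ by \ref{item:A_one}, is (for $\eta$ small) a controlled perturbation of the order-$(1+s)$ fractional Laplacian, for which the sharp interior Schauder estimate is available --- e.g.\ via the $A_2$-weighted Caffarelli--Silvestre extension and the Schauder theory for degenerate elliptic equations, or by a blow-up/compactness argument resting on the Liouville theorem for $(-\Delta)^{(1+s)/2}$. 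The perturbation error is handled using \ref{item:A_two}, namely $|\Lr(x',y')-\Lr(x'_0,y')|\le C_0|x'-x'_0|^\beta|y'|^{-(n+1+s)}$ with $\theta\le\beta$, together with the regularity of $w$ available at the current step (when $w$ is only Hölder this is done through incremental quotients $x'\mapsto w(x'+h')-w(x')$ rather than pointwise-defined operators); one then obtains approximation of $w$ on $B'_\rho(x'_0)$ by polynomials of degree $\lfloor 1+s+\theta\rfloor$ with error $O(\rho^{1+s+\theta})$, and summing over dyadic $\rho$, uniformly in $x'_0$, gives the Campanato characterization of $C^{1+s+\theta}(B'_{1/2}(0))$. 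Now bootstrap: from $v\in C^{0,\gamma}$ the map $x'\mapsto F(x',v(x'))$ is $C^{0,\min(\gamma,\beta)}$ (by $F\in C^{0,\beta}(B'_r(0)\times\R)$ composed with $v$) and the tail term is $C^{0,\min(\gamma,\beta)}$ as well, so $G\in C^{0,\gamma_1}$ with $\gamma_1=\min(\gamma,\beta)$; Step 2 (after rescaling) gives $v\in C^{1+s+\gamma_1'}$ for every $\gamma_1'<\gamma_1$, and since $1+s>1$ this makes $v$ locally Lipschitz. Then $x'\mapsto F(x',v(x'))$ and the tail are both $C^{0,\beta}$, hence $G\in C^{0,\beta}$, and a second application of Step 2 yields $v\in C^{1+s+\alpha}(B'_{r/2}(0))$ for every $\alpha<\beta$; composing the constants through the (at most two) iterations gives the quantitative bound with $C$ depending only on $\dimension,s,C_0$ and $\|F\|_{C^{0,\beta}(B'_r(0)\times\R)}$.

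\textbf{Main obstacle.} The crux is the Schauder estimate of Step 2 at the non-integer order $1+s+\theta\in(1,3)$ for a nonlocal operator whose kernel is only $C^{0,\beta}$ in the base point and not translation-invariant. The freezing error $(\mathcal L_{x'_0}-\mathcal L_{\mathrm{loc}})w$ is again a nonlocal operator of the \emph{full} order $1+s$, so it cannot simply be absorbed into the source; it must be controlled by the Hölder smallness of the kernel difference \emph{and} the partial regularity of $w$ at the current stage --- and when $w$ is merely Hölder that operator is not even pointwise defined, forcing the whole Schauder/bootstrap machinery to be run simultaneously through incremental quotients and regularization in the correct order. A secondary point is ensuring the iteration terminates precisely at exponent $\alpha<\beta$, and not earlier; this is exactly where the hypothesis that $\eta$ be small is used, keeping every frozen operator $\mathcal L_{x'_0}$ inside a fixed comparison class of the order-$(1+s)$ fractional Laplacian so that the model Schauder estimate applies perturbatively throughout.
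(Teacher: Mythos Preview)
The paper does not prove this theorem at all: it is quoted verbatim (in abridged form) from \cite[Theorem~1.6]{BFV14} and used as a black box in the proof of Lemma~\ref{lemmaImprovedRegularity}. There is therefore no ``paper's own proof'' to compare your proposal against; your sketch is rather an outline of the argument in \cite{BFV14} itself, and you acknowledge this explicitly.

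That said, your outline is faithful to the strategy actually used in \cite{BFV14}: split off the integrable tail, obtain a base H\"older estimate from nonlocal Krylov--Safonov theory, then run a freezing-of-coefficients Schauder argument at order $1+s$ and bootstrap through the composition $x'\mapsto F(x',v(x'))$. Your identification of the order of the operator as $1+s$ (since the kernel in $\R^{N-1}$ scales like $|y'|^{-(N-1)-(1+s)}$) is correct, as is the observation that after one pass $v$ becomes Lipschitz, forcing the right-hand side into $C^{0,\beta}$ and terminating the iteration at $C^{1+s+\alpha}$ for $\alpha<\beta$. The main obstacle you flag --- that the freezing error is itself a full-order nonlocal operator and must be handled through incremental quotients when $w$ is only H\"older --- is exactly the technical point treated at length in \cite{BFV14}. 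Nothing in your sketch is wrong or missing at the level of a proof strategy; a complete proof would of course require carrying out the Campanato iteration quantitatively, which is what \cite{BFV14} does.
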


Taking into account all of the above arguments, we can obtain that the boundary of minimizers of $\EucEnergysVolScaledPotential$ with the volume $|B_1|$ has $C^{2,\beta}$-regularity for any $\beta \in (0,\,s)$. Precisely, we prove
\begin{lemma}[Improved Regularity of Minimizers]\label{lemmaImprovedRegularity}

	Let $s \in (0,\,1)$ and $m_0>0$ be as in Lemma \ref{lemmaConvergenceBoundaryMinimizers}. Assume that $\potential$ satisfies \ref{H:coercive}, \ref{H:locallyLipschitz} and \eqref{eq:Minofg}. Then, if $\Evol$ is a minimizer of $\EucEnergysPotential$ with $|\Evol|=\volume$ for any $\volume \in (0,\,m_0)$, $\partial \widetilde{E}_m$ is of class $C^{2,\beta}$ for any $\beta \in (0,\,s)$, where $\EtildeVol$ is defined as in \eqref{eq:Def_Rescaled_Minimizer}.
\end{lemma}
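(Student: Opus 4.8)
The plan is to promote the $C^{1,\alpha}$-regularity of $\partial\EtildeVol$ already obtained in \cref{lemmaHolderRegularityMinimizers} and \cref{lemmaConvergenceBoundaryMinimizers} to $C^{2,\beta}$ by inserting the Euler--Lagrange equation for $\EtildeVol$ into the bootstrap result of Barrios--Figalli--Valdinoci, \cref{theoremBootstrapBFV}. First, since $\EtildeVol$ is a minimizer of the volume-constrained rescaled problem \eqref{eq:RescaledSetSolvesRescPB} and $\partial\EtildeVol$ is of class $C^{1,\alpha}$, a standard first-variation computation with volume-preserving normal perturbations (made rigorous through the $(\Lambda,\delta)$-minimality of \cref{res:UniformAlmostMinimizer}) shows that
\begin{equation*}
	\sMeanCurvature{\EtildeVol}(x) + \volScaledPotential(x) = \lambda_m \qquad\text{for } x\in\partial\EtildeVol
\end{equation*}
holds in the viscosity sense, for some $\lambda_m\in\R$; by the penalized formulation \eqref{eq:RescaledSetSolvesRescPBWithPenaliz} of \cref{prop:Solutions_of_Penalized_Problem(Uniform_version)} one has $|\lambda_m|\le\penal$ with $\penal=\penal(\dimension,s,\potential)$ independent of $m$.

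Next, fix $x_0\in\partial\EtildeVol$ and rotate coordinates so that the outer normal at $x_0$ is $e_\dimension$. By \cref{lemmaConvergenceBoundaryMinimizers} (and since $\norm{\phi_m}_{C^1(\partial B_1)}\to0$) there is a radius $r>0$, independent of $m$ and of $x_0$, and a function $v\in C^{1,\alpha}(B'_r(0))$ whose $C^1$-norm is as small as we wish for $m$ small, such that $\partial\EtildeVol$ coincides near $x_0$ with the graph $x_\dimension=v(x')$. Writing the $s$-fractional mean curvature of a graph in the standard way, the equation above becomes
\begin{equation*}
	\int_{\R^{\dimension-1}}\Lr(x',y')\bigl(v(x'+y')+v(x'-y')-2v(x')\bigr)\integralde y' = F(x',v(x'))
\end{equation*}
for $x'\in B'_{r/2}(0)$, where the kernel $\Lr$ is built from $v$ and the right-hand side $F(x',t)$ is built from $\volScaledPotential$ and $\lambda_m$; this is exactly the structure of \cref{theoremBootstrapBFV}. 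Because $v\in C^{1,\alpha}$ has small $C^1$-norm, $\Lr$ is a small perturbation of the model kernel $(1-s)a_0\abs{y'}^{-\dimension-s}$, so \ref{item:A_one} holds with $\eta$ arbitrarily small provided $m\le m_0$; moreover $\Lr(\cdot,y')\in C^{0,\alpha}$ with the bound in \ref{item:A_two}, and, $\potential$ being locally Lipschitz, $F\in C^{0,\alpha}(B'_r(0)\times\R)$.

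\cref{theoremBootstrapBFV} then gives $v\in C^{1+s+\alpha'}(B'_{r/2}(0))$ for every $\alpha'<\alpha$ — a genuine gain of almost $s$ derivatives. Iterating (each step raises the Hölder exponent of $\Lr$ and of $F$ by roughly $s$), after finitely many steps one reaches $v\in C^{1,\gamma}$ with $\gamma>1-s$, hence $v\in C^{1,1}$ locally; a final application of \cref{theoremBootstrapBFV} with Hölder exponent close to $1$ yields $v\in C^{2,\beta}$ for every $\beta\in(0,s)$. Since $x_0$ was arbitrary, $\partial\EtildeVol\in C^{2,\beta}$ for all $\beta\in(0,s)$; moreover the constants in \cref{theoremBootstrapBFV} are uniform in $m$ — through the uniform smallness of $\norm{\phi_m}_{C^1(\partial B_1)}$, the local Lipschitz bound of $\potential$ on a fixed ball, and $|\lambda_m|\le\penal$ — so that the $C^{2,\beta}$-norm of $\partial\EtildeVol$ is bounded independently of $m\in(0,m_0)$; this uniform bound is what \cref{lemmaRegularityConvexityMinimizers} will use.

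The delicate part is the graph reduction together with the verification of \ref{item:A_one}--\ref{item:A_two}: rewriting $\sMeanCurvature{\EtildeVol}$ of a merely $C^{1,\alpha}$ graph as an integro-differential operator of precisely the form covered by \cref{theoremBootstrapBFV}, and checking that the induced kernel is uniformly elliptic with $\eta$ small and Hölder in $x'$ with the required weight — this is exactly where the uniform flatness of $\partial\EtildeVol$ from \cref{lemmaConvergenceBoundaryMinimizers} is indispensable — while making sure the iteration genuinely closes, i.e.\ the exponent strictly increases at each step until it crosses the threshold $1-s$. Justifying the Euler--Lagrange equation in the viscosity sense for the almost-minimizer $\EtildeVol$ in the first step also requires a short argument.
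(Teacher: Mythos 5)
Your proposal matches the paper's proof in all essentials: both reduce $\partial\widetilde{E}_m$ locally to a $C^{1,\alpha}$ graph via \cref{lemmaConvergenceBoundaryMinimizers}, recast the Euler--Lagrange equation as the integro-differential equation of \cref{theoremBootstrapBFV}, and iterate the bootstrap to reach $C^{2,\beta}$ for every $\beta\in(0,s)$; the paper is terser, simply citing the computation of \cite{BFV14} and applying the theorem ``several times, if necessary,'' whereas you spell out the kernel verification and the step-by-step exponent gain. Your bound $|\lambda_m|\le\mu$ drawn directly from the penalized formulation of \cref{prop:Solutions_of_Penalized_Problem(Uniform_version)} is a clean alternative to the scaling-identity estimate the paper only produces later, in the proof of \cref{lemmaRegularityConvexityMinimizers}. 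One small wording slip: from $v\in C^{1,\gamma}$ with $\gamma>1-s$ alone one cannot write ``hence $v\in C^{1,1}$''; rather, one further application of \cref{theoremBootstrapBFV} (with kernel exponent $\gamma$) then gives $v\in C^{1+s+\alpha}$ for some $\alpha>1-s$, i.e.\ $v\in C^{2,\cdot}\subset C^{1,1}$, after which the final step with Hölder exponent close to $1$ proceeds as you state.
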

\begin{proof}
	We may assume that $0 \in \partial \widetilde{E}_m$ by translation. By \cref{lemmaConvergenceBoundaryMinimizers}, we can represent $\partial \EtildeVol \cap (B'_r(0) \times (- r,\, r) ) \subset \R^{\dimension-1} \times \mathbb{R}$ for $r>0$ as a graph of a $C^{1,\alpha}$-function $u$ for some $\alpha \in (0,\,1)$ by choosing the proper coordinate. From the fact that $\EtildeVol$ is a minimizer of $\EucEnergysVolScaledPotential$ with volume constraint and by employing the computation shown in \cite{BFV14}, we may obtain the following Euler-Lagrange equation in the viscosity sense:
	\begin{align}
		&\int_{\R^{\dimension-1}} \Lr(x',\,y') (u(x'+y') + u(x'-y') - 2u(x'))\,\de y' \nonumber\\
		&\quad  = G(x',\,u(x')) + \widetilde{\lambda}_m - \volScaledPotential(x',\,u(x')) \quad \text{for $x' \in B'_{r'/2}(0) \subset \R^{\dimension-1}$} \nonumber
	\end{align} 
	for $0 < r' < r$, where $\Lr$ satisfies \ref{item:A_one} and \ref{item:A_two} in \cref{theoremBootstrapBFV}, $\spt \Lr(x', \cdot) \subset B'_{r'/2}(0)$ for any $x' \in B'_{r'/2}(0)$, $G$ is some smooth function (see \cite[Section 3]{BFV14} for the precise expression), and $\widetilde{\lambda}_m$ is a Lagrange multiplier. Then, since the potential $\potential$ is locally Lipschitz, we now apply \cref{theoremBootstrapBFV} several times, if necessary, to conclude that the regularity of $u$ can be improved up to $C^{2,\beta}$ with $\beta \in (0,\,s)$. From the compactness of $\partial \EtildeVol$ and by a simple covering argument, we obtain the $C^{2, \beta}$-regularity of $\partial \EtildeVol$ for any $\beta \in (0,\,s)$ with the following estimate
	\begin{equation*}
		\|u\|_{C^{2,\beta}(B'_{r'/4})} \leq C \left( 1+ \|u\|_{L^{\infty}(B'_r)} + |\widetilde{\lambda}_m| + \|\volScaledPotential\|_{L^{\infty}(B_{2R_0})} \right),
	\end{equation*} where $C>0$ is a constant depending only on $\dimension$, $s$, and $\|\potential\|_{C^{0,1}(B_{2R_0})}$ and $R_0$ is as in Proposition \ref{prop:properties_of_minimizers}.

\end{proof}
Finally, we are ready to prove \cref{lemmaRegularityConvexityMinimizers}, that is, the main lemma in this section.
\begin{proof}[Proof of \cref{lemmaRegularityConvexityMinimizers}]
	Taking into account all the arguments in \cref{prop:properties_of_minimizers}, \cref{thm:Uniform_closeness_of_minimizers_to_ball}, \cref{lemmaConvergenceBoundaryMinimizers}, and \cref{lemmaImprovedRegularity}, we obtain that there exist a constant $m_0 = m_0(\dimension,s,\potential)>0$ and a sequence of functions $\{\phi_m\}_{m \in (0, \,m_0)} \subset C^{2,\alpha}(\partial B_1(0))$ for any $\alpha \in (0,\,s)$ such that
	\begin{align}
		&\partial \EtildeVol = \{ (1 + \phi_m(x))\,x \SetSep x \in \partial B_1 \}, \, \|\phi_m\|_{L^{\infty}(\partial B_1)} \leq C_1\, m^{\frac{s^2}{2\dimension^2}}, \, \lim_{m \downarrow 0}\|\phi_m\|_{C^1(\partial B_1)} = 0 \label{representationMinimizersNearBall}
	\end{align}
	where $C_1>0$ is a constant depending only on $\dimension$, $s$, and $\potential$. Moreover, from the proof of \cref{lemmaImprovedRegularity} and \eqref{representationMinimizersNearBall}, we can obtain
	\begin{equation}\label{representationMinimizerNearBall02}
		\|\phi_m\|_{C^{2,\alpha}(\partial B_1)} \leq C_2( 1 + |\widetilde{\lambda}_m|)
	\end{equation} 
	for any $m \in (0,\,m_0)$, by choosing $m_0>0$ small if necessary, where $C_2>0$ is a constant depending only on $\dimension$, $s$, and $\potential$ and $\widetilde{\lambda}_m$ is a Lagrange multiplier as in \cref{lemmaImprovedRegularity}. 
	
	Now we derive the upper bound of $|\widetilde{\lambda}_m|$ from the minimality of $\widetilde{E}_m$. Since $\EtildeVol$ is a minimizer of $\EucEnergysVolScaledPotential$ with $|\EtildeVol| = |B_1|$, by considering the Euler-Lagrange equation with volume constraint, we have
	\begin{equation}\label{eulerLagrangeEnergy}
		\frac{d}{dt}\Big\lfloor_{t=0}\left( \Ps(\Phi_t(\EtildeVol)) + \int_{\Phi_t(\EtildeVol)}\volScaledPotential \right) = \widetilde{\lambda}_m \frac{d}{dt}\Big\lfloor_{t=0}|\Phi_t(\EtildeVol)|
	\end{equation}
	where $\{\Phi_t\}_{|t|<1}$ is one-parameter diffeomorphism associated with $T \in C^{\infty}_c(\Rdim;\Rdim)$, that is, $\Phi_t(x) \coloneqq x+tT(x)$ for any $x \in \Rdim$ and $|t|<1$. Then, substituting the identity function $\mathrm{Id}: x \mapsto x$ with $T$ (by using a suitable cut-off function or by approximation), we obtain, from \eqref{eulerLagrangeEnergy},
	\begin{equation}\label{equationEulerLagrangeScaledMinmizer}
		 \dimension|B_1| \, \widetilde{\lambda}_m  = (\dimension-s)\Ps(\EtildeVol) + \int_{\EtildeVol} \nabla g_m(x) \cdot x \,dx + N \int_{\widetilde{E}_m} g_m(x) \,dx.
	\end{equation}
	From \cref{thm:Uniform_closeness_of_minimizers_to_ball} and the assumption on $g$, we obtain
	 \begin{align}
	 	\left| \int_{\EtildeVol} \nabla g_m(x) \cdot x \,dx + N \int_{\widetilde{E}_m} g_m(x) \,dx \right| \leq C(\dimension,s,\potential) \, m^{\frac{s}{\dimension}} \label{estimateIntegralBoundary}
	 \end{align}
	 
	for any $m \in (0,\,m_0)$ where $C=C(\dimension,s,\potential)>0$ is a constant and $\Rzero$ is given in \cref{prop:properties_of_minimizers} depending only on $\dimension$, $s$, and $\potential$. Then, from \eqref{equationEulerLagrangeScaledMinmizer} and \eqref{estimateIntegralBoundary} and by the minimality of $\EtildeVol$, we obtain
	\begin{equation}
		|\widetilde{\lambda}_m| \leq (\dimension|B_1|)^{-1} \left( (\dimension-s)\Ps(B_1) + C(\dimension,s,\potential)m^{\frac{s}{\dimension}} \right) \label{estimateLagrangeMultiplier}
	\end{equation}
	for any $m \in (0,\,m_0)$. Therefore, from \eqref{representationMinimizerNearBall02} and \eqref{estimateLagrangeMultiplier}, we obtain
	\begin{equation}\label{representationMinimizerNearBall03}
		\|\phi_m\|_{C^{2,\alpha}(\partial B_1)} \leq C_3(\dimension,s,\potential) < \infty
	\end{equation}
	for any $m \in (0,\,m_0)$ and some constant $C_3>0$ independent of $m$.

	Next, because of the continuous embedding $C^{2,\alpha}(\overline{\Omega}) \subset C^{2,\beta}(\overline{\Omega}) \subset C^0(\overline{\Omega})$ for any smooth bounded open set $\Omega$ and $\beta < \alpha$, we may observe that the following interpolation inequality is valid (see, for instance, \cite[Corollary 1.2.19 and Corollary 1.2.7]{Lun95}): let $\alpha \in (0,\,1)$. Then, for any $\beta \in (0,\,\alpha)$, there exist $\theta(\alpha,\beta) \in (0,\,1)$ and a constant $C=C(\dimension,\alpha,\beta)>0$ such that 
	\begin{equation*}
		\|u\|_{C^{2,\beta}} \leq C\, \|u\|_{C^{0}}^{\theta(\alpha,\beta)} \, \|u\|_{C^{2,\alpha}}^{1-\theta(\alpha,\beta)} 
	\end{equation*}
	for any $u \in C^{2,\alpha}$. From this interpolation inequality, we deduce that, for $\beta \in (0,\,\alpha)$, there exist constants $\theta(\alpha, \beta) \in (0,\,1)$ and $\widetilde{C}_3(\dimension,\alpha,\beta)>0$ such that  
	\begin{equation*}
		\|\phi_m\|_{C^{2,\beta}(\partial B_1(0))} \leq C_3(\dimension,\alpha,\beta) \|\phi_m\|_{L^{\infty}(\partial B_1(0))}^{\theta(\alpha,\beta)} \, \|\phi_m\|_{C^{2,\alpha}(\partial B_1(0))}^{1-\theta(\alpha, \beta)}
	\end{equation*}
	for $m \in (0,\,m_0)$. Therefore, from \eqref{representationMinimizersNearBall} and \eqref{representationMinimizerNearBall03}, we finally obtain that there exist constants $\widetilde{m}_0 = \widetilde{m}_0(\dimension,s,\potential)>0$ and $C_4(\dimension,s,\potential,\alpha)>0$ such that
	\begin{equation}
		\|\phi_m\|_{C^{2}(\partial B_1(0))} \leq C_4(\dimension,s,\potential,\alpha) \,  m^{\frac{\theta(\alpha)s^2}{2\dimension^2}}
	\end{equation}
	for any $m \in (0,\,\widetilde{m}_0)$. This implies that $\partial \EtildeVol$ is close to $\partial B_1(0)$ in $C^2$-sense and, in particular, $\EtildeVol$ is convex for sufficiently small $m>0$ because the second fundamental form of $\partial \EtildeVol$ pointwisely converges to that of $\partial B_1(0)$ which is positive definite.
\end{proof}

\begin{remark}\label{rem:Radial_Convex_Potential}
	In the proof of \cref{lemmaRegularityConvexityMinimizers}, we have observed the convexity of minimizers for small volumes, assuming that $\potential$ is locally Lipschitz, coercive, and $\inf_{\Rdim}\potential = \potential(0) = 0$. In addition, if we further assume that $\potential$ is convex and radially symmetric, that is, there exists a convex function $G: [0,\,\infty) \to \R$ such that $\potential(x) = G(|x|)$ for any $x \in \Rdim$, then we can show that the unique minimizer of $\EucEnergysVolScaledPotential$ with volume $|B_1|$ for $m>0$ is the ball up to negligible sets.

	Indeed, from the isoperimetric inequality of $\Ps$ and the symmetric rearrangement (see, for instance, \cite{LieLos01}), we have
	\begin{equation*}
		\Ps(E^*) \leq \Ps(E), \quad \int_{E^*}\volScaledPotential(x)\integralde x \leq \int_{E}\volScaledPotential(x)\integralde x
	\end{equation*}  
	for any measurable set $E \subset \Rdim$, where we denote by $E^*$ the open ball of radius equal to $|E|^{1/\dimension}|B_1|^{-1/\dimension}$ centred at the origin.
\end{remark}

\begin{corollary}\label{res:Lagrange_Multipl}
Under the assumptions of \cref{lemmaRegularityConvexityMinimizers}, for every $\volume>0$ the following Euler-Lagrange equation holds:
	\begin{equation}\label{eq:EulLag}
		H_{s,\partial E_m} + g = \lambda_m \quad \text{on $\partial E_m$},
	\end{equation}
	where $\lambda_m$ is a Lagrange multiplier associated to the volume constraint such that
	\begin{equation}\label{eq:Lag_Mult_Infinity}
	\lim\limits_{\volume\to 0}\lambda_\volume=+\infty.
	\end{equation}
\end{corollary}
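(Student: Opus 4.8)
The plan is to obtain \eqref{eq:EulLag} by a constrained first-variation argument and then to deduce \eqref{eq:Lag_Mult_Infinity} from the rescaled identity \eqref{equationEulerLagrangeScaledMinmizer}, which was already derived inside the proof of \cref{lemmaRegularityConvexityMinimizers}.

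For \eqref{eq:EulLag}: by \cref{lemmaImprovedRegularity} the boundary $\partial\EtildeVol$ is of class $C^{2,\beta}$ for every $\beta\in(0,s)$, hence so is $\partial\Evol=\length\,\partial\EtildeVol+\xvolume$; in particular $\partial\Evol$ is $C^{1,\alpha}$ with some $\alpha>s$, so the principal-value integral defining $\sMeanCurvature{\Evol}$ converges and is continuous along $\partial\Evol$. Given $T\in C^\infty_c(\Rdim;\Rdim)$ and the flow $\Phi_t(x)=x+tT(x)$, I would invoke the standard first-variation formulas for the two terms of $\EucEnergysPotential$ and for the volume (as used, e.g., in \cite{FFMMM15,CesNov17,BFV14}): $\frac{\de}{\de t}\big|_{t=0}\EucEnergysPotential(\Phi_t(\Evol))=\integral{\partial\Evol}(\sMeanCurvature{\Evol}+\potential)\,(T\cdot\nu_{\Evol})\integralde\HdimMinusOne$ and $\frac{\de}{\de t}\big|_{t=0}\abs{\Phi_t(\Evol)}=\integral{\partial\Evol}(T\cdot\nu_{\Evol})\integralde\HdimMinusOne$. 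Since $\Evol$ minimizes $\EucEnergysPotential$ under $\abs{\cdot}=\volume$, the first variation of $\EucEnergysPotential$ vanishes along every $T$ with $\integral{\partial\Evol}T\cdot\nu_{\Evol}\integralde\HdimMinusOne=0$; fixing some $T_0$ with $\integral{\partial\Evol}T_0\cdot\nu_{\Evol}\integralde\HdimMinusOne=1$ (a mollified extension of $\nu_{\Evol}$, using $\HdimMinusOne(\partial\Evol)>0$) and subtracting an appropriate multiple of $T_0$ from an arbitrary $T$, one gets a constant $\lambda_m\in\R$ with $\sMeanCurvature{\Evol}+\potential=\lambda_m$ on $\partial\Evol$, i.e.\ \eqref{eq:EulLag}. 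For $\volume$ outside the small-volume regime one uses instead the regularity of minimizers established in \cite{CesNov17} together with the bootstrap of \cite{BFV14}; the range relevant to \eqref{eq:Lag_Mult_Infinity} is $\volume$ small, which is covered by the lemmas above.

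For \eqref{eq:Lag_Mult_Infinity}: rescaling \eqref{eq:EulLag} through $\sMeanCurvature{\length E}(\length x)=\length^{-s}\sMeanCurvature{E}(x)$ and the definition \eqref{eq:Def_VolScaledPotential} of $\volScaledPotential$, one finds that $\EtildeVol$ solves $\sMeanCurvature{\EtildeVol}+\volScaledPotential=\widetilde\lambda_m$ on $\partial\EtildeVol$ with $\widetilde\lambda_m=\length^s\lambda_m$, where $\widetilde\lambda_m$ is exactly the Lagrange multiplier of \cref{lemmaImprovedRegularity}. Testing the volume-constrained Euler--Lagrange equation for $\EtildeVol$ against the radial field $x\mapsto x$ yields \eqref{equationEulerLagrangeScaledMinmizer}, that is $\dimension\abs{\Bone}\,\widetilde\lambda_m=(\dimension-s)\Ps(\EtildeVol)+\integral{\EtildeVol}\nabla\volScaledPotential(x)\cdot x\integralde x+\dimension\integral{\EtildeVol}\volScaledPotential(x)\integralde x$. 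By \cref{thm:Uniform_closeness_of_minimizers_to_ball} we have $\EtildeVol\subset\BallRadius{1+\rzero}$ with $\rzero\downarrow0$; since on $\BallRadius{3}$ one has $\abs{\volScaledPotential}\leq\length^s\sup_{\BallRadius{3+\Rzero}}\potential\to0$ and (a.e.) $\abs{\nabla\volScaledPotential(x)}=\length^{s+1}\abs{\nabla\potential(\length x+\xvolume)}\leq\length^{s+1}\Lip(\potential;\BallRadius{3+\Rzero})\to0$, using \ref{H:locallyLipschitz} and $\abs{\xvolume}\leq\Rzero$ from \cref{prop:properties_of_minimizers}, the last two integrals vanish as $\volume\downarrow0$, while $\Ps(\EtildeVol)\to\Ps(\Bone)$ by \eqref{eq:WulDefsEstimate} and the scale invariance of $\WulDefs$. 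Hence $\widetilde\lambda_m\to(\dimension-s)\Ps(\Bone)/(\dimension\abs{\Bone})>0$, and since $\lambda_m=\length^{-s}\widetilde\lambda_m$ with $\length=(\volume/\abs{\Bone})^{1/\dimension}\downarrow0$, we conclude $\lambda_m\to+\infty$.

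The step I expect to be the main obstacle is the rigorous first-variation computation at the available regularity: verifying that the principal-value operator $\sMeanCurvature{\Evol}$ genuinely represents the first variation of $\Ps$ and is continuous along $\partial\Evol$ (this is precisely where $\partial\Evol\in C^{1,\alpha}$ with $\alpha>s$ is used), and then passing cleanly from ``the first variation annihilates all test fields that are volume-preserving to first order'' to the existence of a single constant multiplier $\lambda_m$. Once \eqref{eq:EulLag} and \eqref{equationEulerLagrangeScaledMinmizer} are in hand, the divergence $\lambda_m\to+\infty$ is just a scaling bookkeeping built on \cref{thm:Uniform_closeness_of_minimizers_to_ball}.
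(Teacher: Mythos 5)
Your proof is correct and takes essentially the same approach as the paper: both derive the Euler--Lagrange equation from the regularity established earlier, and both extract the divergence $\lambda_m\to+\infty$ by testing against the dilation field $x\mapsto x$ to get an identity in which the dominant term is the fractional perimeter. The only cosmetic difference is that you carry out the bookkeeping in the rescaled variables $\EtildeVol,\widetilde\lambda_m$ and convert via $\lambda_m=\length^{-s}\widetilde\lambda_m$, whereas the paper writes the unrescaled identity $\dimension\volume\,\lambda_m=(\dimension-s)\Ps(\Evol)+\int_{\Evol}\nabla\potential\cdot x\,\de x+\dimension\int_{\Evol}\potential\,\de x$ directly and reads off the blow-up from $\Ps(\Evol)\geq\Ps(B^\volume)\sim\length^{\dimension-s}$.
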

\begin{proof}
 By making the same computation as in \eqref{equationEulerLagrangeScaledMinmizer}, we can show that the identity 
	\begin{equation}\label{equationLagrangeMultiplierOriginalProblem}
		N m \lambda_m = (N-s)\Ps(E_m) + \int_{E_m} \nabla g(x) \cdot x \,dx + N \int_{E_m} g(x) \,dx
	\end{equation}
	holds for $m>0$ and that $\lambda_m \to +\infty$ as $m \downarrow 0$.
\end{proof}

\begin{remark}[$H_s$-bubble problem]

	The classical $H$-bubble problem, which was originally raised by S.-T.Yau \cite[Problem 59]{Yau82No02}, is formulated as follows: given a function $H: \Rdim \to \R$, the question is to find an immersed hypersurface $\mathcal{M} \subset \Rdim$ such that its mean curvature at $p \in \mathcal{M}$ is equal to $H(p)$.
	\eqref{MinimizationNonlocalProblem} is related to a nonlocal version of this geometric problem.
	Using the same notation as in \cref{lemmaRegularityConvexityMinimizers} and \cref{res:Lagrange_Multipl}, we observe that if the function $(0,+\infty)\ni\volume\mapsto\lambda_m$ were continuous in a neighbourhood of zero, this would imply that it would be also surjective when restricted to a neighbourhood of zero (thanks to \eqref{eq:Lag_Mult_Infinity}). As a consequence, there would exist $\bar{\lambda}>0$ such that we would be able to solve a fractional version of the $H$-bubble problem, that is: we might find a hypersurface $\partial E$ satisfying the equation
	\begin{equation*}
	\sMeanCurvature{E} + \potential = \lambda \quad \text{on $\partial E$}, 
	\end{equation*}
	for every $\lambda>\bar{\lambda}$. 
	
%	As a sufficient condition for the continuity of the Lagrange multiplier $\lambda_m$, we remark that, if there exists $\bar{m}>0$ such that the set $E_m$ is the unique minimizer for Problem \eqref{MinimizationNonlocalProblem} for $m\in (0,\,\bar{m})$, then the Lagrange multiplier $\lambda_m$ is continuous with respect to $m\in (0,\,\bar{m})$.
	Regarding a sufficient condition for the continuity of the Lagrange multiplier $\lambda_m$, we remark that, if for $m_0>0$ sufficiently small a set $E_{m_0}$ with $|E_{m_0}|=m_0>0$ is the unique minimizer of $\capE_{s,\potential}$ among sets with volume $m_0$, then the Lagrange multiplier $\lambda_m$ is continuous in a neighbourhood of zero( with respect to $m>0$). 
	
	Indeed, to prove this we first take any sequence $\{m_i\}_{i} \subset (0,\,\infty)$ converging to a fixed $m_0 \in (0,\,\infty)$.	
	We observe, from the minimality of $E_{m_i}$, that $\sup_i\Ps(E_{m_i})<\infty$. Then, from the compactness of $P_s$, the uniqueness of $E_{m_0}$, and the coercivity of $g$, we deduce that $E_{m_i} \to E_{m_0}$ in $L^1$-sense. Thus, from the minimality of $E_{m_0}$, the uniform boundedness of $E_{m_i}$ and by using the dominated convergence theorem, we have
	\begin{equation}\label{continuityPotentialTerm}
		\int_{E_{m_i}}g(x) \,dx \xrightarrow[i \to \infty]{} \int_{E_{m_0}}g(x)\,dx.
	\end{equation}
	From the minimality of $E_{m_i}$, we can also obtain the continuity of $\Ps$ at $m_0$, that is,
	\begin{equation*}
		\Ps(E_{m_i}) \xrightarrow[i \to \infty]{} \Ps(E_{m_0}).
	\end{equation*}
	Indeed, from the minimality of $E_{m_i}$, setting $F_{m_i} \coloneqq \left(\frac{m_i}{m_0}\right)^{1/N} E_{m_0}$, we have
	\begin{align}
		E_{s,g}[m_i] &\coloneqq \inf\{\capE_{s,\potential}(E) \mid |E|=m_i\} \nonumber\\
		&\leq \capE_{s,\potential}(F_{m_i}) \nonumber\\
		&= \left( \frac{m_i}{m_0} \right)^{\frac{N-s}{N}}P_s(E_{m_0}) + \frac{m_i}{m_0} \int_{E_{m_0}} g\left( \left(\frac{m_i}{m_0}\right)^{\frac{1}{N}}x \right)\,dx \nonumber
	\end{align}
	for any $i \in \mathbb{N}$. Thus, from the minimality of $E_{m_0}$, we obtain that
	\begin{equation*}
		\limsup_{i \to \infty}E_{s,g}[m_i] \leq E_{s,g}[m_0].
	\end{equation*}
	In the same way, we can also obtain that $\liminf_{i\to\infty}E_{s,g}[m_i] \geq E_{s,g}[m_0]$. From this, we derive the continuity of $E_{s,g}[m]$ at $m_0$. Therefore, from the minimality of $E_{m_i}$ and $E_{m_0}$ and from \eqref{continuityPotentialTerm}, we obtain that $P_s$ is continuous at $m_0$. 
	
	Finally, from the assumption on $g$ and the uniform boundedness of $E_{m_i}$, we can also obtain that the potential term in \eqref{equationLagrangeMultiplierOriginalProblem} is continuous at $m_0$, that is, 
	\begin{equation*}
		\int_{E_{m_i}} \nabla g(x) \cdot x \,dx + N \int_{E_{m_i}} g(x) \,dx \xrightarrow[i \to \infty]{} \int_{E_{m_0}} \nabla g(x) \cdot x \,dx + N \int_{E_{m_0}} g(x) \,dx.
	\end{equation*} 
	In conclusion, from \eqref{equationLagrangeMultiplierOriginalProblem}, we obtain the continuity of $\lambda_m$.
\end{remark}

\bibliographystyle{plain}
\bibliography{biblio}

\end{document}